%% file: main.tex
\documentclass{article}
\usepackage[letterpaper,margin=1.0in]{geometry}

\usepackage{hyperref}
\usepackage{enumitem}
\usepackage[utf8]{inputenc} 
\usepackage[T1]{fontenc}    
\usepackage{hyperref}       
\usepackage{url}            
\usepackage{booktabs}       
\usepackage{amsfonts}       
\usepackage{nicefrac}       
\usepackage{microtype}      
\usepackage{graphicx}       
\usepackage[numbers,sort]{natbib}
\usepackage{multirow}
\usepackage{bbding}
\graphicspath{{media/}}     
\usepackage{color}
\usepackage{algorithmic,algorithm}
\usepackage{nicefrac}
\usepackage{cancel}
\usepackage{graphicx} 
\usepackage{booktabs}
\usepackage{makecell}
\usepackage{multirow}
\usepackage{amssymb,amsthm}
\input{math}

\usepackage{thmtools}
\usepackage{thm-restate}

\title{Solving Convex-Concave Problems with $\tilde{\mathcal{O}}(\epsilon^{-4/(3p+1)})$ $p$th-Order Oracle Complexity \textsuperscript{*}}

\author{Lesi Chen \textsuperscript{* 1} \qquad Xinliang Zhang \textsuperscript{* 1} \qquad Chengchang Liu \textsuperscript{2}\\
Junru Li \textsuperscript{1} \qquad Luo Luo \textsuperscript{3}
\qquad Jingzhao Zhang \textsuperscript{1} \\
\vspace{-2mm} \\
\normalsize{\textsuperscript{1}IIIS, Tsinghua University\quad 
\textsuperscript{2} The Chinese University of Hong Kong
}
\\ \normalsize{\textsuperscript{3} Fudan University}\\
   \vspace{-2mm} \\
\normalsize{ \texttt{ \{chenlc23,xinliang23\}@mails.tsinghua.edu.cn, 7liuchengchang@gmail.com,}} \\
\normalsize{\texttt{
jr-li24@mails.tsinghua.edu.cn,
luoluo@fudan.edu.cn,
jingzhaoz@mail.tsinghua.edu.cn
}}}

\begin{document}
\maketitle
\begingroup
\begin{NoHyper}
\renewcommand\thefootnote{*}
\footnotetext{The first two authors contribute equally. The conference version of this manuscript is published in COLT 2025 \citep{chen2025solving}, which contains the upper bound when $p=2$. This work shows our upper bound can be readily extended to $p$th-order methods, and provides a new lower bound for the problem.}
\end{NoHyper}
\endgroup
\begin{abstract}
When the objective has Lipschitz continuous $p$th-order derivatives, it is known that
convex-concave minimax problems can be solved with $\mathcal{O}(\epsilon^{-2/(p+1)})$ $p$th-order oracle calls. This complexity upper bound was speculated to be optimal as it is achieved by a natural generalization of the optimal first-order method.
In this work, we show an improved upper bound of
$\tilde{\mathcal{O}}(\epsilon^{-4/(3p+1)})$ 
by applying the Monteiro-Svaiter acceleration.
We also establish a lower complexity bound of $\Omega(\epsilon^{-2/(3p-1)})$, suggesting a gap still exists for $p \ge 2$.
\end{abstract}

\section{Introduction}
We study smooth convex-concave minimax optimization problems over convex compact sets $\gX \subset \sR^{d_x}$ and $\gY \subset \sR^{d_y}$:
\begin{equation} \label{prob:main}
\min_{\vx \in \gX} \max_{\vy \in \gY} f(\vx, \vy),
\end{equation}
where the objective $f$ has $L_p$-Lipschitz continuous $p$th-order derivatives.

This problem naturally arises from many applications, including finding a Nash equilibrium of a two-player zero-sum game~\citep{v1927hilbertschen,nash1950equilibrium,jordan2025adaptive}, solving the Lagrangian function in constrained optimization \citep{ouyang2021lower,scaman2017optimal}, and many machine learning problems such as adversarial training \citep{zhang2018mitigating}, AUC maximization~\citep{ying2016stochastic}, and distributionally robust optimization~\citep{carmon2022distributionally}. 

A fundamental result for Problem (\ref{prob:main}) is the minimax theorem, which was first proved for bilinear problems by \citet{v1927hilbertschen}, and extended to general convex-concave functions by \citet{sion1958general}. The minimax theorem guarantees the existence of a saddle point $(\vx^*,\vy^*)$ such that 
\begin{equation*}
    f(\vx^*,\vy^*) = \min_{\vx \in \gX} \max_{\vy \in \gY} f(\vx,\vy) = \max_{\vy \in \gY}  \min_{\vx \in \gX} f(\vx,\vy).
\end{equation*}

Given the existence of the saddle point, a natural question is: 
``What is the complexity of finding such a point?'' We study this question under the oracle model \citep{nemirovskij1983problem}. This model restricts algorithms to access oracles that contain local information about the objectives, where the $p$th-order oracle returns the tuple $(f(\vx,\vy), \nabla f(\vx,\vy), \cdots, \nabla^p f(\vx,\vy))$ that gives the function value and derivatives up to order $p$ at the query point $(\vx,\vy)$. 
The order $p$ naturally gives a division of different algorithm classes. 
For example, first-order methods purely use gradient information; second-order methods (also known as Newton-type methods) jointly use gradient and Hessian information; and tensor methods $(p \ge 2)$ leverage higher-order derivatives information.
A central problem in optimization theory research is to accurately characterize the gains provided by higher-order oracles~\citep{gasnikov2019near,nesterov2021inexact,nesterov2021superfast}.

The complexity of first-order methods for solving minimax optimization is well understood. 
In his seminal work,
\citet{korpelevich1976extragradient} proposed the celebrated extragradient (EG) method and proved it can find an $\epsilon$-solution to Problem (\ref{prob:main}) in $\gO\left( \nicefrac{L_1}{\epsilon} \right)$ iterations, where each iteration calls two first-order oracles.
\citet{popov1980modification} proposed the optimistic gradient (OG) method which has the same convergence guarantee but only requires a single first-order call every iteration. \citet{nemirovski2004prox, nesterov2007dual} further proposed methods for more general spaces known as mirrox-prox (MP) and dual extrapolation (DE), respectively. \citet{nemirovski2004prox} also proved a lower bound to show that the 
$\Omega( \nicefrac{L_1}{\epsilon} )$ complexity is the limit of all first-order algorithms, and recent works~\citep{zhang2022lower,ouyang2021lower} further refined the diameter dependency in the lower bound.



Compared with the theory of first-order methods, we have less knowledge of higher-order minimax optimization.
In their seminal work, \citet{monteiro2012iteration} proposed the
second-order generalization of EG, which can provably find an $\epsilon$-solution with  $\gO( (\nicefrac{L_2}{\epsilon})^{2/3})$ second-order calls.
Recently, 
\citet{bullins2022higher} generalized this method to non-Euclidean setup and to $p$th order, achieving the $\gO( (\nicefrac{L_p}{\epsilon})^{2/(p+1)})$ upper complexity bound of $p$th-order oracle calls.
\citet{huang2022approximation,adil2022optimal} simplified the proof of the $p$th-order EG by viewing the $p$th-order tensor step as an oracle.
\citet{lin2023monotone} provided alternative analyses from the
control-theoretic perspective.
Besides the high-order EG algorithm, numerous other algorithms have also been proposed, including high-order OG \citep{jiang2022generalized,jiang2024adaptive}, high-order DE \citep{lin2022perseus}, and high-order reduced-gradient methods \citep{nesterov2023high}.
All of these methods have the same $\gO((\nicefrac{L_p}{\epsilon}) ^{2/(p+1)})$ upper complexity bound of $p$th-order oracle calls, which is speculated to be optimal in \citep{adil2022optimal,lin2022perseus}.

The main contribution of this paper is to break the commonly conjectured 
$\Omega((\nicefrac{L_p}{\epsilon})^{2/(p+1)})$ barrier by proving a better upper complexity bound of $\tilde \gO( (\nicefrac{L_p}{\epsilon})^{4/(3p+1)})$ for objectives with $L_p$-Lipschitz continuous $p$th-order derivatives. Our new result is achieved by applying the high-order momentum acceleration technique \citep{monteiro2013accelerated} to both variables $\vx$ and $\vy$, and reducing solving the original Problem~(\ref{prob:main}) to solving $\tilde \gO\left((\nicefrac{\gamma}{\epsilon})^{4/(3p+1)} \right)$ proximal minimax subproblems 
\begin{equation*}
    \min_{\vx \in \gX} \max_{\vy \in \gY} f(\vx,\vy) + \frac{\gamma}{p+1} \Vert \vx - \bar \vx \Vert^{p+1} - \frac{\gamma}{p+1} \Vert \vy - \bar \vy \Vert^{p+1}, 
\end{equation*}
where $(\bar \vx, \bar \vy)$ is the proximal center. If we take $\gamma = L_p$, the above subproblem has a constant condition number for $p$th-order methods. Then an  $\tilde \gO\left( (\nicefrac{L_p}{\epsilon})^{4/(3p+1)} \right)$ upper complexity bound can easily be achieved by applying any existing convergent $p$th-order method to solve the above subproblems.

Finally, we complement our upper bound by proving a lower complexity bound showing that any $p$th-order algorithm can not find an $\epsilon$-solution in less than $\Omega( (\nicefrac{L_p}{\epsilon})^{2/(3p-1)} )$ oracle calls. Our lower bound is the first non-trivial lower bound for high-order minimax optimization under general conditions:
Existing lower bounds either require the diameter to scale with $\nicefrac{1}{\epsilon}$ \citep{adil2022optimal,lin2022perseus}, or is subsumed by the  $\Omega( (\nicefrac{1}{\epsilon})^{2/(3p+1)})$ bound for standard minimization problems~\citep{arjevani2019oracle} (see the discussion in Section~\ref{sec:diss-lower}).
Our lower bound matches the upper bound when $p=1$, but a gap still exists when $p \ge 2$. 
We hope our results can be an initial step for future works to further fill the gap.

\paragraph{Additional Related Works} Apart from the aforementioned related works in minimax optimization, we also review closely relevant studies on high-order convex optimization. In a seminal work, \citet{nesterov2006cubic} proposed the Cubic Regularized Newton (CRN) method with the $\gO( (\nicefrac{L_2}{\epsilon})^{1/2})$ second-order oracle complexity, which initiated a series of subsequent works analyzing the complexity of second- and higher-order algorithms.
\citet{nesterov2008accelerating} proposed the accelerated CRN (ACRN) method that achieves the improved 
$\gO( (\nicefrac{L_2}{\epsilon})^{1/3})$ complexity. 
\citet{monteiro2013accelerated} introduced a novel and intricate acceleration framework, under which they proposed a method with $\gO( (\nicefrac{L_2}{\epsilon})^{2/7} \log (\nicefrac{1}{\epsilon}))$ second-order oracle complexity. 
\citet{nesterov2021implementable}
gave a second-order implementation of third-order tensor step and 
generalized the ACRN method to $p$th order, achieving the $\gO( (\nicefrac{L_p}{\epsilon})^{1/(p+1)} )$ upper bound.
Later, three independent groups \citep{jiang2019optimal,bubeck2019near,gasnikov2019optimal} generalized the Monteiro-Svaiter accelerated method to $p$th order and achieved the $\gO( (\nicefrac{L_p}{\epsilon})^{2/(3p+1)} \log (\nicefrac{1}{\epsilon}))$ upper bound, which matches the $\Omega((\nicefrac{L_p}{\epsilon})^{2/(3p+1)})$ lower bound established by \citet{arjevani2019oracle} up to only logarithmic factors. Recently, \citet{carmon2022optimal,kovalev2022first} independently proposed different novel enhancements to remove the $\log (\nicefrac{1}{\epsilon})$ factor caused by the line search subroutine in the Monteiro-Svaiter acceleration, achieving the optimal $\gO((\nicefrac{L_p}{\epsilon})^{2/(3p+1)})$ oracle complexity.
\citet{carmon2020acceleration} analyzed both the upper and lower bounds for the limiting behavior of the $p$th-order Monteiro-Svaiter acceleration when $p \rightarrow \infty$, which can be used to improve the complexity of many related optimization problems \citep{carmon2021thinking,carmon2024whole,carmon2022distributionally}.

\paragraph{Notations} We use $\Vert \, \cdot \, \Vert$ to denote the Euclidean norm for vectors and the spectral norm for matrices. We hide logarithmic factors in the notation $\tilde \gO(\,\cdot\,)$.
For a set $\gZ \subseteq \sR^d$, we let $\gN_\gZ(\vz) = \left\{\vv \in \sR^d: \langle \vv, \vz' - \vz \rangle \le 0, \forall \vz' \in \gZ \right\}$ be the normal cone of $\gZ$ at point $\vz$. We also let ${\rm Proj}_{\gZ}(\bar \vz ) = \arg \min_{\vz \in \gZ} \Vert \vz - \bar \vz \Vert$ be the projection operator of $\vz$ onto the set $\gZ$. 
To simplify notations, we let $\vz = (\vx,\vy)$ and define the operator on $\vz$ as 
$\mF = (\nabla_x f, - \nabla_y f)^\top$. Let  $\mT$ and $\mM$ be two order $k$ tensors, their inner product is defined as $\langle \mT, \mM \rangle = \sum_{i_1,\cdots,i_k} \mT_{i_1,\cdots,i_k} \mM_{i_1,\cdots,i_k} $. 
Notation $\otimes$ denotes the Kronecker product. For a $k$ order tensor $\mT$, we define its spectral norm as 
\begin{equation*}
    \Vert \mT \Vert = \sup_{\vv_1,\cdots,\vv_k} \left\{ \langle \mT, \vv_1 \otimes \cdots \otimes \vv_k \rangle   : \Vert \vv_i \Vert \le 1, 1 \le i \le k \right\}.
\end{equation*}

\section{Preliminaries} \label{sec:pre}
In this section, we describe the formal problem setup. We begin with assumptions made in our analyses.
First, we assume that the domain is bounded and that the function is convex-concave.
\begin{asm}[Bounded domain] \label{asm:D}
    We assume that both the sets $\gX$ and $\gY$ are convex and compact with diameters $D_\gX:= \sup_{\vx, \vx' \in \gX} \Vert \vx - \vx' \Vert < + \infty$ and  $D_\gY:= \sup_{\vy, \vy' \in \gY} \Vert \vy - \vy' \Vert < + \infty$.
    We also define $\gZ = \gX \times \gY$ and $D_\gZ := \sup_{\vz, \vz' \in \gZ} \Vert \vz - \vz' \Vert$.
\end{asm}
\begin{asm}[Convex-concave functions] \label{asm:CC}
    We assume that $f(\,\cdot\, \vy)$ is convex for any fixed $\vy$ and $f(\vx,\,\cdot\,)$ is concave for any fixed $\vx$.
\end{asm}
Recall that $\vz= (\vx,\vy)$ and that $\mF = (\nabla_x f, -\nabla_y f)^\top$. The convex-concavity assumption is also equivalent to assuming the monotonicity of the operator $\mF$:
\begin{equation*}
    \langle \mF(\vz) - \mF(\vz'), \vz - \vz' \rangle \ge 0, \quad \forall \vz, \vz' \in \gZ.
\end{equation*}
Third, we assume that the function has the following smoothness properties. The first one is standard for analyses of first-order algorithms. The last one provides a guarantee for higher-order methods.
\begin{asm}[Gradient Lipschitzness] \label{asm:grad-lip}
    We suppose the gradient of $f(\vx,\vy)$ is $L_1$-Lipschitz continuous for some $L_1>0$:
    \begin{equation*}
        \Vert \nabla f(\vx,\vy) - \nabla f(\vx',\vy') \Vert \le L_1 \Vert \vx - \vy \Vert ,\quad \forall \vx,\vx' \in \gX, ~\vy,\vy' \in \gY.
    \end{equation*}
\end{asm}
\begin{asm}[$p$th-order Derivative Lipschitzness] \label{asm:Hess-lip}
    We suppose $\nabla^p f(\vx,\vy)$ is $L_p$-Lipschitz continuous for some $L_p >0$:
    \begin{equation*}
        \Vert \nabla^p f(\vx,\vy) - \nabla^p f(\vx',\vy') \Vert \le L_p \Vert \vx - \vy \Vert ,\quad \forall \vx,\vx' \in \gX, ~\vy,\vy' \in \gY.
    \end{equation*}
\end{asm}

We notice that previous $p$th-order EG method \citep{monteiro2012iteration,bullins2022higher} only require Assumption \ref{asm:CC} and \ref{asm:Hess-lip}, but unnecessarily require Assumption~\ref{asm:D} and \ref{asm:grad-lip}. In the following, we discuss that our two additional assumptions only incur mild complexity cost.

\begin{remark}[Discussion on Assumption \ref{asm:D}]
Although the $p$th-order EG also works under the unconstrained setting, the convergence rates will depend on $\Vert \vz_0 - \vz^* \Vert$. When we know $\bar D_\gZ$ is an upper bound for $\Vert \vz_0 - \vz^* \Vert$, we can simply run our algorithm on the constraint set $ \{ \vz \in \sR^{d} : \Vert \vz - \vz_0 \Vert \le \bar D_\gZ \}$, where $d = d_x+d_y$. When we underestimate $\bar D_\gZ$, we can run the algorithm with $\bar D_\gZ$ doubled each time until we find an $\epsilon$-solution, which only causes an additional $\log( \Vert \vz_0 - \vz^* \Vert / \bar D_\gZ )$ factor in the total complexity. 
\end{remark}

\begin{remark}[Discussion on Assumption \ref{asm:grad-lip}]
Although the additional Assumption \ref{asm:grad-lip} makes our algorithm slightly more restricted than the accelerated algorithms such as $p$th-order EG, the upper complexity bound of our algorithm will only have a mild logarithmic dependency on~$L_1$.  Moreover, Assumption \ref{asm:grad-lip} can be derived from Assumption \ref{asm:Hess-lip} when the domains $\gX,\gY$ are compact (Assumption~\ref{asm:D}). 
\end{remark}


\subsection{High-Order Oracle} 
We then introduce the oracle for measuring complexity. Following \citet{nemirovskij1983problem}, we restrict the scope of this paper to algorithms that use the $p$th-order oracles defined below. 
\begin{dfn}[$p$th-order oracle]
Given a query point $(\vx,\vy) \in \gX \times \gY$, the $p$th-order oracle returns the following derivative information
\begin{equation*}
    \nabla^{(0,\cdots,p)} f(\vx,\vy):= (f(\vx,\vy), \nabla f(\vx,\vy), \cdots, \nabla^p f(\vx,\vy)).
\end{equation*}
\end{dfn}
To leverage this $p$th-order oracle, a common way is to take a tensor step
\citep{gasnikov2019near,bullins2022higher} that is formally defined as follows. 
In this work, we only consider the case that $p$ is a constant and leave the study of limiting behavior when $p \rightarrow \infty$ \citep{carmon2021thinking} as future work.
\begin{dfn}[$p$th-order tensor step] \label{dfn:tensor-step}
For an input point $\bar \vz \in \gZ$ and a monotone operator $\mF : \sR^d \rightarrow \sR^d$ the $p$th-order tensor step outputs $\vz = \gT_p(\bar \vz; \mF, M)$ such that
\begin{equation} \label{eq:Tensor-step}
    \left \langle \bar \mF_{p-1}(\vz; \bar \vz) + \frac{M}{p!} \Vert \vz - \bar \vz \Vert^{p-1} (\vz - \bar \vz)  , \vz' - \vz \right \rangle \ge 0, \quad \forall \vz' \in \gZ,
\end{equation}
where $\bar \mF_{p-1}(\vz; \bar \vz): = \sum_{k=0}^{p-1} \frac{1}{i!} \nabla^k \mF(\bar \vz) [\vz- \bar \vz]^k$ is the $(p-1)$th-order Taylor expansion of $\mF$.
\end{dfn}

The algorithm proposed in this work jointly uses the above $p$th-order tensor step with $\mF = (\nabla_x f, - \nabla_y f)^\top$, $\mF = \nabla_x f$, and $\mF = -\nabla_y f$ in different iterates. 
When $p=1$, the above tensor step is exactly the (projected) gradient step; When $p=2$, it corresponds to the cubic regularized Newton step \citep{nesterov2006cubic} and
can be efficiently solved via binary search \citep{monteiro2012iteration}; In general ($p \ge 2$), it corresponds to solving a strongly monotone variational inequality when setting $M >L_p$, which can be solved by the interior point method \citep{ralph2000superlinear,qi2002smoothing} or the cutting plane method \citep{jiang2020improved}. 

\subsection{Optimality Criterion}
A natural goal for the optimality of a solution $z$ in Problem (\ref{prob:main}) is the duality gap \citep{nesterov2007dual}:
\begin{equation} \label{dfn:duality-gap}
    {\rm Gap}_f(\vx, \vy): = \max_{\vy' \in \gY} f(\vx, \vy') - \min_{\vx' \in \gX} f(\vx', \vy).
\end{equation}
In this work, we instead adopt a criterion that is stronger than the 
widely used duality gap, defined as follows. 
\begin{dfn}[{\citet{cai2022finite}}] \label{dfn:eps-sol} 
We say $\vz \in \gZ $ is an $\epsilon$-solution to Problem (\ref{prob:main}) if 
\begin{equation*}
   r_\mF^{\rm tan}(\vz) := \min_{\vc \in \gN_\gZ(\vz)} \Vert \mF(\vz) + \vc \Vert \le \epsilon, \quad {\rm where} ~~ \mF = (\nabla_x f, - \nabla_y f)^\top.
\end{equation*}
\end{dfn}
The above criterion is known as the tangent residual, which is introduced by \citet{cai2022finite} to generalize the gradient norm from unconstrained to constrained settings.
It is lower-bounded by the duality gap, as formally stated in the following lemma.
\begin{lemma} \label{lem:solu-concept}
Under Assumption \ref{asm:CC}, we have that
\begin{equation*}
    {\rm Gap}_f (\vx,\vy) 
    \le D_\gZ \min_{\vc \in \gN_{\gZ}} \Vert \mF(\vz) + \vc \Vert, \quad \forall \vz = (\vx,\vy) \in \gX \times \gY.
\end{equation*}
\end{lemma}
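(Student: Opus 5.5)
The plan is to bound the duality gap by the linearization given by convexity--concavity, and then absorb the normal-cone term using its defining inequality. Fix $\vz = (\vx,\vy) \in \gZ$ and an arbitrary $\vc \in \gN_\gZ(\vz)$. Let $\vx' \in \gX$ and $\vy' \in \gY$ be arbitrary, and set $\vz' = (\vx',\vy')$. By convexity of $f(\cdot,\vy)$ we have $f(\vx,\vy) - f(\vx',\vy) \le \langle \nabla_x f(\vx,\vy), \vx - \vx'\rangle$. By concavity of $f(\vx,\cdot)$ we have $f(\vx,\vy') - f(\vx,\vy) \le \langle \nabla_y f(\vx,\vy), \vy' - \vy\rangle$. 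Adding the two gives
\begin{equation*}
f(\vx,\vy') - f(\vx',\vy) \le \langle \mF(\vz), \vz - \vz'\rangle .
\end{equation*}
Both terms on the left are controlled by the same point $\vz$, so taking the maximum over $\vy'$ and the minimum over $\vx'$ (attained by compactness) yields ${\rm Gap}_f(\vx,\vy) \le \sup_{\vz' \in \gZ} \langle \mF(\vz), \vz - \vz' \rangle$.

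Next we insert the normal-cone vector. By definition of $\gN_\gZ(\vz)$ we have $\langle \vc, \vz' - \vz\rangle \le 0$, that is, $\langle \vc, \vz - \vz'\rangle \ge 0$. Hence
\begin{equation*}
\langle \mF(\vz), \vz - \vz'\rangle \le \langle \mF(\vz) + \vc, \vz - \vz'\rangle \le \Vert \mF(\vz) + \vc\Vert \, \Vert \vz - \vz'\Vert \le D_\gZ \Vert \mF(\vz)+\vc\Vert ,
\end{equation*}
where the last two inequalities are Cauchy--Schwarz and Assumption~\ref{asm:D}. This bound holds for every $\vz'$ and every $\vc \in \gN_\gZ(\vz)$. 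Taking the supremum over $\vz'$ and then the infimum over $\vc$ gives the claim, with the normal cone in the statement understood as $\gN_\gZ(\vz)$. The minimum over $\vc$ is attained because $\gN_\gZ(\vz)$ is a closed convex cone and the norm is coercive.

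There is no real obstacle here. The only points needing care are the sign conventions and the use of $D_\gZ$. For the signs, the $y$-component of $\mF$ is $-\nabla_y f$, so the concavity term pairs correctly with $\vy - \vy'$. For the diameter, joint maximization over $(\vx',\vy')$ is bounded using the diameter of the product set $\gZ$, not $D_\gX + D_\gY$. Differentiability of $f$ on $\gZ$ is implicit, since $\nabla f$ is assumed to exist in Assumption~\ref{asm:grad-lip}.
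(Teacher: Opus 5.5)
Your proposal is correct and follows essentially the same route as the paper's proof. Both arguments bound the gap by $\max_{\vz'}\langle \mF(\vz), \vz-\vz'\rangle$ via convexity--concavity, then add a normal-cone vector $\vc\in\gN_\gZ(\vz)$ (which only increases the inner product), and finish with Cauchy--Schwarz and the diameter $D_\gZ$. Your version also evaluates $\mF$ at $\vz$ throughout, which is the correct reading of the paper's display where $\mF(\vz')$ appears.
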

\begin{proof}
It follows from Cauchy-Schwarz inequality and the convex-concavity of function $f(\vx,\vy)$: For $\vc_m = \arg \min_{\vc  \in \gN_{\gZ}} \Vert \mF(\vz) + \vc \Vert$, we have that
\begin{align*}
    & {\rm Gap}_f (\vx,\vy)  = \max_{\vy' \in \gY} f(\vx,\vy') - \min_{\vx' \in \gX} f(\vx', \vy) \\
\le& \max_{\vy' \in \gY} \langle \nabla_y f(\vx, \vy), \vy' - \vy \rangle + \min_{\vx' \in \gX} \langle \nabla_x f(\vx,\vy), \vx- \vx' \rangle \\
=& \max_{\vz' \in \gZ} \langle \mF(\vz'), \vz - \vz' \rangle \le \langle \mF(\vz) + \vc_m, \vz - \vz' \rangle  \\
\le& D_\gZ \Vert \mF(\vz) + \vc \Vert.
\end{align*}    
\end{proof}
To find a point with a small tangent residual, we exploit smoothness and bound the tangent residual by the distance to the optimal solution in the following.
\begin{lemma} \label{lem:gd-make-g-small}
Let the operator $\mF: \sR^d \rightarrow \sR^d$ be monotone and $L$-Lipschitz continuous. For any $\vz^* \in \sR^d$ such that $r_\mF^{\rm tan}(\vz^*) = 0$, one gradient step $\vz = {\rm Proj}_\gZ (\bar \vz - \eta \mF(\bar \vz))$ ensures
\begin{equation*}
    r_{\mF}^{\rm tan}(\vz) \le \left \Vert \frac{\bar \vz - \vz}{\eta} + \mF(\vz) - \mF(\bar z)   \right \Vert \le 
    \left( \frac{1}{\eta} +L  \right) ( 2 + \eta L )
    \Vert \bar \vz - \vz^* \Vert.
\end{equation*}
Consequently, setting $\eta = 1/L$ yields $r_{\mF}^{\rm tan}(\vz)  \le 6 L \Vert \bar \vz - \vz^* \Vert$. 
\end{lemma}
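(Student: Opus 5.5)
The proof has two parts: an identity that exhibits an explicit normal-cone element, and a nonexpansiveness argument that bounds the quantities it produces.

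\textbf{Step 1: first inequality.} The projection optimality condition for $\vz = {\rm Proj}_\gZ(\bar \vz - \eta \mF(\bar \vz))$ says
\begin{equation*}
\bar \vz - \eta \mF(\bar \vz) - \vz \in \gN_\gZ(\vz).
\end{equation*}
Since $\gN_\gZ(\vz)$ is a cone, $\vc := \frac{\bar \vz - \vz}{\eta} - \mF(\bar \vz) \in \gN_\gZ(\vz)$. Plugging this $\vc$ into the definition of $r^{\rm tan}_\mF(\vz)$ gives
\begin{equation*}
r^{\rm tan}_\mF(\vz) \le \Vert \mF(\vz) + \vc \Vert = \left\Vert \frac{\bar \vz - \vz}{\eta} + \mF(\vz) - \mF(\bar \vz) \right\Vert.
\end{equation*}

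\textbf{Step 2: reduce to one distance.} By the triangle inequality and $L$-Lipschitzness, the right-hand side is at most $\left(\frac{1}{\eta} + L\right)\Vert \bar \vz - \vz \Vert$. It then suffices to show
\begin{equation*}
\Vert \bar \vz - \vz \Vert \le (2 + \eta L)\Vert \bar \vz - \vz^* \Vert.
\end{equation*}

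\textbf{Step 3: fixed-point property of $\vz^*$.} We read $r^{\rm tan}_\mF(\vz^*) = 0$ as meaning $\vz^* \in \gZ$ (so that $\gN_\gZ(\vz^*)$ is the normal cone of $\gZ$ at $\vz^*$) and $-\mF(\vz^*) \in \gN_\gZ(\vz^*)$. Hence $-\eta \mF(\vz^*) \in \gN_\gZ(\vz^*)$, which is exactly the condition $\vz^* = {\rm Proj}_\gZ(\vz^* - \eta \mF(\vz^*))$.

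\textbf{Step 4: nonexpansiveness.} Because the projection is nonexpansive,
\begin{align*}
\Vert \vz - \vz^* \Vert &\le \Vert \bar \vz - \eta \mF(\bar \vz) - \vz^* + \eta \mF(\vz^*) \Vert \\
&\le (1 + \eta L)\Vert \bar \vz - \vz^* \Vert .
\end{align*}
The triangle inequality then gives
\begin{equation*}
\Vert \bar \vz - \vz \Vert \le \Vert \bar \vz - \vz^* \Vert + \Vert \vz - \vz^* \Vert \le (2 + \eta L)\Vert \bar \vz - \vz^* \Vert,
\end{equation*}
which is the bound required in Step 2. Monotonicity is not needed for this crude bound; it would only sharpen the factor $1 + \eta L$ to $1$.

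\textbf{Step 5: conclusion.} Substituting $\eta = 1/L$ gives $(L + L)(2 + 1) = 6L$, so $r^{\rm tan}_\mF(\vz) \le 6L \Vert \bar \vz - \vz^* \Vert$.

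The only delicate point is Step 3, because the statement writes $\vz^* \in \sR^d$. One must interpret $r^{\rm tan}_\mF(\vz^*) = 0$ as implying $\vz^* \in \gZ$ and that $\vz^*$ is a fixed point of the projected gradient map.
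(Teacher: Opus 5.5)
Your proof is correct and is essentially the paper's argument. It uses the same normal-cone element $\frac{\bar \vz - \vz}{\eta} - \mF(\bar \vz)$ from the projection optimality condition, then nonexpansiveness of ${\rm Proj}_\gZ$ against the fixed point $\vz^* = {\rm Proj}_\gZ(\vz^* - \eta \mF(\vz^*))$ plus the triangle inequality; your reading that $\vz^* \in \gZ$ is exactly what the paper uses implicitly.

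One side remark in your Step 4 is wrong, though it does not affect the proof. For a merely monotone $L$-Lipschitz $\mF$, the map $\vz \mapsto \vz - \eta \mF(\vz)$ is only $\sqrt{1+\eta^2L^2}$-Lipschitz, and a skew-symmetric linear $\mF$ attains this. So improving the factor to $1$ would need cocoercivity, not monotonicity.
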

\begin{proof}
From the update $\vz = {\rm Proj}_\gZ (\bar \vz - \eta \mF(\bar \vz))$ we know that
\begin{equation*}
    \frac{\bar \vz - \eta \mF(\bar \vz) - \vz
    }{\eta} = \frac{\bar \vz - \vz}{\eta} - \mF(\bar \vz) \in \gN_\gZ(\vz).
\end{equation*}
Therefore, we can upper-bound the tangent residual as follows.
\begin{align*}
    r_{\mF}^{\rm tan}(\vz) &\le  \left \Vert  (\bar \vz - \vz)/\eta + \mF(\vz) - \mF(\bar z)   \right \Vert \le \left( 1/\eta +L  \right) \Vert \vz - \bar \vz \Vert \\
    &=  \left( 1/\eta +L  \right) \left \Vert {\rm Proj}_\gZ ( \bar \vz - \eta \mF(\bar \vz)) - \bar \vz \right \Vert \\
    &= \left( 1/\eta +L  \right) \left \Vert {\rm Proj}_\gZ ( \bar \vz - \eta \mF(\bar \vz)) - {\rm Proj}_\gZ ( \vz^* - \eta \mF(\vz^*)) + \vz^* - \bar \vz \right \Vert \\
    &\le  \left( 1/\eta +L  \right) ( 2 + \eta L ) \Vert \bar \vz -\vz^* \Vert,
\end{align*}
where the last line uses the non-expansiveness of the projection operator, the $L$-Lipschitz continuity of operator $\mF$, and the triangle inequality.  
\end{proof}

\subsection{Uniform Convexity} Finally, we introduce uniform convexity, which defines a natural function class that $(p-1)$th-order tensor methods enjoy a linear convergence rate \citep{gasnikov2019optimal}. This function class provides an important tool in our analyses.



\begin{dfn} \label{dfn:UC}
    A function $h(\vz): \gZ \rightarrow \sR$ is 
    $p$th-order 
    $\mu$-uniformly convex for some $\mu>0$ if 
    \begin{equation*}
        h(\vz) \ge h(\vz') + \langle \nabla h(\vz'), \vz - \vz' \rangle + \frac{\mu}{p} \Vert \vz - \vz' \Vert^p, \quad \forall \vz, \vz' \in \gZ.
    \end{equation*}
We say $h(\vz)$ is $p$th-order  $\mu$-uniformly concave if $-h(\vz)$ is $p$th-order  $\mu$-uniformly convex.
\end{dfn}
In the following, we recall some properties of uniformly convex functions, which are also useful to derive the main results of this paper.

\begin{lemma}[Section 4.2.2, \citet{nesterov2018lectures}] \label{lem:UC-grad-dominant}
Let $h(\vz): \gZ \rightarrow \sR$ be $p$th-order $\mu$-uniformly convex. Then for any $\vz \in \gZ$ and $\vz^* = \arg \min_{\vz \in \gZ} h(\vz)$, we have that 
\begin{equation*}
        \frac{p-1}{p} \left( \frac{1}{\mu} \right)^{\frac{1}{p-1}} \Vert \nabla h(\vz) \Vert^{\frac{p}{p-1}} \ge h(\vz) - h(\vz^*) \ge \frac{\mu}{p} \Vert \vz - \vz^* \Vert^p, \quad \forall \vz \in \gZ.
\end{equation*}
\end{lemma}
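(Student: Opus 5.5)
Both inequalities should follow from applying Definition~\ref{dfn:UC} once each, at two different base points, together with the first-order optimality of $\vz^*$ over $\gZ$. Because we only evaluate the definition at pairs of points in $\gZ$ ($\vz$ and $\vz^*$), the constraint set causes no trouble. Throughout we take $p \ge 2$, so that $p/(p-1)$ and $1/(p-1)$ are defined.

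\emph{Right inequality.} Apply Definition~\ref{dfn:UC} with base point $\vz' = \vz^*$ and evaluation point $\vz$:
\begin{equation*}
h(\vz) \ge h(\vz^*) + \langle \nabla h(\vz^*), \vz - \vz^* \rangle + \frac{\mu}{p}\Vert \vz - \vz^*\Vert^p .
\end{equation*}
Since $\vz^*$ minimizes the differentiable convex function $h$ over the convex set $\gZ$, the first-order optimality condition gives $\langle \nabla h(\vz^*), \vz - \vz^* \rangle \ge 0$ for every $\vz \in \gZ$. Dropping this nonnegative term yields $h(\vz) - h(\vz^*) \ge \frac{\mu}{p}\Vert \vz - \vz^*\Vert^p$.

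\emph{Left inequality.} Now apply Definition~\ref{dfn:UC} with base point $\vz' = \vz$ and evaluation point $\vz^* \in \gZ$:
\begin{equation*}
h(\vz^*) \ge h(\vz) + \langle \nabla h(\vz), \vz^* - \vz \rangle + \frac{\mu}{p}\Vert \vz^* - \vz\Vert^p .
\end{equation*}
Write $g = \Vert \nabla h(\vz)\Vert$ and $r = \Vert \vz^* - \vz\Vert$. By Cauchy--Schwarz the right-hand side is at least $h(\vz) - g r + \frac{\mu}{p} r^p$. We then lower-bound this over all $r \ge 0$, i.e.\ we compute the minimum of the one-dimensional function $\phi(r) = -g r + \frac{\mu}{p} r^p$.

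Because $p \ge 2$, $\phi$ is convex on $r \ge 0$, so its minimizer is the stationary point $r = (g/\mu)^{1/(p-1)}$. The minimum value is
\begin{equation*}
\phi_{\min} = -g^{\frac{p}{p-1}}\mu^{-\frac{1}{p-1}} + \frac{1}{p}\, g^{\frac{p}{p-1}}\mu^{-\frac{1}{p-1}} = -\frac{p-1}{p}\left(\frac{1}{\mu}\right)^{\frac{1}{p-1}} g^{\frac{p}{p-1}}.
\end{equation*}
Therefore $h(\vz^*) \ge h(\vz) + \phi_{\min}$, and rearranging gives exactly the left inequality.

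There is no real obstacle here; the only step that needs care is the elementary minimization of $\phi$. The one point worth noting is that the left inequality does not require $\nabla h(\vz^*) = 0$. It only uses the definition at the pair $(\vz, \vz^*)$, so it remains valid in the constrained setting with the plain gradient norm $\Vert \nabla h(\vz) \Vert$.
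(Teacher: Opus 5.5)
Your proof is correct and follows essentially the same argument as the source the paper cites (the paper gives no proof of its own, deferring to Nesterov, Section 4.2.2). The lower bound comes from the uniform-convexity inequality at base point $\vz^*$ plus first-order optimality $\langle \nabla h(\vz^*), \vz - \vz^* \rangle \ge 0$, and the upper bound comes from the inequality at base point $\vz$ evaluated at $\vz^*$, followed by minimizing $-g r + \frac{\mu}{p} r^p$ over $r \ge 0$; your remark that this upper bound needs no stationarity at $\vz^*$, and so holds as stated on a constrained convex $\gZ$, is also correct.
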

A typical example of uniformly convex functions is the power function
$d_p(\vz) = \Vert \vz \Vert^p / p$, whose properties are listed in the following lemma.
\begin{lemma}\label{lem:cubic-func}
The function $d_p(\vz) =  \Vert \vz \Vert^p / p$ with $p \in \sN_+$ satisfies the following:
\begin{enumerate}
    \item It is $p$th-order $\left(1/2 \right)^{p-2}$-uniformly convex;
    \item It has $(p-1)!$-Lipschitz continuous $p$th-order derivatives.
\end{enumerate}
\end{lemma}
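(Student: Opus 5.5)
Both properties follow from standard facts about the power function; the plan treats them separately.

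\emph{Uniform convexity.} The first step is the classical inequality (Nesterov, Lemma 4.2.3)
\begin{equation*}
\langle \Vert \vz\Vert^{p-2}\vz - \Vert \vz'\Vert^{p-2}\vz', \vz - \vz'\rangle \ge \left(\tfrac12\right)^{p-2}\Vert \vz - \vz'\Vert^{p}, \qquad p \ge 2 .
\end{equation*}
Note that $\nabla d_p(\vz) = \Vert \vz\Vert^{p-2}\vz$. To prove this inequality, expand the inner product and write it as
\begin{equation*}
\tfrac12(\Vert \vz\Vert^{p-2}+\Vert \vz'\Vert^{p-2})\Vert \vz-\vz'\Vert^2 + \tfrac12(\Vert \vz\Vert^{p-2}-\Vert \vz'\Vert^{p-2})(\Vert \vz\Vert^2-\Vert \vz'\Vert^2).
\end{equation*}
The second term is nonnegative, since both factors have the same sign. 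For the first term, use the triangle inequality $\Vert \vz-\vz'\Vert \le \Vert \vz\Vert+\Vert \vz'\Vert$ together with convexity of $t\mapsto t^{p-2}$:
\begin{equation*}
\Vert \vz\Vert^{p-2}+\Vert \vz'\Vert^{p-2} \ge 2\left(\tfrac{\Vert \vz\Vert+\Vert \vz'\Vert}{2}\right)^{p-2} \ge 2^{3-p}\Vert \vz-\vz'\Vert^{p-2}.
\end{equation*}
Multiplying by $\tfrac12\Vert \vz-\vz'\Vert^2$ gives exactly the constant $(1/2)^{p-2}$.

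The monotonicity bound then converts to the function-value form of Definition \ref{dfn:UC}. Write
\begin{equation*}
d_p(\vz)-d_p(\vz')-\langle\nabla d_p(\vz'),\vz-\vz'\rangle=\int_0^1 \langle \nabla d_p(\vz'+t(\vz-\vz'))-\nabla d_p(\vz'),\vz-\vz'\rangle\,dt,
\end{equation*}
and apply the inequality to each integrand, which is at least $\mu\, t^{p-1}\Vert \vz-\vz'\Vert^p$ with $\mu=(1/2)^{p-2}$. Integrating $t^{p-1}$ over $[0,1]$ gives $\tfrac{\mu}{p}\Vert \vz-\vz'\Vert^p$, as required. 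The case $p=1$ is degenerate and I would exclude or handle it separately.

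\emph{Lipschitz $p$th derivative.} This is the harder part, since for $p\ge 3$ the $p$th derivative of $\Vert\vz\Vert^p/p$ is a nonsmooth-looking tensor. I would reduce to directions. For fixed unit $\vu$ and points $\vz,\vz'$, it suffices to bound $|\nabla^p d_p(\vz)[\vu]^p-\nabla^p d_p(\vz')[\vu]^p|$ by $(p-1)!\Vert\vz-\vz'\Vert$. This reduction holds because the spectral norm of a symmetric tensor equals its maximum over equal arguments (Banach's theorem). Along a line, $\phi(t)=\Vert \vz+t\vu\Vert^p/p$, and the quantity $\nabla^p d_p(\vz)[\vu]^p$ equals $\phi^{(p)}(0)$.

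When $p$ is even, $\Vert\cdot\Vert^p=(\Vert\cdot\Vert^2)^{p/2}$ is a polynomial. Its $p$th derivative is then a constant tensor, and the claim is trivial. For odd $p$, I would try to use a known result (Nesterov, "Implementable tensor methods", Lemma on $\Vert\cdot\Vert^{p+1}$ having Lipschitz $p$th derivative with constant $p!$), shifted by one index. Concretely, $d_p$ is the case $\Vert\cdot\Vert^{q+1}/(q+1)$ with $q=p-1$. That gives Lipschitz continuity of the $(p-1)$th derivative with constant $(p-1)!$, up to normalization.

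This index mismatch is where I expect the main obstacle. If the paper indeed means Lipschitzness of the $(p-1)$th derivative, the cited result applies directly. Otherwise, I would compute $\phi^{(p)}$ explicitly in the two-dimensional span of $\vz,\vu$ and bound its variation by hand.
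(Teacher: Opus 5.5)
Your argument for item 1 is correct for $p\ge 2$. It is essentially the proof of \citet[Lemma 4.2.3]{nesterov2018lectures}, which is all the paper does: it cites that lemma. The case $p=1$, however, cannot be ``handled separately''. The function $d_1(\vz)=\Vert\vz\Vert$ is linear along rays (take $\vz=2\vz'$ with $\vz'\neq\vzero$), so it is not first-order $\mu$-uniformly convex for any $\mu>0$. Item 1 must simply be restricted to $p\ge 2$. This is harmless, because the paper only applies the lemma to $\Vert\cdot\Vert^{p+1}/(p+1)$.

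For item 2, the ``otherwise'' branch of your plan is a step that would fail, because the claim as printed is false for every odd $p$. Restricted to a line $t\mapsto t\vu$ through the origin with $\Vert\vu\Vert=1$, $d_p$ becomes $|t|^p/p$, whose $p$th derivative is $(p-1)!\,{\rm sign}(t)$. So $\nabla^p d_p(t\vu)[\vu]^p$ jumps by $2(p-1)!$ across $t=0$, and $\nabla^p d_p$ is not even defined at $\vzero$. For $p=1$ this is just $\nabla d_1(\vz)=\vz/\Vert\vz\Vert$. No explicit computation of $\phi^{(p)}$ in ${\rm span}\{\vz,\vu\}$ can yield a Lipschitz bound.

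Instead of hedging, commit to the reading that $\nabla^{p-1}d_p$ is $(p-1)!$-Lipschitz. This is what the paper actually uses in Lemma~\ref{lem:prop-funcs}, where $\frac{\mu_x}{p+1}\Vert\vx-\vx_0\Vert^{p+1}$ contributes exactly $p!\mu_x$ to the Lipschitz constant of the $p$th derivative. Under that reading your citation branch is exactly the paper's proof, which cites \citet[Theorem 7.1]{rodomanov2020smoothness}. No normalization adjustment is needed, since $d_p=\Vert\cdot\Vert^{q+1}/(q+1)$ with $q=p-1$ and constant $q!=(p-1)!$.

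Your even-$p$ observation then becomes useful rather than trivial. Here $\phi$ is a degree-$p$ polynomial with leading coefficient $1/p$, so $\nabla^p d_p[\vu]^p=(p-1)!$ for every unit $\vu$. Banach's theorem then gives $\Vert\nabla^p d_p\Vert=(p-1)!$, hence the $(p-1)!$-Lipschitz $(p-1)$th derivative directly. Only odd $p$ genuinely needs the cited result.
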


\begin{proof} See  \citep[Lemma 4.2.3]{nesterov2018lectures} for item 1, and \citep[Theorem 7.1]{rodomanov2020smoothness} for item 2, respectively.
  
\end{proof}


Next, let us define uniformly-convex-uniformly-concave functions.

\begin{dfn}
We say $f(\vx,\vy)$ is $p$th-order $\mu_x$-uniformly-convex-$\mu_y$-uniformly-concave if $f(\vx,\,\cdot\,)$ is $p$th-order $\mu_x$-uniformly convex for any fixed $\vx \in \sR^{d_x}$ and $f(\,\cdot\,\vy)$ is $p$th-order $\mu_y$-uniformly concave for any fixed $\vy \in \sR^{d_y}$, where $\mu_x, \mu_y >0$. 
\end{dfn}

The following lemma shows that the gradient operator of a $\mu$-uniformly-convex-$\mu$-uniformly-concave function is $({2\mu}/{p})$-uniformly monotone.

\begin{lemma} \label{lem:U-monotone}
If $f: \gX \times \gY \rightarrow \sR$ is a $p$th-order $\mu$-uniformly-convex-$\mu$-uniformly concave function, then its gradient operator is $({2\mu}/{p})$-uniformly monotone, \textit{i.e.}, 
\begin{equation} \label{eq:U-monotone}
    \langle \vg_1 - \vg_2, \vz_1- \vz_2 \rangle \ge \frac{2\mu}{p} \Vert \vz_1 - \vz_2 \Vert^{p}, \quad \forall \vg_1 \in \mA(\vz_1), \vg_2 \in \mA(\vz_2),~~ \forall \vz_1,\vz_2 \in \gZ,
\end{equation}
where $\mA(\vz) = \mF(\vz) + \gN_\gZ(\vz)$ and $\mF = (\nabla_x f, - \nabla_y f)^\top$.
\end{lemma}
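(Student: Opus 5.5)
The plan is to reduce the claim to a statement about $\mF$ alone and then obtain the uniform monotonicity of $\mF$ by summing four uniform-convexity/concavity inequalities. One step, the final passage from the separate $\vx$- and $\vy$-distances to $\Vert \vz_1-\vz_2\Vert^p$, only works with the stated constant $2\mu/p$ when $p \le 2$ or under a joint reading of the hypothesis. I flag this below rather than hide it.

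\emph{Step 1 (normal cones).} Write $\vg_i = \mF(\vz_i) + \vc_i$ with $\vc_i \in \gN_\gZ(\vz_i)$. By the definition of the normal cone, $\langle \vc_1, \vz_2 - \vz_1\rangle \le 0$ and $\langle \vc_2, \vz_1-\vz_2\rangle \le 0$. Hence $\langle \vc_1 - \vc_2, \vz_1 - \vz_2\rangle \ge 0$, and it suffices to prove
\begin{equation*}
\langle \mF(\vz_1) - \mF(\vz_2), \vz_1 - \vz_2 \rangle \ge \frac{2\mu}{p}\Vert \vz_1 - \vz_2\Vert^p .
\end{equation*}

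\emph{Step 2 (four inequalities).} Apply Definition~\ref{dfn:UC} four times:
\begin{itemize}
\item to $f(\,\cdot\,,\vy_1)$ at base point $\vx_1$, evaluated at $\vx_2$;
\item to $f(\,\cdot\,,\vy_2)$ at base point $\vx_2$, evaluated at $\vx_1$;
\item to $-f(\vx_1,\,\cdot\,)$ at base point $\vy_1$, evaluated at $\vy_2$;
\item to $-f(\vx_2,\,\cdot\,)$ at base point $\vy_2$, evaluated at $\vy_1$.
\end{itemize}
Adding them, the function values $f(\vx_1,\vy_1)$, $f(\vx_2,\vy_2)$, $f(\vx_1,\vy_2)$ and $f(\vx_2,\vy_1)$ cancel exactly. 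The inner-product terms collect into $-\langle \mF(\vz_1)-\mF(\vz_2), \vz_1-\vz_2\rangle$ with $\mF = (\nabla_x f, -\nabla_y f)^\top$. This yields
\begin{equation*}
\langle \mF(\vz_1) - \mF(\vz_2), \vz_1 - \vz_2 \rangle \ge \frac{2\mu}{p}\left( \Vert \vx_1-\vx_2\Vert^p + \Vert \vy_1 - \vy_2\Vert^p\right).
\end{equation*}
The sign bookkeeping is routine but should be checked carefully.

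\emph{Step 3 (the obstacle).} It remains to compare $a^p + b^p$ with $(a^2+b^2)^{p/2}$, where $a = \Vert \vx_1-\vx_2\Vert$ and $b = \Vert\vy_1-\vy_2\Vert$.
\begin{itemize}
\item For $p=2$ the two are equal, so the stated constant $2\mu/p$ follows immediately.
\item For $p > 2$ the inequality runs the wrong way: one only has $a^p+b^p \ge 2^{1-p/2}(a^2+b^2)^{p/2}$. The literal partial-variable hypothesis therefore cannot give $2\mu/p$; for example, $f = \Vert\vx\Vert^p/p - \Vert \vy\Vert^p/p$ at $a=b$ shows the constant is not attained.
\end{itemize}

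To obtain the statement exactly as worded, I would therefore read the hypothesis in the joint sense in which the lemma is used later. That sense is that the regularized objective $f + \frac{\mu}{p}\Vert\vx-\bar\vx\Vert^p - \frac{\mu}{p}\Vert\vy-\bar\vy\Vert^p$ is handled through the power function $d_p$ of Lemma~\ref{lem:cubic-func} applied to the stacked variable. Concretely, the uniform convexity/concavity remainder in each of the four inequalities is taken to be $\frac{\mu}{p}\Vert \vz_1-\vz_2\Vert^p$, and the remainders then sum to $\frac{2\mu}{p}\Vert\vz_1-\vz_2\Vert^p$ directly. Under the purely partial reading, the argument above delivers the lemma with constant $2\mu/p$ for $p\le 2$ and $2^{2-p/2}\mu/p$ in general. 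Either constant suffices for the linear-rate arguments that follow, since only a constant factor is affected.
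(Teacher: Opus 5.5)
Your Steps 1--2 are exactly the paper's proof. The paper folds the normal-cone vectors into the four uniform-convexity inequalities instead of splitting them off first, which is equivalent. Your Step~3 objection is correct, and it applies to the paper too. The four inequalities only give $\langle \vg_1-\vg_2,\vz_1-\vz_2\rangle \ge \frac{2\mu}{p}(\Vert\vx_1-\vx_2\Vert^p+\Vert\vy_1-\vy_2\Vert^p)$. The paper passes from this to $\frac{2\mu}{p}\Vert\vz_1-\vz_2\Vert^p$ without comment, which is valid only for $p\le 2$.

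Later sections apply the lemma to $(p+1)$th-order uniform convexity, so the gap bites exactly in the high-order regime. Your repaired constant $2^{2-p/2}\mu/p$ is right for $p\ge 2$; in general the constant is $\min\{1,2^{1-p/2}\}\cdot 2\mu/p$. It only changes $p$-dependent constants in Corollary~\ref{cor:U-monotone} and in the inner-loop analysis of Appendix~\ref{apx:proof-algo}.

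Your counterexample also holds, but it needs the sharp modulus of $d_p$, not the guaranteed $2^{2-p}$ of Lemma~\ref{lem:cubic-func}. With the guaranteed modulus, the natural test points violate the bound only for $p\ge 5$. For $p=3$ and $d_x=d_y=1$, the best modulus of $|x|^3/3$ is $2-\sqrt2$. At $\vz_1=(1,1)$, $\vz_2=(-1,-1)$ one gets $\langle\mF(\vz_1)-\mF(\vz_2),\vz_1-\vz_2\rangle = 8 < \frac{64(\sqrt2-1)}{3} = \frac{2(2-\sqrt2)}{3}\Vert\vz_1-\vz_2\Vert^3$.

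What does not work is the ``joint reading'' you offer to recover the exact constant, and you should drop it. Each of the four inequalities moves one block with the other frozen. The first, for instance, has left-hand side $f(\vx_2,\vy_1)-f(\vx_1,\vy_1)-\langle\nabla_x f(\vx_1,\vy_1),\vx_2-\vx_1\rangle$. This is $0$ whenever $\vx_1=\vx_2$, even if $\vy_1\neq\vy_2$, so no $\mu>0$ can give it the remainder $\frac{\mu}{p}\Vert\vz_1-\vz_2\Vert^p$. Moreover, four such remainders would sum to $\frac{4\mu}{p}$, not $\frac{2\mu}{p}$.

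The joint reading also misdescribes how the lemma is used later. The regularizers $\frac{\gamma}{p+1}\Vert\vx-\bar\vx\Vert^{p+1}-\frac{\gamma}{p+1}\Vert\vy-\bar\vy\Vert^{p+1}$ are separable, not $d_{p+1}$ of the stacked variable. Up to shift and scaling, they are your own counterexample with $p+1$ in place of $p$.

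So the correct outcome of your argument is the separable bound of Step~2, or its consequence with the weakened constant. Under the paper's per-variable definition, the lemma as worded is false for $p\ge 3$, as your counterexample with the sharp modulus shows.
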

\begin{proof} For any $ (\vx_1,\vy_2),(\vx_2,\vy_2)\in \gX \times \gY$, we have that
\begin{align*}
    f(\vx_2,\vy_1) - f(\vx_1,\vy_1) &\ge \langle \nabla_x f(\vx_1,\vy_1) + \vu_1, \vx_2 - \vx_1 \rangle + \frac{\mu}{p} \Vert \vx_2 - \vx_1 \Vert^{p}, ~~ \vu_1 \in \gN_{\gX} (\vx_1); \\
    f(\vx_1,\vy_2) - f(\vx_2,\vy_2) &\ge \langle \nabla_x f(\vx_2, \vy_2) + \vu_2, \vx_1 - \vx_2 \rangle + \frac{\mu}{p} \Vert \vx_2 - \vx_1 \Vert^{p}, ~~ \vu_2 \in \gN_{\gX} (\vx_2); \\
    -f(\vx_1,\vy_2) + f(\vx_1,\vy_1) &\ge \langle -\nabla_y f(\vx_1,\vy_1) +\vv_1, \vy_2 - \vy_1 \rangle + \frac{\mu}{p} \Vert \vy_2 - \vy_1 \Vert^{p}, ~~\vv_1\in \gN_{\gY}(\vy_1);  \\
    -f(\vx_2,\vy_1) + f(\vx_2,\vy_2) &\ge  \langle -\nabla_y f(\vx_2,\vy_2) + \vv_2, \vy_1 - \vy_2 \rangle + \frac{\mu}{p} \Vert \vy_2 - \vy_1 \Vert^{p}, ~~ \vv_2 \in \gN_{\gY} (\vy_2).
\end{align*}
Summing up the above four inequalities yields Eq. (\ref{eq:U-monotone}).  
\end{proof}

\begin{cor} \label{cor:U-monotone}
If $f: \gX \times \gY \rightarrow \sR$ is a $p$th-order $\mu$-uniformly-convex-$\mu$-uniformly concave function, then for any $\vg \in \mA(\vz)$ and $\vz^* \in \{\vz \in \sR^d: 0 \in \mA(\vz)  \}$ we have that 
    \begin{equation*}
        \Vert \vg \Vert \ge \frac{2 \mu}{p} \Vert \vz - \vz^* \Vert^{p-1}.
    \end{equation*}
\end{cor}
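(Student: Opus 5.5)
The plan is to specialize Lemma~\ref{lem:U-monotone} to the pair $(\vz, \vz^*)$ and then apply Cauchy--Schwarz.

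First, $\vz^*$ satisfies $0 \in \mA(\vz^*)$, so we may take $\vg_2 = 0 \in \mA(\vz^*)$ in Eq.~(\ref{eq:U-monotone}). Taking $\vz_1 = \vz$, $\vg_1 = \vg \in \mA(\vz)$ and $\vz_2 = \vz^*$ then gives
\begin{equation*}
    \langle \vg, \vz - \vz^* \rangle \ge \frac{2\mu}{p} \Vert \vz - \vz^* \Vert^{p}.
\end{equation*}

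Next, Cauchy--Schwarz bounds the left-hand side by $\Vert \vg \Vert \, \Vert \vz - \vz^* \Vert$. If $\vz = \vz^*$, the claim is trivial because the right-hand side of the corollary is $0$ (using $p \ge 2$, or $p-1 \ge 0$ with the convention $0^0$ handled separately). Otherwise we divide by $\Vert \vz - \vz^* \Vert > 0$ and obtain
\begin{equation*}
    \Vert \vg \Vert \ge \frac{2\mu}{p} \Vert \vz - \vz^* \Vert^{p-1}.
\end{equation*}

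I expect no step to be a real obstacle. The only care needed is to check that Lemma~\ref{lem:U-monotone} applies with an arbitrary element of $\mA(\vz^*)$, in particular the zero element, and to dispose of the degenerate case $\vz = \vz^*$ separately.
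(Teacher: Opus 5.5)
Your derivation is the one the paper intends: it gives no separate proof, and the corollary is meant to be read off Lemma~\ref{lem:U-monotone} with $\vg_2 = 0$ plus Cauchy--Schwarz. Given that lemma, your deduction is logically fine. The trouble is the lemma you took on trust, which is correct as written only for $p \le 2$. Summing the four inequalities in its proof yields
\[
\langle \vg_1 - \vg_2, \vz_1 - \vz_2 \rangle \ge \frac{2\mu}{p} \left( \Vert \vx_1 - \vx_2 \Vert^{p} + \Vert \vy_1 - \vy_2 \Vert^{p} \right).
\]
For $p > 2$, superadditivity of $t \mapsto t^{p/2}$ gives $\Vert \vx_1 - \vx_2 \Vert^{p} + \Vert \vy_1 - \vy_2 \Vert^{p} \le \Vert \vz_1 - \vz_2 \Vert^{p}$, strictly whenever both blocks differ. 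That is the wrong direction for the lemma's claim.

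In fact the corollary itself is false for every $p > 2$. Take $\gX = \gY = [0,1]$ and $f(x,y) = \frac{1}{2}x^2 - \frac{1}{2}y^2$. Since $\frac{1}{2}h^2 \ge \frac{1}{2}h^p$ for $|h| \le 1$, $f$ is $p$th-order $(p/2)$-uniformly-convex-$(p/2)$-uniformly-concave on $\gX \times \gY$, so $\frac{2\mu}{p} = 1$. The point $\vz^* = (0,0)$ has $\mF(\vz^*) = 0$. At $\vz = (1,1)$, the element $\vg = \mF(\vz) = (1,1) \in \mA(\vz)$ has $\Vert \vg \Vert = \sqrt{2}$. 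But the claimed bound is $\frac{2\mu}{p}\Vert \vz - \vz^* \Vert^{p-1} = 2^{(p-1)/2}$, which exceeds $\sqrt{2}$.

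The fix is to finish the lemma with the power-mean inequality $a^p + b^p \ge 2^{1-p/2}(a^2+b^2)^{p/2}$, valid for $p \ge 2$. This makes $\mA$ $(2^{2-p/2}\mu/p)$-uniformly monotone in the sense of Eq.~(\ref{eq:U-monotone}). Your argument then goes through verbatim: take $\vg_2 = 0$, apply Cauchy--Schwarz, and treat $\vz = \vz^*$ separately. It proves $\Vert \vg \Vert \ge \frac{2^{2-p/2}\mu}{p} \Vert \vz - \vz^* \Vert^{p-1}$, which agrees with the stated bound at $p = 2$ and is attained with equality in the example above. Where the paper applies the corollary at order $p+1$ (e.g.\ Eq.~(\ref{eq:Psi-zdist})), the correction costs only a constant factor $2^{(p-1)/2}$, so the complexity bounds are unaffected.
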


\section{Accelerated Inexact Proximal Extragradient} \label{sec:AIPE}

Before presenting our main algorithm, we first introduce the Accelerated Inexact Proximal Extragradient (AIPE) method for minimizing a convex function $h(\vz): \gZ \rightarrow \sR$, which
will be an important component in our main algorithm. We present its procedure in Algorithm \ref{alg:ANPE-restart}, which uses the inexact oracles to
generalize the recently proposed optimal Monteiro-Svaiter acceleration (OptMS) \citep{carmon2022optimal}, and applies the restart scheme \citep{gasnikov2019optimal} to exploit uniform convexity. 
The lines using inexact oracles are marked in blue in Algorithm \ref{alg:ANPE-restart}.

\begin{dfn}[Inexact Zeroth-order Oracle]
    We call ${\rm iFunc}_h$ a $\delta$-zeroth-order oracle of function $h:\gZ \rightarrow \sR$ if for every $\vz \in \gZ$ it returns $\hat h = {\rm iFunc}_h(\vz,\delta)$ satisfying~$\vert \hat h- h(\vz)  \vert \le \delta$.
\end{dfn}

\begin{algorithm*}[t]  
\caption{AIPE-restart$(\vz^{(0)}, \gamma, \delta, T, S) $
{\color{gray} $\triangleright$ A meta algorithm}
}\label{alg:ANPE-restart}
\begin{algorithmic}[1] 
\renewcommand{\algorithmicrequire}{ \textbf{Input:}}
\FOR{$s=0,\cdots,S-1$}
\STATE $\vv_0^{(s)} = \bar \vz_0^{(s)} = \vz^{(s)}_0 = \vz^{(s)}$, $A_0 = 0$, $\lambda_1'^{(s)} = 1$ \\
\STATE {\color{gray}$\triangleright$ Run MS acceleration with inexact proximal and gradient oracle in each epoch.} \\
\FOR{$t = 0,\cdots, T-1$}  
\STATE \quad {\color{blue}$ \hat h_t^{(s)} = {\rm iFunc}_h(\vz_t^{(s)},\delta)$, $\tilde h_t^{(s)} = {\rm iFunc}_h(\tilde \vz_t^{(s)},\delta)$ } {\color{gray} \hfill $\triangleright$ Record the function values.} \label{Line:ifunc} \\ 
\STATE \quad 
Solve ${a_{t+1}'^{(s)}} > 0 $ from $ A_t^{(s)} +  a_{t+1}'^{(s)} = {\color{blue}2} \lambda_{t+1}'^{(s)} \left(a_{t+1}'^{(s)}\right)^2$ and let $A_{t+1}'^{(s)} = A_t^{(s)} + a_{t+1}'^{(s)}$ \label{Line:eq-a}\\
\STATE \quad $\bar \vz_t^{(s)} = \frac{A_t^{(s)}}{A_{t+1}'^{(s)}} \vz_t^{(s)} + \frac{a_{t+1}'^{(s)}}{A_{t+1}'^{(s)}} \vv_t^{(s)} $  \label{Line:eq-a-end} \\
\STATE \quad $\tilde \vz_{t+1}^{(s)}, \vu_{t+1}^{(s)}= {\color{blue}{\rm iProx}_h (\bar \vz_t^{(s)}, \gamma, \delta)} $ \label{Line:iprox} {\color{gray} $\triangleright$ Inexact $p$th-order proximal update.} %
\STATE \quad $\lambda_{t+1}^{(s)} = \gamma \Vert \tilde \vz_{t+1}^{(s)} - \bar \vz_t^{(s)} \Vert^{p-1}$
\STATE \quad {\color{gray} $\triangleright$ Adaptively search $\lambda_{t+1}'^{(s)} \approx \lambda_{t+1}^{(s)}$.} 
\STATE \quad \textbf{if} $t = 0 $ \textbf{then}  \label{line:update-lmbd}
\STATE \quad \quad  $\lambda_{t+1}'^{(s)} = \lambda_{t+1}^{(s)}$\\
\STATE \quad \textbf{end if} \\
\STATE \quad  \textbf{if} $\lambda_{t+1}^{(s)} \le \lambda_{t+1}'^{(s)}$ \textbf{then}  \\
\STATE \quad \quad $\gamma_{t+1}^{(s)}=1$, $a_{t+1}^{(s)} = a_{t+1}'^{(s)} $, $A_{t+1}^{(s)} = A_{t+1}'^{(s)}$, $\lambda_{t+2}'^{(s)} = \frac{1}{2} \lambda_{t+1}'^{(s)}$\\
\STATE \quad \textbf{else} \\
\STATE \quad \quad $\gamma_{t+1}^{(s)} = \frac{\lambda_{t+1}'^{(s)}}{\lambda_{t+1}^{(s)}}$, $a_{t+1}^{(s)} = \gamma_{t+1}^{(s)} a_{t+1}'^{(s)}$, $A_{t+1}^{(s)} = A_t^{(s)} + a_{t+1}^{(s)}$, $\lambda_{t+2}'^{(s)} = 2 \lambda_{t+1}'^{(s)}$ \\
\STATE \quad \textbf{end if} \label{line:update-lmbd-end} \\
\STATE \quad $ \vz_{t+1}^{(s)} = \frac{(1- \gamma_{t+1}^{(s)}) A_t^{(s)}}{A_{t+1}^{(s)}} \vz_t^{(s)} + \frac{\gamma_{t+1}^{(s)} A_{t+1}'^{(s)}}{A_{t+1}^{(s)}} \tilde \vz_{t+1}^{(s)}$ \\
\STATE \quad {\color{blue} $\vg_{t+1}^{(s)} = {\rm iGrad}_h(\tilde \vz_{t+1}^{(s)},\delta)$} \label{Line:igrad} \\
\STATE \quad $\vv_{t+1}^{(s)} = {\rm Proj}_{\gZ} (\vv_t^{(s)} - a_{t+1}^{(s)} (\vg_{t+1}^{(s)} + \vu_{t+1}^{(s)}))$ \hfill {\color{gray}$\triangleright$ Extragradient update with inexact gradients.} \\ 
\ENDFOR \\
\STATE 
{\color{gray} $\triangleright$
Select $\vz^{s+1}$ as the point that achieves the lowest value of $\{ \hat h_t^{(s)},\tilde h_t^{(s)} \}_{t=0}^{T-1}$.}
\STATE  
$\hat t = \arg \min_{0 \le t \le T} \hat h_t^{(s)}$, $\tilde t = \arg \min_{0 \le t \le T} \tilde h_t^{(s)}$ \\ \label{line:select-best-beg}
\STATE \textbf{if} $\hat h_{\hat t}^{(s)} < \tilde h_{\tilde t}^{(s)}$  \textbf{then}\\
\STATE \quad $\hat \vz^{(s+1)} = \hat \vz_t^{(s)}$ \\
\STATE \textbf{else} \\
\STATE \quad $\tilde \vz^{(s+1)} = \tilde \vz_t^{(s)}$ \\
\STATE \textbf{end if} \\ \label{line:select-best-end}
\ENDFOR \\
\RETURN $\vz^{S}$
\end{algorithmic}
\end{algorithm*}

\begin{dfn}[Inexact First-order Oracle]
 We call ${\rm iGrad}_h$ a $\delta$-first-order oracle of function $h:\gZ \rightarrow \sR$ if for every $\vz \in \gZ$ the oracle returns 
 $\vg = {\rm iGrad}_h(\vz,\delta)$ satisfying $\Vert \vg 
 - \nabla h(\vz) \Vert \le \delta$.
\end{dfn}

\begin{dfn}[Inexact $p$th-order Proximal Oracle] \label{dfn:inexact-MS-oracle}
We call ${\rm iProx}_h$ 
a $(\delta,\gamma)$-$p$th-order proximal oracle
for function $h: \gZ \rightarrow \sR$ if for every $ \bar \vz \in \gZ$ and $\gamma>0$ the oracle returns $(\vz, \vu) = {\rm iProx}_h(\bar \vz, \gamma, \delta)$ with $\vz \in \gZ$ and $\vu \in \gN_{\gZ}(\vz)$ satisfying
\begin{equation} \label{eq:cond-appr-prox}
    \Vert  \nabla h(\vz) + \vu + \lambda(\vz - \bar \vz) \Vert \le \frac{\lambda}{2} \Vert \vz-  \bar \vz \Vert+ \delta, \quad {\rm where} \quad \lambda = \gamma \Vert \vz - \bar \vz \Vert^{p-1}. 
\end{equation}
\end{dfn}

When $\delta=0$, Definition \ref{dfn:inexact-MS-oracle} reduces to the oracle used in previous works \citep{monteiro2013accelerated,carmon2022optimal,kovalev2022first,nesterov2021inexact,nesterov2023inexact}, which can be implemented by a tensor step, as shown in the following lemma.

\begin{lemma}[{\citet[Section 3.1]{carmon2022optimal}}] \label{lem:CRN-is-MS}
If $h(\vz): \gZ \rightarrow \sR$ satisfies Assumption~\ref{asm:Hess-lip}, then taking a $p$th-order tensor step  according to Definition \ref{dfn:tensor-step} with regularization $M= 2L_p$
implements an $( 0, L_p)$-proximal oracle.
\end{lemma}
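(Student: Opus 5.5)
The plan is to read the $(0,L_p)$-proximal oracle condition (\ref{eq:cond-appr-prox}) with $\delta = 0$ and $\gamma = L_p$ directly off the optimality condition (\ref{eq:Tensor-step}) of the tensor step. We apply Definition~\ref{dfn:tensor-step} with $\mF = \nabla h$ and $M = 2L_p$, and let $\vz = \gT_p(\bar\vz;\nabla h, 2L_p)$ and $r = \Vert \vz - \bar\vz\Vert$. Condition (\ref{eq:Tensor-step}) says precisely that
\begin{equation*}
\vu := -\bar \mF_{p-1}(\vz;\bar\vz) - \frac{2L_p}{p!}\, r^{p-1}(\vz - \bar\vz) \in \gN_\gZ(\vz).
\end{equation*}
We return this $\vu$ together with $\vz$, so that $\vz \in \gZ$ and $\vu \in \gN_\gZ(\vz)$ as Definition~\ref{dfn:inexact-MS-oracle} requires. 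With $\lambda = L_p r^{p-1}$, substituting $\vu$ gives
\begin{equation*}
\nabla h(\vz) + \vu + \lambda(\vz-\bar\vz) = \bigl(\nabla h(\vz) - \bar\mF_{p-1}(\vz;\bar\vz)\bigr) + \Bigl(L_p - \frac{2L_p}{p!}\Bigr) r^{p-1}(\vz - \bar\vz).
\end{equation*}

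The first term is a Taylor remainder. By Assumption~\ref{asm:Hess-lip}, $\nabla^{p-1}\mF = \nabla^p h$ is $L_p$-Lipschitz. The integral form of the Taylor remainder for the $(p-1)$th-order expansion of $\mF$ then gives
\begin{equation*}
\Vert \nabla h(\vz) - \bar\mF_{p-1}(\vz;\bar\vz)\Vert \le \frac{L_p}{p!}\, r^p.
\end{equation*}
The second term has norm $\bigl|1 - 2/p!\bigr| L_p r^p$. The triangle inequality then yields
\begin{equation*}
\Vert \nabla h(\vz) + \vu + \lambda(\vz-\bar\vz)\Vert \le \Bigl(\frac{1}{p!} + \Bigl|1-\frac{2}{p!}\Bigr|\Bigr) L_p r^p.
\end{equation*}
We must compare this with the target $\frac{\lambda}{2} r = \frac{L_p}{2} r^p$.

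The main obstacle is this constant bookkeeping. In the second-order case $p=2$, the setting of \citet[Section 3.1]{carmon2022optimal} that the lemma cites, the regularizer coefficient is $M/p! = L_p = \gamma$. The mismatch term therefore vanishes, and the bound is exactly $\frac{L_p}{2}r^p = \frac{\lambda}{2}\Vert\vz-\bar\vz\Vert$, which proves the claim. For $p \ge 3$ the argument closes only if the regularizer coefficient in (\ref{eq:Tensor-step}) is identified with $\gamma = L_p$, so that the mismatch term is zero; it must also use the Taylor bound $\frac{L_p}{p!}r^p \le \frac{L_p}{2}r^p$, which holds since $p! \ge 2$. I would therefore present the proof as the identity above plus the Taylor estimate. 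I would check carefully that the normalization of $M$ in Definition~\ref{dfn:tensor-step} matches $\gamma$ in Definition~\ref{dfn:inexact-MS-oracle}; this is the only place the specific constants $M = 2L_p$ and $\gamma = L_p$ enter. As far as I can see, with the normalizations as literally written, the triangle-inequality bound above verifies the stated constants only for $p = 2$.
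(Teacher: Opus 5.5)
Your argument matches the one behind the cited result; the paper gives no proof beyond the citation. You take $\vu$ to be minus the tensor-step operator, which lies in $\gN_\gZ(\vz)$ by (\ref{eq:Tensor-step}). You then split $\nabla h(\vz)+\vu+\lambda(\vz-\bar\vz)$ into the Taylor remainder of $\nabla h$ plus the mismatch term $(\gamma-M/p!)\,r^{p-1}(\vz-\bar\vz)$, and bound the remainder by $\frac{L_p}{p!}r^p$. For $p=2$ this is a complete proof.

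Your doubt about other $p$ is justified, and it is not an artifact of the triangle inequality. With the normalizations of Definitions~\ref{dfn:tensor-step} and~\ref{dfn:inexact-MS-oracle}, the statement is false for every $p\neq 2$, including $p=1$, which your fix does not cover.

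Here is a counterexample.
\begin{itemize}
\item Take $h(\vz)=\langle \vg,\vz\rangle$ with $\vg\neq 0$. Then $\nabla^p h\equiv 0$ is $L_p$-Lipschitz for any $L_p>0$, and $\nabla h$ is monotone.
\item Let $\gZ$ be a ball around $\bar\vz$ of radius larger than $t:=(p!\Vert\vg\Vert/(2L_p))^{1/p}$.
\item The tensor-step operator $\vg+\frac{2L_p}{p!}\Vert\vz-\bar\vz\Vert^{p-1}(\vz-\bar\vz)$ is strictly monotone. It vanishes at $\vz=\bar\vz-t\vg/\Vert\vg\Vert$, which is interior to $\gZ$. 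So this is the unique tensor step, and $\vu=0$ is forced.
\item With $\lambda=L_p t^{p-1}$ you get $\nabla h(\vz)+\lambda(\vz-\bar\vz)=(1-\frac{p!}{2})\vg$, while $\frac{\lambda}{2}\Vert\vz-\bar\vz\Vert=\frac{p!}{4}\Vert\vg\Vert$.
\item The condition $\vert 1-\frac{p!}{2}\vert\le\frac{p!}{4}$ holds only for $p=2$.
\end{itemize}

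Your identity proves a corrected version verbatim for all $p\ge 1$: the step with $M=2L_p$ is a $(0,2L_p/p!)$-proximal oracle, i.e.\ $\gamma=M/p!$. The mismatch term then vanishes, and the remainder bound is exactly $\frac{L_p}{p!}r^p=\frac{\lambda}{2}r$.

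Your alternative fix ($M=p!\,L_p$, $\gamma=L_p$) works only for $p\ge 2$, since it needs $p!\ge 2$. Either correction changes only $p$-dependent constants downstream. For instance, Theorem~\ref{thm:ANPE} should run Algorithm~\ref{alg:ANPE-restart} with $\gamma=2L_p/p!$. So the paper's complexity bounds are unaffected for constant $p$.
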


As a warm-up, we first give the theoretical guarantee of the exact algorithm for uniformly convex functions, which can be simply obtained by combining the guarantee of OptMS algorithm \citep{carmon2022optimal} with the standard analysis for restart schemes \citep{arjevani2019oracle,gasnikov2019optimal}.

\begin{theorem}[OptMS-restart] \label{thm:ANPE}
Assume $h(\vz): \gZ \rightarrow \sR$ is $(p+1)$th-order $\mu$-uniformly convex and has $L_p$-Lipschitz continuous $p$th-order derivatives.
Then running Algorithm \ref{alg:ANPE-restart} with ${\rm iProx}_h$ implemented with $p$th-order tensor step according to Lemma \ref{lem:CRN-is-MS} and $\gamma = L_p$
returns a point~$\vz \in \gZ$ such that $h(\vz^S) - h(\vz^*) \le \epsilon$ in $\gO\left( (\nicefrac{L_p}{\mu})^{2/(3p+1)} \log (\nicefrac{\Delta}{\epsilon})  \right)$ $p$th-order oracle calls.
\end{theorem}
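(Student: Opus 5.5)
The argument is the standard restart analysis: one epoch of exact OptMS shrinks the optimality gap by a constant factor, so $\log(\Delta/\epsilon)$ epochs suffice. Here $\Delta := h(\vz^{(0)}) - h(\vz^*)$ is the initial gap. I take $\delta = 0$, so ${\rm iFunc}_h$, ${\rm iGrad}_h$ and ${\rm iProx}_h$ are exact. By Lemma~\ref{lem:CRN-is-MS}, each call to ${\rm iProx}_h$ with $\gamma = L_p$ is a single $p$th-order tensor step with $M = 2L_p$. Each inner iteration therefore costs $\gO(1)$ $p$th-order oracle calls, and the total complexity is $\gO(ST)$.

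\textbf{Step 1: guarantee for a single epoch.} I would invoke the OptMS guarantee of \citet{carmon2022optimal}, accounting for the factor $2$ in Line~\ref{Line:eq-a}, which only changes constants. For an $(0,L_p)$-proximal oracle, after $T$ inner iterations started from $\vz^{(s)}$, the best iterate among the recorded points $\{\vz_t^{(s)},\tilde \vz_t^{(s)}\}$ satisfies
\begin{equation*}
\min_{t \le T} h(\cdot) - h(\vz^*) \le \frac{C_p L_p \Vert \vz^{(s)} - \vz^*\Vert^{p+1}}{T^{(3p+1)/2}},
\end{equation*}
with $C_p$ depending only on $p$. Selection by function value in Lines~\ref{line:select-best-beg}--\ref{line:select-best-end} is exact since $\delta=0$, so $h(\vz^{(s+1)})$ is at most this minimum. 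The main obstacle is checking that the adaptive $\lambda'$ search in Lines~\ref{line:update-lmbd}--\ref{line:update-lmbd-end}, together with the convex-combination update of $\vz_{t+1}$, reproduces the OptMS potential argument. In particular, one needs $A_T \gtrsim T^{(3p+1)/2}/(L_p D^{p-1})$ with $D = \Vert \vz^{(s)} - \vz^*\Vert$, and possibly evaluated at the best iterate rather than the last. I would cite the OptMS analysis rather than redo it. I would only verify that the estimate sequence $A_t h(\vz_t) + \frac12\Vert \vv_t - \vz^*\Vert^2$ is nonincreasing up to $A_t h(\vz^*)$, with the projection in the $\vv$ update handled by $\vu_{t+1}\in\gN_\gZ(\tilde\vz_{t+1})$.

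\textbf{Step 2: restart contraction.} By Lemma~\ref{lem:UC-grad-dominant} applied with order $p+1$, $\Vert \vz^{(s)} - \vz^*\Vert^{p+1} \le \frac{p+1}{\mu}\bigl(h(\vz^{(s)}) - h(\vz^*)\bigr)$. Substituting into Step 1 gives
\begin{equation*}
h(\vz^{(s+1)}) - h(\vz^*) \le \frac{C_p (p+1) L_p}{\mu\, T^{(3p+1)/2}} \bigl(h(\vz^{(s)}) - h(\vz^*)\bigr).
\end{equation*}
Choosing $T = \lceil (2C_p(p+1) L_p/\mu)^{2/(3p+1)}\rceil = \gO((L_p/\mu)^{2/(3p+1)})$ makes the factor at most $1/2$.

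\textbf{Step 3: count oracle calls.} Taking $S = \lceil \log_2(\Delta/\epsilon)\rceil$ epochs yields $h(\vz^S) - h(\vz^*) \le \epsilon$. The total cost is $ST = \gO\bigl((L_p/\mu)^{2/(3p+1)}\log(\Delta/\epsilon)\bigr)$ $p$th-order oracle calls, as claimed.
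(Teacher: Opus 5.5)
Your proposal is correct and follows essentially the same route as the paper. The paper obtains this theorem as the $\delta=0$ case of Theorem~\ref{thm:AIPE}, whose proof is exactly your per-epoch halving argument: the OptMS rate of \citet{carmon2022optimal} is invoked with the initial distance bounded via uniform convexity (Lemma~\ref{lem:UC-grad-dominant}), giving $T=\gO((\gamma/\mu)^{2/(3p+1)})$ and $S=\lceil\log_2(\Delta/\epsilon)\rceil$; by arguing the exact case directly, you avoid the inexactness bookkeeping (the $\delta_{t+1}^{(s)}$ terms and the induction bounding $A_T^{(s)}$), which is vacuous at $\delta=0$.
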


\begin{proof}
It is implied by Theorem \ref{thm:AIPE} with $\delta=0$.  
\end{proof}


Next, we state the following general result that allows for inexact oracle errors.

\begin{theorem}[AIPE-restart] \label{thm:AIPE}
Assume that $h(\vz): \gZ \rightarrow \sR$ is $(p+1)$th-order $\mu$-uniformly convex. Let $\vz^* = \arg \min_{\vz \in \gZ} h(\vz)$, $\Delta = h(\vz^0) - h(\vz^*)$, and $D_\gZ = \sup_{\vz, \vz' \in \gZ}\Vert \vz- \vz' \Vert$. If
\begin{equation} \label{eq:cond-delta}
    \delta \le \min \left\{
    \frac{ (1 +\sqrt{17}) \epsilon^2 \min \left\{D_\gZ, \left(\frac{(p+1)\epsilon^2}{2 \mu \Delta}\right)^{\frac{1}{p+1}} \right\}}{2^7 (7+ \sqrt{6})D_\gZ^2  \Delta  }, \frac{\epsilon^2}{2^4 \max\{ D_\gZ, 1 \}  \Delta }  \right\}
\end{equation}
then running Algorithm~\ref{alg:ANPE-restart} with
$S = \lceil \log_2 (\nicefrac{\Delta}{\epsilon}) \rceil$ and $T = \gO\left( (\nicefrac{\gamma}{\mu})^{2/(3p+1)} \right)$ returns $\vz^{S}$ such that $h(\vz^S) - h(\vz^*) \le \epsilon$ in total $TS = \gO\left( \left( (\nicefrac{\gamma}{\mu})^{2/(3p+1)} \right) \log (\nicefrac{\Delta}{\epsilon}) \right) $ iterations.
\end{theorem}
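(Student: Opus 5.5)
The proof has two layers. The inner layer is a single-epoch guarantee for the inexact OptMS iteration, obtained by redoing the analysis of \citet{carmon2022optimal} while tracking the additive errors $\delta$. The outer layer is the standard restart argument \citep{gasnikov2019optimal}, which turns this into geometric decrease of $h(\vz^{(s)})-h(\vz^*)$ via $(p+1)$th-order uniform convexity.

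\textbf{Single epoch.} Fix an epoch $s$ and drop the superscript. I will track the potential
\begin{equation*}
E_t = A_t\,(h(\vz_t)-h(\vz^*)) + \tfrac12\Vert \vv_t-\vz^*\Vert^2 .
\end{equation*}
The key per-step inequality comes from condition (\ref{eq:cond-appr-prox}). Write $\nabla h(\tilde\vz_{t+1})+\vu_{t+1} = \lambda_{t+1}(\bar\vz_t-\tilde\vz_{t+1})+\ve$ with $\Vert\ve\Vert\le \frac{\lambda_{t+1}}{2}\Vert\tilde\vz_{t+1}-\bar\vz_t\Vert+\delta$. Then convexity, the projection inequality for $\vv_{t+1}$, and the choice $A_t+a'_{t+1}=2\lambda'_{t+1}(a'_{t+1})^2$ give
\begin{equation*}
E_{t+1}\le E_t - c\,A'_{t+1}\lambda_{t+1}\Vert\tilde\vz_{t+1}-\bar\vz_t\Vert^2 + a_{t+1}\,\delta\,\bigl(2D_\gZ+\ldots\bigr).
\end{equation*}
The factor $2$ in Line~\ref{Line:eq-a}, compared with the exact OptMS, provides the slack that absorbs the $\lambda/2$ error. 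The residual $\delta$ terms are bounded using the diameter $D_\gZ$, since $\vv_t,\vz^*\in\gZ$, and the inexact gradient $\vg_{t+1}$ contributes another $a_{t+1}\delta D_\gZ$. The $\gamma_{t+1}<1$ branch is handled exactly as in \citet{carmon2022optimal}: the convex combination defining $\vz_{t+1}$ keeps the function-value part of the potential valid. Summing over $t$ then gives
\begin{equation*}
A_T(h(\vz_T)-h(\vz^*))+\sum_t A'_{t+1}\lambda_{t+1}\Vert\tilde\vz_{t+1}-\bar\vz_t\Vert^2\lesssim \Vert\vz_0-\vz^*\Vert^2 + A_T\delta D_\gZ .
\end{equation*}

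\textbf{Growth of $A_T$.} Next I will show $A_T\gtrsim \Vert\vz_0-\vz^*\Vert^{-(p-1)}\gamma^{-1}T^{(3p+1)/2}$, following the OptMS argument. Because $\lambda_{t+1}=\gamma\Vert\tilde\vz_{t+1}-\bar\vz_t\Vert^{p-1}$, the summed movement bound controls $\sum_t A'_{t+1}\lambda_{t+1}^{-2/(p-1)}$ from above. The doubling and halving of $\lambda'$ guarantees that the accepted and rejected steps balance, up to a constant factor. A reverse-H\"older argument then yields the $T^{(3p+1)/2}$ growth.

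\textbf{Restart.} Uniform convexity via Lemma~\ref{lem:UC-grad-dominant} gives $\Vert\vz^{(s)}-\vz^*\Vert^{p+1}\le \frac{p+1}{\mu}\Delta_s$. Substituting this into the epoch bound gives $\min(\text{recorded values})-h(\vz^*)\le C(\gamma/\mu)\,T^{-(3p+1)/2}\Delta_s + O(\delta D_\gZ)$. Choosing $T=\gO((\gamma/\mu)^{2/(3p+1)})$ makes the first term at most $\Delta_s/4$. The selection step reads function values through ${\rm iFunc}_h$, so the selected point loses at most $2\delta$ relative to the true best recorded point. Hence $\Delta_{s+1}\le\Delta_s/2$ as long as $\Delta_s\ge\epsilon$, and $S=\lceil\log_2(\Delta/\epsilon)\rceil$ epochs suffice.

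\textbf{Main obstacle.} The hardest part is the error accounting. The $\delta$ term is multiplied by $A_T$, and it also perturbs the step-size balancing, because $\lambda_{t+1}$ is computed from an inexact prox point. I also need the accumulated error per epoch to stay below roughly $\epsilon/4$ relative to $\Delta_s$. This is where the explicit condition (\ref{eq:cond-delta}) will enter, through the bound $\Vert\vz^{(s)}-\vz^*\Vert\le\min\{D_\gZ,((p+1)\epsilon^2/(2\mu\Delta))^{1/(p+1)}\}$-type estimates. My first attempt will be to make the error scale like $\delta D_\gZ^2\Delta/\epsilon^2$ and check it against (\ref{eq:cond-delta}).
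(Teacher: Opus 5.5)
\paragraph{There is a genuine gap, and it is the step you defer as the ``main obstacle''.} Your plan has no mechanism that controls $A_T$.

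The error terms $\sum_t a_{t+1}\delta\Vert\vv_t-\vz^*\Vert$ and $\sum_t a_{t+1}^2\delta^2\le A_T^2\delta^2$ enter in two places. They appear in the function-value bound, and also in the movement bound that the OptMS growth argument relies on. The quadratic term comes from Young's inequality applied to the inexact gradient and to the $\delta$ in (\ref{eq:cond-appr-prox}).

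Dividing by $A_T$ does not help. You are left with $h(\vz_T)-h(\vz^*)\le \Vert\vz_0-\vz^*\Vert^2/(2A_T)+\delta D_\gZ+3A_T\delta^2$. Moreover, $A_T$ is not bounded a priori: $a'_{t+1}$ behaves like $1/\lambda'_{t+1}+\sqrt{A_t/\lambda'_{t+1}}$, which blows up when the prox steps become short.

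The paper obtains the bound by an induction inside each epoch:
\begin{itemize}
\item One may assume every recorded iterate of epoch $s$ has gap at least $\epsilon^{(s+1)}/2$; otherwise the selection step already returns a good point.
\item Under this assumption the potential gives $A_K\le 2(\beta^{(s)})^2/\epsilon^{(s+1)}$.
\item The inexact prox condition, combined with $h(\tilde\vz_t)-h(\vz^*)\le\Vert\nabla h(\tilde\vz_t)+\vu_t\Vert\,\Vert\tilde\vz_t-\vz^*\Vert$, gives $\lambda_t\ge\epsilon^{(s+1)}/(6D_\gZ^2)$.
\item The doubling/halving rule transfers this to a lower bound on $\lambda'_{K+1}$. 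Hence $a'_{K+1}=\gO(D_\gZ^2/\epsilon^{(s+1)})$ and $A_T\le\bar C=2^{S+2}(7+\sqrt6)D_\gZ^2/\epsilon$.
\end{itemize}
This is where your guessed scaling $\delta D_\gZ^2\Delta/\epsilon^2$ actually comes from.

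\paragraph{Second, bounding $\Vert\vv_t-\vz^*\Vert$ by $D_\gZ$ is too crude for the stated threshold,} even once $A_T\le\bar C$. Condition (\ref{eq:cond-delta}) only guarantees $A_T\delta\le c\beta^{(s)}$ for a small constant $c$, where $\beta^{(s)}=\min\{D_\gZ,((p+1)\epsilon^{(s)}/\mu)^{1/(p+1)}\}$. Your accumulated error is then $c\beta^{(s)}D_\gZ$ rather than $O((\beta^{(s)})^2)$. Feeding this into the growth argument inflates the epoch length by a factor of order $(D_\gZ/\beta^{(s)})^{(p+1)/(3p+1)}$. That is no longer $\gO((\gamma/\mu)^{2/(3p+1)})$ once $\epsilon^{(s)}$ is small.

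The paper uses a self-bounding step instead. Let $K$ be the index maximizing $\Vert\vv_j-\vz^*\Vert$. Telescoping gives $\frac12\Vert\vv_K-\vz^*\Vert^2\le(\frac12+3c^2)(\beta^{(s)})^2+c\beta^{(s)}\Vert\vv_K-\vz^*\Vert$. Hence $\Vert\vv_t-\vz^*\Vert\le(1+5c)\beta^{(s)}$ for all $t$, and the total error is at most $c(1+8c)(\beta^{(s)})^2$.

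For the same reason, the potential must be anchored at the radius $\beta^{(s)}$, not at $\Vert\vz_0-\vz^*\Vert$. The latter may be arbitrarily small, while the error budget may not. Correspondingly, the quantity $\min\{D_\gZ,((p+1)\epsilon^2/(2\mu\Delta))^{1/(p+1)}\}$ in (\ref{eq:cond-delta}) is a \emph{lower} bound on $\beta^{(s)}$ over all epochs (via $2^S\le 2\Delta/\epsilon$). It is not an upper bound on $\Vert\vz^{(s)}-\vz^*\Vert$, as you suggest.

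\paragraph{Two smaller slips.}
\begin{itemize}
\item The movement bound controls $\sum_t A'_{t+1}\lambda_{t+1}^{(p+1)/(p-1)}$, not $\sum_t A'_{t+1}\lambda_{t+1}^{-2/(p-1)}$.
\item The factor $2$ in Line~\ref{Line:eq-a} compensates for the Young loss from the inexact gradient ($a_{t+1}^2\Vert\nabla h+\vu\Vert^2$ in place of $\frac12 a_{t+1}^2\Vert\vg+\vu\Vert^2$). It does not absorb the $\lambda/2$ slack, which exact OptMS already tolerates.
\end{itemize}
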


\begin{proof}
See Appendix \ref{apx:proof-AIPE} for the proof.  
\end{proof}

\begin{remark}
Some of our proof for handling inexactness is motivated by the proof of \citep[Theorem 7]{bubeck2019complexity}. But our analysis is simpler as we use the search-free  Monteiro-Svaiter acceleration~\citep{carmon2022optimal}, so we do not need to analyze the additional line search with inexact proximal oracles \citep[Section E]{bubeck2019complexity}. Besides the conciseness, another advantage of our analysis is that it can also tackle the constrained case. In contrast, 
the line search procedure \citep[Lemma 31]{bubeck2019complexity} requires the function to be twice differentiable, which is violated by the indicator function $\gI_{\gZ}$ under the constrained case.
\end{remark}


For convex optimization, both AIPE and OptMS match the lower bound for their own algorithm class. Specifically, \citet[Theorem 3]{arjevani2019oracle} proved that OptMS is an optimal $p$th-order method, while recently \citet[Theorem 1.4]{adil2024convex} proved the AIPE with $\delta=0$ is an optimal $p$th-order proximal method. Both of them will be important 
subroutines in our accelerated algorithm.

\section{The $\tilde \gO(\epsilon^{-4/(3p+1)})$ Upper Bound} \label{sec:main}


In this section, we introduce our accelerated method Minimax-AIPE for Problem~(\ref{prob:main}). First, Section \ref{subsec:reduction} shows that solving the original problem can be reduced to solving a $\mu_x$-uniformly-convex-$\mu_y$-uniformly-concave minimax problem by adding small regularization. Then, Section \ref{subsec:algo} presents the formal algorithm and proves that it can find an $\epsilon$-solution to a convex-concave minimax problem in $\tilde \gO(\epsilon^{-4/(3p+1)})$ $p$th-order oracle calls.




\subsection{A Uniformly-Convex-Uniformly-Concave Surrogate} \label{subsec:reduction}

Instead of directly optimizing the original objective,  our accelerated algorithm optimized the following uniformly-convex-uniformly-concave surrogate function 
\begin{equation} \label{eq:dfn-f-eps}
    f_{\epsilon}(\vx,\vy) := f(\vx,\vy) + \frac{\mu_x}{p+1} \Vert \vx - \vx_0 \Vert^{p+1} - \frac{\mu_y}{p+1} \Vert \vy - \vy_0 \Vert^{p+1},
\end{equation}
where we set $\mu_x = \epsilon / (4 D_\gX^p)$ and $\mu_y = \epsilon / (4D_\gY^p)$ to ensure that the gradients of the objective before and after regularization are  $(\epsilon/2)$-close to each other, as shown in the following.
\begin{lemma} \label{lem:reg-f-eps-close}
Under Assumption  \ref{asm:D} and \ref{asm:CC}, the regularized function in Eq. (\ref{eq:dfn-f-eps}) satisfies:
\begin{enumerate}
\item $f_\epsilon(\vx,\vy)$ is $(p+1)$th-order $({\mu_x}/{2^{p-1}})$-uniformly-convex-$({\mu_y}/{2^{p-1}})$-uniformly-concave.
\item Let $\mu_x = \epsilon / (4 D_\gX^p)$ and $\mu_y = \epsilon / (4D_\gY^p)$. If $(\hat \vx, \hat \vy)$ is an $(\nicefrac{\epsilon}{2})$-solution to $f_\epsilon(\vx,\vy)$ as per Definition \ref{dfn:eps-sol}, then it is an $\epsilon$-solution to $f(\vx,\vy)$.
\end{enumerate}
\end{lemma}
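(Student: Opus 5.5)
For item 1, I will split $f_\epsilon$ into the convex-concave part $f$ and the two power regularizers. Fix $\vy$. The map $\vx \mapsto f(\vx,\vy)$ is convex by Assumption~\ref{asm:CC}. The map $\vx \mapsto \frac{\mu_x}{p+1}\Vert \vx - \vx_0\Vert^{p+1} = \mu_x d_{p+1}(\vx - \vx_0)$ is, by Lemma~\ref{lem:cubic-func} (item 1, applied with exponent $p+1$), $(p+1)$th-order $\mu_x (1/2)^{p-1}$-uniformly convex, since translation preserves the defining inequality. Adding the linearization inequality of the convex function $f(\cdot,\vy)$ to the uniform convexity inequality of the regularizer gives the inequality of Definition~\ref{dfn:UC} for $f_\epsilon(\cdot,\vy)$ with modulus $\mu_x/2^{p-1}$. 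The concave side is symmetric: apply the same argument to $-f_\epsilon(\vx,\cdot)$, which gives modulus $\mu_y/2^{p-1}$. This item is routine.

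For item 2, write $\mF_\epsilon$ for the operator of $f_\epsilon$. Then
\[
\mF_\epsilon(\vz) = \mF(\vz) + \bigl(\mu_x \Vert \vx-\vx_0\Vert^{p-1}(\vx-\vx_0),\ \mu_y \Vert \vy-\vy_0\Vert^{p-1}(\vy-\vy_0)\bigr).
\]
The sign in the second block is $+$ because of the minus signs in both $f_\epsilon$ and $\mF$. Since $\vx,\vx_0 \in \gX$, the $\vx$-block has norm at most $\mu_x D_\gX^p = \epsilon/4$, and likewise the $\vy$-block has norm at most $\epsilon/4$. Hence
\[
\Vert \mF_\epsilon(\vz) - \mF(\vz)\Vert \le \sqrt{(\epsilon/4)^2 + (\epsilon/4)^2} \le \epsilon/2 .
\]
The crude triangle bound $\epsilon/4+\epsilon/4 = \epsilon/2$ also suffices.

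Now let $\hat\vz$ be an $(\epsilon/2)$-solution of $f_\epsilon$, and let $\vc \in \gN_\gZ(\hat\vz)$ attain $\Vert \mF_\epsilon(\hat\vz)+\vc\Vert \le \epsilon/2$. The minimum in Definition~\ref{dfn:eps-sol} is attained because the normal cone is closed and convex. The normal cone depends only on $\gZ$, so the same $\vc$ is admissible for $\mF$, and the triangle inequality gives
\[
r^{\rm tan}_\mF(\hat\vz) \le \Vert \mF(\hat\vz)+\vc\Vert \le \Vert \mF_\epsilon(\hat\vz)+\vc\Vert + \Vert \mF(\hat\vz)-\mF_\epsilon(\hat\vz)\Vert \le \epsilon.
\]
The only point needing care is that $\vx_0,\vy_0$ lie in the domain, so the diameter bounds apply to $\Vert\vx-\vx_0\Vert$ and $\Vert\vy-\vy_0\Vert$. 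I take this as given from the algorithm's initialization in $\gZ$.
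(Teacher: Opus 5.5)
Your proof is correct and follows the argument the paper has in mind. The paper gives no separate proof of this lemma. It justifies item~1 through the $(p+1)$th-order $(1/2)^{p-1}$-uniform convexity of $d_{p+1}$ from Lemma~\ref{lem:cubic-func}. For item~2 it only remarks that the choice $\mu_x = \epsilon/(4D_\gX^p)$, $\mu_y = \epsilon/(4D_\gY^p)$ makes $\mF$ and the regularized operator $(\epsilon/2)$-close, so the same normal-cone element plus the triangle inequality finish the job, exactly as you do. Your caveat that $(\vx_0,\vy_0)\in\gZ$ is genuinely needed for the diameter bounds on $\Vert \vx-\vx_0\Vert$ and $\Vert \vy-\vy_0\Vert$, and the paper assumes it only implicitly.
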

Therefore, after adding the above regularization, we reduce solving the original problem to solving an $\gO(\epsilon)$-uniformly-convex-$\gO(\epsilon)$-uniformly-concave function. 
The same regularization technique is also used in the development of the accelerated algorithm for $p=1$ \citep{lin2020near}. 
In the following lemma, we
generalize
\citep[Lemma B.2]{lin2020near} from $p=1$ to general $p \in \sN_+$ and show that the regularized objective 
enjoys benign properties that facilitate our analyses.
\begin{lemma} \label{lem:benigh-reg-f}
Let $\gX$ and $\gY$ be convex sets.
If the function $f(\vx,\vy)$ is $p$th-order $\mu_x$-uniformly-convex-$\mu_y$-uniformly-concave on $\gX \times \gY$,
then we have the following.
\begin{enumerate}
    \item The primal function $\Phi(\vx) = \max_{\vy \in \gY} f(\vx,\vy)$ and dual function $\Psi(\vy) = \min_{\vx \in \gX} f(\vx,\vy)$ are $p$th-order $\mu_x$-uniformly convex and $\mu_y$-uniformly concave, respectively.
    \item If there exists $L^{xy}>0$ such that $\nabla_y f(\vx,\vy)$ is $L^{xy}$-Lipschitz continuous in $\vx$ and $\nabla_x f(\vx,\vy)$ is $L^{xy}$-Lipschitz continuous in $\vy$, then the solution mappings $\vy^*(\vx) = \arg \max_{\vy \in \gY} f(\vx,\vy)$ 
    and $\vx^*(\vy) = \arg \min_{\vx \in \gX} f(\vx,\vy)$ satisfy
    \begin{align*}
        \frac{2\mu_y}{p} \Vert \vy^*(\vx_1) - \vy^*(\vx_2) \Vert^{p-1} \le& L^{xy} \Vert \vx_1  - \vx_2 \Vert, \quad \forall \vx_1,\vx_2 \in \gX;\\
        \frac{2 \mu_x}{p} \Vert \vx^*(\vy_1) - \vx^*(\vy_2) \Vert^{p-1} \le& 
        L^{xy} \Vert \vy_1  - \vy_2 \Vert, \quad \forall \vy_1,\vy_2 \in \gY;
    \end{align*}
\end{enumerate}

\end{lemma}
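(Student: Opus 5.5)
Item 1 follows from the fact that a pointwise maximum of uniformly convex functions with a common modulus keeps that modulus, with the first-order definition handled through Danskin's theorem. Item 2 comes from comparing the optimality conditions (variational inequalities) of the inner maximization at two different $\vx$, in the style of Corollary~\ref{cor:U-monotone}.

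\textbf{Item 1.} Fix $\vx,\vx'\in\gX$ and set $\vy' = \vy^*(\vx')$. The maximizer is unique because $f(\vx',\cdot)$ is uniformly concave on the compact convex set $\gY$. Uniform convexity of $f(\cdot,\vy')$ gives
$$\Phi(\vx)\ge f(\vx,\vy')\ge f(\vx',\vy') + \langle \nabla_x f(\vx',\vy'),\vx-\vx'\rangle + \tfrac{\mu_x}{p}\Vert \vx-\vx'\Vert^p .$$
By Danskin's theorem, which applies since the maximizer is unique, $\Phi$ is differentiable with $\nabla\Phi(\vx') = \nabla_x f(\vx',\vy^*(\vx'))$. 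Substituting this into the display yields exactly Definition~\ref{dfn:UC} for $\Phi$ with modulus $\mu_x$. If one prefers to avoid differentiability, the same inequality shows $\nabla_x f(\vx',\vy')$ is a subgradient with the uniform growth term, which is all that is used later. The statement for $\Psi$ is symmetric.

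\textbf{Item 2.} Let $\vy_i=\vy^*(\vx_i)$. Optimality of each inner maximization gives $\vv_i := \nabla_y f(\vx_i,\vy_i)\in \gN_\gY(\vy_i)$, i.e. $\langle \nabla_y f(\vx_i,\vy_i), \vy-\vy_i\rangle\le 0$ for all $\vy\in\gY$. Apply this with $i=1$, $\vy=\vy_2$, and with $i=2$, $\vy=\vy_1$, and add:
$$\langle \nabla_y f(\vx_1,\vy_1)-\nabla_y f(\vx_2,\vy_2), \vy_2-\vy_1\rangle \le 0 .$$
Now insert $\pm\nabla_y f(\vx_1,\vy_2)$. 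Summing the uniform concavity inequalities of $f(\vx_1,\cdot)$ at $\vy_1$ and $\vy_2$, as in the proof of Lemma~\ref{lem:U-monotone}, gives strong monotonicity of $-\nabla_y f(\vx_1,\cdot)$:
$$\langle \nabla_y f(\vx_1,\vy_1)-\nabla_y f(\vx_1,\vy_2), \vy_2-\vy_1\rangle \ge \tfrac{2\mu_y}{p}\Vert \vy_1-\vy_2\Vert^p .$$
Combining the two displays,
$$\tfrac{2\mu_y}{p}\Vert\vy_1-\vy_2\Vert^p \le \langle \nabla_y f(\vx_2,\vy_2)-\nabla_y f(\vx_1,\vy_2),\vy_2-\vy_1\rangle \le L^{xy}\Vert \vx_1-\vx_2\Vert\,\Vert\vy_1-\vy_2\Vert .$$
The last step uses Cauchy–Schwarz and the $L^{xy}$-Lipschitzness of $\nabla_y f(\cdot,\vy_2)$ in $\vx$. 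Dividing by $\Vert\vy_1-\vy_2\Vert$ gives the claim; the case $\vy_1=\vy_2$ is trivial. The bound for $\vx^*(\cdot)$ is symmetric.

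The only delicate point is the differentiability of $\Phi$ in item 1. Here the set $\gY$ is compact and the maximizer is unique, so Danskin's theorem applies; I expect this step to need just a brief justification rather than real work.
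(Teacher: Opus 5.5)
Your proof is correct and follows essentially the same route as the paper's. For item 1 you lower-bound $\Phi(\vx)$ by $f(\vx,\vy^*(\vx'))$, apply uniform convexity in $\vx$, and use the Danskin identity $\nabla\Phi(\vx')=\nabla_x f(\vx',\vy^*(\vx'))$; for item 2 you add the two variational-inequality optimality conditions, insert the uniform monotonicity of $-\nabla_y f(\vx_1,\cdot)$ as in Lemma~\ref{lem:U-monotone}, and finish with Cauchy--Schwarz and the $L^{xy}$-Lipschitz bound. The only cosmetic difference is that you justify existence and uniqueness of the maximizer via compactness of $\gY$ (from Assumption~\ref{asm:D}, not the lemma's stated hypotheses), whereas the paper simply takes the $\arg\max$ for granted.
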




\begin{proof}
Below, we provide the proof for $\Phi(\vx)$ and $\vy^*(\vx)$, and the result for $\Psi(\vy)$ and $\vx^*(\vy)$ can be obtained by exchanging the variables $\vx$ and $\vy$.
\begin{enumerate}
    \item For any $\vx_1,\vx_2 \in \gX$, we have that
\begin{align*}
 &\quad   \Phi(\vx_1) - \Phi(\vx_2) - \langle \Phi(\vx_2), \vx_1 - \vx_2 \rangle \\
 &= f(\vx_1, \vy^*(\vx_1)) - f(\vx_2, \vy^*(\vx_2)) - \langle \nabla_x f(\vx_2, \vy^*(\vx_2)), \vx_1 - \vx_2 \rangle \\
 &\ge f(\vx_1, \vy^*(\vx_2)) - f(\vx_2, \vy^*(\vx_2)) - \langle \nabla_x f(\vx_2, \vy^*(\vx_2)), \vx_1 - \vx_2 \rangle \\
 &\ge \frac{\mu_x}{p} \Vert \vx_1 - \vx_2 \Vert^{p},
\end{align*}
where the last step uses the assumption that $f(\,\cdot\,,\vy)$ is $p$th-order $\mu_x$-uniformly convex. This proves the $p$th-order $\mu_x$-uniform convexity by definition.
    \item 
    For any $\vx_1,\vx_2 \in \gX$, the optimality condition of $\vy^*(\vx)$ indicates that
\begin{align*}
   \langle \nabla_y f(\vx_1, \vy^*(\vx_1)),  \vy - \vy^*(\vx_1) \rangle &\le 0, \quad \forall \vy \in \gY; \\
   \langle \nabla_y f(\vx_2, \vy^*(\vx_2)), \vy' - \vy^*(\vx_2) \rangle &\le 0,\quad \forall \vy' \in \gY.
\end{align*}
Plugging $\vy = \vy^*(\vx_2) $ and $\vy' = \vy^*(\vx_1)$ and summing up the above inequalities gives
\begin{equation*}
    \langle \nabla_y f(\vx_1, \vy^*(\vx_1)) - \nabla_y f(\vx_2, \vy^*(\vx_2)),  \vy^*(\vx_2) - \vy^*(\vx_1) \rangle  \le 0.
\end{equation*}
By the $\mu_y$-uniform concavity in $\vy$, we know from Lemma \ref{lem:U-monotone} that 
{\begin{equation*}
    \langle \nabla_y f(\vx_1, \vy^*(\vx_2)) - \nabla_y f(\vx_1, \vy^*(\vx_1)),  \vy^*(\vx_2) - \vy^*(\vx_1) \rangle \le - \frac{2\mu_y}{p} \Vert \vy^*(\vx_2) - \vy^*(\vx_1) \Vert^{p}.  
\end{equation*}}
Summing up the above two inequalities yields that
\begin{equation*}
   \langle \nabla_y f(\vx_1, \vy^*(\vx_2)) - \nabla_y f(\vx_2, \vy^*(\vx_2)),  \vy^*(\vx_2) - \vy^*(\vx_1) \rangle \le - \frac{2\mu_y}{p} \Vert \vy^*(\vx_2) - \vy^*(\vx_1) \Vert^{p}.
\end{equation*}
Then, we apply the $L^{xy}$-Lipschitz continuity of $\nabla_y f(\vx,\vy)$ to obtain that
\begin{equation*}
    \frac{2\mu_y}{p} \Vert \vy^*(\vx_2) - \vy^*(\vx_1) \Vert^{p} \le L^{xy} \Vert \vx_1 - \vx_2 \Vert  \Vert \vy^*(\vx_2) - \vy^*(\vx_1) \Vert,
\end{equation*}
which proves the continuity of $\vy^*(\vx)$. The result for $\vx^*(\vy)$ can be proved similarly. 
\end{enumerate}
 
\end{proof}

The continuity of solution mappings  $\vy^*(\vx)$ and $\vx^*(\vy)$ (item 2 in Lemma \ref{lem:benigh-reg-f}) is 
crucial for accelerated algorithms, but not necessary for standard (high-order) EG analysis \citep{monteiro2012iteration,bullins2022higher}. 
For this reason, our algorithm additionally requires the $L_1$-smoothness assumption (Assumption \ref{asm:grad-lip}), while the high-order EG does not need. But as we have discussed, our additional assumption is mild since the $L_1$-smoothness always holds in a compact set if the function has $L_p$-Lipschitz continuous $p$th-order derivatives. 


\subsection{The Minimax-AIPE Algorithm} \label{subsec:algo}

\begin{algorithm*}[htbp]  
\caption{Minimax-AIPE \\
{\color{gray} $\triangleright$ Outer loop: Solve $\min_{\vx \in \gX} \left\{ \Phi(\vx) := \max_{\vy \in \gY }f_\epsilon(\vx,\vy)\right\}$ with $\tilde \gO((\nicefrac{\gamma}{\mu_x})^{2/(3p+1)})$ steps.} }\label{alg:Minimax-AIPE}
\begin{algorithmic}[1] 
\renewcommand{\algorithmicrequire}{ \textbf{Parameters:}} 
\REQUIRE $\gamma, S_{1},T_{1}, \delta_1$
\STATE Initialize at any $\vx^{(0)} \in \gX$.
\FOR{$s=0,\cdots,S_{1}-1$} 
\STATE $\vv_0^{(s)} = \bar \vx_0^{(s)} = \vx^{(s)}_0 = \vx^{(s)}$, $A_0 = 0$, $\lambda_1'^{(s)} = 1$
\FOR{$t = 0,\cdots, T_{1}-1$ }  
\STATE \quad {Run OptMS-restart on $f_\epsilon(\vx_t^{(s)},\,\cdot\,)$ to obtain
$ \hat \Phi_t^{(s)} = {\rm iFunc}_\Phi(\vx_t^{(s)},\delta_1)$.}  
\STATE \quad Run OptMS-restart on $f_\epsilon(\tilde \vx_t^{(s)},\,\cdot\,)$ to obtain $\tilde \Phi_t^{(s)} = {\rm iFunc}_\Phi(\tilde \vx_t^{(s)},\delta_1)$.\\
\quad {\color{gray} $\triangleright$ Both steps require $\tilde \gO((\nicefrac{L_p}{\mu_y})^{2/(3p+1)})$ $p$th-order oracles of $f$.}
\STATE \quad Update $a_{t+1}'^{(s)}$, $A_{t+1}'^{(s)}$ as Line \ref{Line:eq-a} in Algorithm \ref{alg:ANPE-restart}.
\STATE \quad $\bar \vx_t^{(s)} = \frac{A_t^{(s)}}{A_{t+1}'^{(s)}} \vx_t^{(s)} + \frac{a_{t+1}'^{(s)}}{A_{t+1}'^{(s)}} \vv_t^{(s)} $  \\
\STATE \quad Invoke
Algorithm \ref{alg:Minimax-AIPE-mid} to obtain
$\tilde \vx_{t+1}^{(s)}, \vu_{t+1}^{(s)}= {{\rm iProx}_\Phi (\bar \vx_t^{(s)}, \gamma, \delta_1)} $. \\
\quad {\color{gray} $\triangleright$ Requires $\tilde \gO((\nicefrac{\gamma}{\mu_y})^{2/(3p+1)})$ middle-loop steps.}
\STATE \quad $\lambda_{t+1}^{(s)} = \gamma \Vert \tilde \vx_{t+1}^{(s)} - \bar \vx_t^{(s)} \Vert^{p-1}$ \\
\STATE \quad Update $\gamma_{t+1}^{(s)}$, $a_{t+1}^{(s)}$, $A_{t+1}^{(s)}$, $\lambda_{t+2}'^{(s)}$ as Line \ref{line:update-lmbd} to \ref{line:update-lmbd-end} in Algorithm \ref{alg:ANPE-restart}.
\STATE \quad $ \vx_{t+1}^{(s)} = \frac{(1- \gamma_{t+1}^{(s)}) A_t^{(s)}}{A_{t+1}^{(s)}} \vx_t^{(s)} + \frac{\gamma_{t+1}^{(s)} A_{t+1}'^{(s)}}{A_{t+1}^{(s)}} \tilde \vx_{t+1}^{(s)}$ \\
\STATE \quad {Run 
OptMS-restart on $f_\epsilon(\tilde \vx_{t+1}^{(s)}, \,\cdot\,)$ to obtain
$\vg_{t+1}^{(s)} = {\rm iGrad}_\Phi(\tilde \vx_{t+1}^{(s)},\delta_1)$.} 
\\
\quad {\color{gray} $\triangleright$ Requires $\tilde \gO((\nicefrac{L_p}{\mu_y})^{2/(3p+1)})$ $p$th-order oracles of $f$.}
\STATE \quad $\vv_{t+1}^{(s)} = {\rm Proj}_\gX (\vv_t^{(s)} - a_{t+1}^{(s)} (\vg_{t+1}^{(s)} + \vu_{t+1}^{(s)}) )$ \\
\ENDFOR \\
\STATE Select $\vx^{(s+1)}$ as the point achieving the lowest value of $\{ \hat \Phi_t^{(s)},\tilde \Phi_t^{(s)} \}_{t=0}^{T-1}$. \\
\ENDFOR \\
\STATE Run OptMS-restart to find $\hat \vy$ that approximately solves $\max_{\vy \in \gY }f_\epsilon(\vx^{(S_1)},\vy)$.
\STATE {\color{gray} $\triangleright$ Take an additional gradient step to make gradient small.}
\STATE $\tilde L_1 = L_1 + p \max\{ \mu_x D_\gX^{p-1}, \mu_y D_\gY^{p-1} \}$
\STATE Let $\hat \vz = (\vx^{(S_1)},\hat \vy)$ and update $\tilde \vz = {\rm Proj}_\gZ(\hat \vz - (1/\tilde L_1) \mF(\hat \vz))$ 
\RETURN $\tilde \vz$
\end{algorithmic}
\end{algorithm*}

We introduce a triple-looped algorithm Minimax-AIPE to find an approximate solution to the uniformly-convex-uniformly-concave function $f_\epsilon(\vx,\vy)$.
Below, we introduce the procedures of each loop one by one.

The \textbf{outer} loop (Algorithm \ref{alg:Minimax-AIPE}) applies the AIPE-restart (Algorithm \ref{alg:ANPE-restart}) introduced in Section~\ref{sec:AIPE} to minimize the primal objective $\Phi(\vx):= \max_{\vy \in \gY } f_\epsilon(\vx,\vy)$, which requires the inexact zeroth-order oracles, first-order oracles, and high-order proximal oracles of $\Phi(\vx)$. As both the inexact zeroth- and first-order oracles of $\Phi(\vx)$ are easily obtainable (see Lemma \ref{lem:get-zo-fo}), the non-trivial one is the proximal oracle, which we solve in the middle loop (Algorithm~\ref{alg:Minimax-AIPE-mid}).
According to Theorem~\ref{thm:AIPE}, if the middle loop can successfully return a sufficiently accurate proximal oracle, then the outer loop (Algorithm~\ref{alg:Minimax-AIPE}) requires $\tilde \gO\left((\nicefrac{\gamma}{\mu_x})^{2/(3p-1)} \right)  $ iterations.
In this way,
Algorithm \ref{alg:Minimax-AIPE} reduces solving the original problem $\min_{\vx \in \gX} \max_{\vy \in \gY} f_\epsilon(\vx,\vy)$ to solving $\tilde \gO\left((\nicefrac{\gamma}{\mu_x})^{2/(3p+1)} \right)$ proximal oracles $ {\rm Prox}_{\Phi}(\bar \vx, \gamma) =
    \arg \min_{\vx \in \gX} \left\{ \Phi(\vx) + \gamma \Vert \vx - \bar \vx \Vert^{p+1} / (p+1) \right\}$, 
which can be obtained by solving the following one-sided regularized minimax problem
\begin{equation} \label{prob:minimax-g}
    \min_{\vx \in \gX} \max_{\vy \in \gY} \left\{ g_\epsilon(\vx,\vy; \bar \vx) := f_\epsilon(\vx,\vy) + \frac{\gamma}{p+1} \Vert \vx - \bar \vx \Vert^{p+1} \right\}.
\end{equation}
However, solving the minimax problem (\ref{prob:minimax-g}) is still challenging.
Here, our main idea is to use the same procedure again in variable $\vy$. This leads to the \textbf{middle} loop (Algorithm \ref{alg:Minimax-AIPE-mid}), which applies AIPE-restart (Algorithm~\ref{alg:ANPE-restart}) to maximize the dual objective $\Psi(\vy;\bar \vx) := \min_{\vx \in \gX} g_\epsilon(\vx,\vy; \bar \vx)$. Consequently, the middle loop reduces solving the minimax problem (\ref{prob:minimax-g}) to solving $\tilde \gO\left( (\nicefrac{\gamma}{\mu_y})^{2/(3p+1)} \right)$  proximal  oracles ${\rm Prox}_{\Psi(\,\cdot\, , \bar \vx)} = \arg \max_{\vy \in \gY} \left\{ \Psi(\vy;\bar \vx) - \gamma \Vert \vy - \bar \vy \Vert^{p+1}/ (p+1) \right\}$, which can be obtained by
solving the following two-sided regularized minimax problem
\begin{equation} \label{prob:minimax-h}
    \min_{\vx \in \gX} \max_{\vy \in \gY} \left\{ h_\epsilon(\vx,\vy; \bar \vx,\bar \vy)  := f_\epsilon(\vx,\vy) + \frac{\gamma}{p+1} \Vert \vx - \bar \vx \Vert^{p+1} - \frac{\gamma}{p+1} \Vert \vy - \bar \vy \Vert^{p+1} \right\}. 
\end{equation}
The above subproblem is well-conditioned, which can be efficiently solved 
by the \textbf{inner} loop (Algorithm \ref{alg:Minimax-AIPE-inner}).
By applying the $p$th-order EG \citep{bullins2022higher} with a restart scheme on the subproblem (\ref{prob:minimax-h}), the inner loop only requires 
$\tilde\gO\left( (\nicefrac{L_p}{\gamma})^{2/(p+1)} \right)$ $p$th-order oracle calls under Assumption~\ref{asm:Hess-lip}. Finally, setting $\gamma = L_p$ and recalling our setting in Eq. (\ref{eq:dfn-f-eps}) that $\mu_x = \epsilon / (4D_{\gX}^p)$ and $\mu_y = \epsilon / (4 D_\gY^{p})$, we can conclude that the total complexity combining the three loops of Minimax-AIPE is
\begin{align} \label{eq:main-complexity}
\begin{split}
&\quad 
    \# \text{OuterLoop} \times  \# \text{MiddleLoop} \times  \# \text{InnerLoop} \\
&=\tilde \gO\left( \left( \frac{L_p}{\mu_x} \right)^{\frac{2}{(3p+1)}} \left( \frac{L_p}{\mu_y} \right)^{\frac{2}{(3p+1)}}  \right) = \tilde \gO\left( \left( \frac{L_p D_\gX^p}{\epsilon} \right)^{\frac{2}{(3p+1)}} \left( \frac{L_p D_{\gY}^p}{\epsilon} \right)^{\frac{2}{(3p+1)}}  \right).
\end{split}
\end{align}

\begin{algorithm*}[t]  
\caption{${\rm iProx}_{\Phi} (\bar \vx, \gamma)$ \\ {\color{gray} $\triangleright$ Middle loop: Solve $\min_{\vx \in \gX} \left\{\Psi(\vy; \bar \vx):= \max_{\vy \in \gY} g_\epsilon(\vx,\vy;\bar \vx)\right\}$ with $\tilde \gO((\nicefrac{\gamma}{\mu_y})^{2/(3p+1)})$ steps.} }\label{alg:Minimax-AIPE-mid}
\begin{algorithmic}[1] 
\renewcommand{\algorithmicrequire}{ \textbf{Parameters:}}
\REQUIRE $\gamma, S_{2}, T_{2}, \delta_2$
\FOR{$s=0,\cdots,S_{2}-1$} 
\STATE $\vv_0^{(s)} = \bar \vy_0^{(s)} = \vy^{(s)}_0 = \vy^{(s)}$, $A_0 = 0$, $\lambda_1'^{(s)} = 1$ \\
\FOR{$t = 0,\cdots, T_{2}-1$}  
\STATE \quad Run
OptMS-restart on $g_\epsilon(\,\cdot\,, \vy_t^{(s)}; \bar \vx)$ to obtain
$ \hat \Psi_t^{(s)} = {\rm iFunc}_{\Psi(\,\cdot\,; \bar \vx)}(\vy_t^{(s)},\delta_2)$.
\STATE \quad 
Run OptMS-restart on $g_\epsilon(\,\cdot\,,\tilde \vy_t^{(s)}; \bar \vx)$ to obtain
$\tilde \Psi_t^{(s)} = {\rm iFunc}_{\Psi(\,\cdot\,; \bar \vx)}(\tilde \vy_t^{(s)},\delta_2)$. \\
\quad {\color{gray} $\triangleright$ Both steps require $\tilde \gO((\nicefrac{L_p}{\gamma}+1)^{2/(3p+1)})$ $p$th-order oracles of $f$.}
\STATE \quad Update $a_{t+1}'^{(s)}$, $A_{t+1}'^{(s)}$ as Line \ref{Line:eq-a} in Algorithm \ref{alg:ANPE-restart}.
\STATE \quad $\bar \vy_t^{(s)} = \frac{A_t^{(s)}}{A_{t+1}'^{(s)}} \vy_t^{(s)} + \frac{a_{t+1}'^{(s)}}{A_{t+1}'^{(s)}} \vv_t^{(s)} $  \\
\STATE \quad Invoke Algorithm \ref{alg:Minimax-AIPE-inner} to obtain
$(\tilde \vy_{t+1}^{(s)}, \vu_{t+1}^{(s)})= {{\rm iProx}_{\Psi(\,\cdot\,;\bar \vx)} (\bar \vy_t^{(s)}, \gamma, \delta_2)} $.\\
\quad {\color{gray} $\triangleright$ Requires  $\tilde \gO((\nicefrac{L_p}{\gamma} +1)^{2/(p+1)})$ inner-loop steps.}
\\
\STATE \quad $\lambda_{t+1}^{(s)} = \gamma \Vert \tilde \vy_{t+1}^{(s)} - \bar \vy_t^{(s)} \Vert^{p-1}$ \\
\STATE \quad Update $\gamma_{t+1}^{(s)}$, $a_{t+1}^{(s)}$, $A_{t+1}^{(s)}$, $\lambda_{t+2}'^{(s)}$ as Line \ref{line:update-lmbd} to \ref{line:update-lmbd-end} in Algorithm \ref{alg:ANPE-restart}.
\STATE \quad $ \vy_{t+1}^{(s)} = \frac{(1- \gamma_{t+1}^{(s)}) A_t^{(s)}}{A_{t+1}^{(s)}} \vy_t^{(s)} + \frac{\gamma_{t+1}^{(s)} A_{t+1}'^{(s)}}{A_{t+1}^{(s)}} \tilde \vy_{t+1}^{(s)}$ \\
\STATE \quad Run OptMS-restart
on $g_\epsilon(\,\cdot\,,\tilde \vy_{t+1}^{(s)};\bar \vx)$ to obtain
$\vg_{t+1}^{(s)} = {\rm iGrad}_{\Psi(\cdot;\bar \vx)}(\tilde \vy_{t+1}^{(s)},\delta_2)$.
\\
\quad {\color{gray} $\triangleright$ Requires $\tilde \gO((\nicefrac{L_p}{\gamma} +1)^{2/(3p+1)})$ $p$th-order oracles of $f$.}
\STATE \quad $\vv_{t+1}^{(s)} =  {\rm Proj}_{\gY} (\vv_t^{(s)} - a_{t+1}^{(s)} (\vg_{t+1}^{(s)} + \vu_{t+1}^{(s)}))$ \\
\ENDFOR \\
\STATE Select $\vy^{s+1}$ as the point achieving the lowest value of $\{ \hat \Psi_t^{(s)}(\cdot,\bar \vx),\tilde \Psi_t^{(s)}(\cdot,\bar \vx) \}_{t=0}^{T-1}$. \\
\ENDFOR \\
\STATE Let $\hat \vy = \vy^{(S_2)}$ and run OptMS-restart to find $\hat \vx$ that approximately solves $\min_{\vx \in \gX } g_\epsilon(\vx, \hat \vy)$.
\STATE {\color{gray} $\triangleright$ Take an additional gradient step on $\vx$ to make gradient small.}
\STATE Let $\tilde L_1^x =  L_1 + p (\gamma+\mu_x) D_\gX^{p-1}$ and update $\tilde \vx = {\rm Proj}_\gX(\hat \vx - (1/\tilde L_1^x) \nabla_x g_\epsilon(\hat \vx, \hat \vy; \bar \vx))$ \\
\STATE $
\tilde \vu = \tilde L_1^x(\hat \vx - \tilde \vx) - \nabla_x g_\epsilon(\hat \vx, \hat \vy; \bar \vx)
$.
\RETURN  $(\tilde \vx, \tilde \vu)$
\end{algorithmic}
\end{algorithm*}

For a formal proof, we need to bound all the inexactness in solving sub-problems. Specifically, we show that, given any $\epsilon>0$, there exist constants $\zeta_1,\zeta_2,\zeta_3>0$ corresponding to the precision of Algorithm \ref{alg:Minimax-AIPE}, \ref{alg:Minimax-AIPE-mid}, and \ref{alg:Minimax-AIPE-inner} such that the algorithm can provably output an $\epsilon$-solution to Problem (\ref{prob:main}).
\begin{theorem}[Main Theorem] \label{thm:Minimax-AIPE-CC}
Let the precision $\epsilon>0$ satisfies $\epsilon / \min \{ D_\gX^{p}, D_\gY^p \} \le L_p$.
Under Assumption \ref{asm:D}, \ref{asm:CC}, \ref{asm:grad-lip}, \ref{asm:Hess-lip}, there exist parameters
\begin{align*}
  S_1, S_2, S_3 &= {\rm poly} \log (L_1,D_\gZ, \nicefrac{1}{\epsilon}), \quad \delta_1,\delta_2 = 1 {\big /} {\rm poly} \log (L_1,D_\gZ, \nicefrac{1}{\epsilon}),\\
\gamma,  M  &= \gO(L_p), \quad T_1, T_2 = \gO \left( \left( \nicefrac{L_p}{\epsilon} \right)^{2/(3p+1)} \right) , \quad T_3 = \gO(1),
\end{align*}
such that running Algorithm~\ref{alg:Minimax-AIPE} with the above parameters 
can provably find an $\epsilon$-solution to Problem~(\ref{prob:main}) with $\tilde \gO( (\nicefrac{L_p}{\gamma})^{4/(3p+1)})$  $p$th-order oracle calls following Eq. (\ref{eq:main-complexity}).
\end{theorem}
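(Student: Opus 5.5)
The proof proceeds bottom-up through the three loops, at each level checking that the inexact oracles needed by Theorem~\ref{thm:AIPE} can be produced with accuracy $1/{\rm poly}\log$. The cost is polylogarithmic because every inner solver converges linearly. First, Lemma~\ref{lem:reg-f-eps-close} reduces the task to finding an $(\epsilon/2)$-solution of $f_\epsilon$. The function $f_\epsilon$ is $(p+1)$th-order $(\mu_x/2^{p-1})$-uniformly-convex-$(\mu_y/2^{p-1})$-uniformly-concave, with $\mu_x,\mu_y=\Theta(\epsilon/D^p)$. By Lemma~\ref{lem:cubic-func}, the regularizers keep the $p$th-order derivatives $\gO(L_p)$-Lipschitz; this uses $\epsilon/\min\{D_\gX^p,D_\gY^p\}\le L_p$ and $\gamma=\gO(L_p)$. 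They also keep the gradient $\tilde L_1$-Lipschitz on the compact domain.

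\textbf{Inner loop.} The subproblem (\ref{prob:minimax-h}) is $(p+1)$th-order $\Omega(\gamma)$-uniformly-convex-uniformly-concave and has $\gO(L_p+\gamma)$-Lipschitz $p$th derivatives. I would run the $p$th-order EG of \citet{bullins2022higher} with restarts. Each epoch of $\gO(((L_p+\gamma)/\gamma)^{2/(p+1)})=\gO(1)$ steps reduces the gap by a constant factor; the distance to the saddle point is controlled through Corollary~\ref{cor:U-monotone} and Lemma~\ref{lem:solu-concept}. Hence $T_3=\gO(1)$ and $S_3={\rm poly}\log$ epochs give $\Vert \vz-\vz^*_h\Vert\le\zeta_3$.

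\textbf{Middle loop.} Here I verify the oracles for $\Psi(\cdot;\bar\vx)$.
\begin{enumerate}[label=(\roman*)]
\item The zeroth- and first-order oracles come from running OptMS-restart (Theorem~\ref{thm:ANPE}) on the $\gamma$-uniformly convex problem $\min_\vx g_\epsilon(\cdot,\vy;\bar\vx)$. This costs $\tilde\gO(1)$ calls. The gradient error is bounded via Danskin's theorem, $\nabla\Psi=\nabla_y g_\epsilon(\vx^*(\vy),\vy)$, together with $L_1$-smoothness and the accuracy of $\vx$. Function values follow directly.
\item The proximal oracle comes from the inner loop's approximate saddle point $(\vx,\vy)$ of $h_\epsilon$. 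The output $\vy$ satisfies condition (\ref{eq:cond-appr-prox}) up to an additive $\delta_2$. To see this, compare $\nabla\Psi(\vy)$ with $\nabla_y h_\epsilon$ at the inexact $\vx$. The error is bounded using Lemma~\ref{lem:benigh-reg-f}(2), which gives Hölder continuity of $\vx^*(\cdot)$, and the normal-cone vector is produced by a final projected gradient step as in Lemma~\ref{lem:gd-make-g-small}.
\end{enumerate}
Theorem~\ref{thm:AIPE} then gives $T_2=\gO((\gamma/\mu_y)^{2/(3p+1)})$ and $S_2={\rm poly}\log$. The closing steps of Algorithm~\ref{alg:Minimax-AIPE-mid} must produce a pair $(\tilde\vx,\tilde\vu)$ meeting (\ref{eq:cond-appr-prox}) for $\Phi$. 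For this I use item 1 of Lemma~\ref{lem:benigh-reg-f}: the uniform concavity of $\Psi$ converts the small function gap into distance to the optimum. Strong duality for $g_\epsilon$ then identifies $\vx^*(\hat\vy)$ with ${\rm Prox}_\Phi(\bar\vx,\gamma)$ up to a Hölder error. Finally, Lemma~\ref{lem:gd-make-g-small}, applied with $\tilde L_1^x$, shows that $\tilde\vu\in\gN_\gX(\tilde\vx)$ and that the residual is $\gO(\tilde L_1^x\zeta)$. This is at most $\delta_1$ once $\zeta_2$ is chosen polynomially small.

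\textbf{Outer loop.} The outer level repeats the same argument for $\Phi$, with $T_1=\gO((\gamma/\mu_x)^{2/(3p+1)})$. The result is $\Phi(\vx^{(S_1)})-\Phi^*\le\zeta_1$, hence $\Vert\vx^{(S_1)}-\vx^*\Vert\le(\gO(\zeta_1/\mu_x))^{1/(p+1)}$ by Lemma~\ref{lem:UC-grad-dominant}. The final $\hat\vy$ is similarly close to $\vy^*(\vx^{(S_1)})$, and by Lemma~\ref{lem:benigh-reg-f}(2) that point is close to $\vy^*$. The last projected gradient step and Lemma~\ref{lem:gd-make-g-small} then give $r^{\rm tan}\le 6\tilde L_1\Vert\hat\vz-\vz^*\Vert\le\epsilon/2$ once $\zeta_1$ is small enough. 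Multiplying the loop counts gives (\ref{eq:main-complexity}).

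The main obstacle is the error propagation in the middle loop. The inexact proximal condition must hold with an additive $\delta$ satisfying (\ref{eq:cond-delta}), but the maps $\vx^*(\cdot)$ and $\vy^*(\cdot)$ are only Hölder continuous with exponent $1/(p-1)$ and constants involving $\mu^{-1}$. My approach is to keep every tolerance of the form $\epsilon^{c}/{\rm poly}(L_1,D_\gZ)$. This makes the required precision polynomially small, while the linear rates keep the iteration overhead logarithmic.
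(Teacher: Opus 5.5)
Your proposal is correct and follows essentially the same route as the paper's proof. It is a bottom-up three-loop argument with the same components:
\begin{itemize}
\item OptMS-restart together with Danskin's theorem supplies the inexact zeroth- and first-order oracles;
\item each inexact proximal oracle is extracted from an approximate saddle point via a final projected gradient step (Lemma~\ref{lem:gd-make-g-small}) plus the H\"older continuity of the best-response maps (Lemma~\ref{lem:benigh-reg-f}, item 2);
\item uniform convexity turns function gaps into distances;
\item the three loop counts multiply to Eq.~(\ref{eq:main-complexity}).
\end{itemize}

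One step needs tightening: the inner-loop restart. Run the contraction on $\Vert \vz^{(s)} - \vz^* \Vert^{p+1}$ by applying Lemma~\ref{lem:U-monotone} to the EG regret bound evaluated at $\vz^*$, as the paper does. Your cited tools do not do this job. Corollary~\ref{cor:U-monotone} and Lemma~\ref{lem:solu-concept} only bound distance and gap \emph{by} the residual, so they cannot turn a small gap into a small distance. Moreover, the EG bound on the duality gap carries a factor $D_\gZ^2$ in place of $\Vert \vz^{(s)} - \vz^* \Vert^2$, which does not contract across restarts.
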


\begin{proof}
See Appendix \ref{apx:proof-algo} for the proof.

\end{proof}

\begin{algorithm*}[t]  
\caption{${\rm iProx}_{\Psi(\,\cdot\,; \bar \vx)} (\bar \vy, \gamma)$ \\
{\color{gray} $\triangleright$ Inner loop:
Solve $ \min_{\vx \in \gX} \max_{\vy \in \gY} h_\epsilon(\vx,\vy;\bar \vx,\bar \vy)$ with $\tilde \gO((\nicefrac{L_p}{\gamma} + 1)^{2/(p+1)})$ steps.
}
}\label{alg:Minimax-AIPE-inner}
\begin{algorithmic}[1] 
\renewcommand{\algorithmicrequire}{ \textbf{Parameters:}}
\REQUIRE $\gamma, M, S_{3}, T_{3}$
\STATE Define $\vz = (\vx,\vy)$ and $ \mF^{\gamma}(\vz) = 
    (\nabla_x h_\epsilon (\vx,\vy),
    -\nabla_y h_\epsilon (\vx,\vy))^\top
$. 
\STATE Initialize at any $\vz^{(0)} \in \gZ$. 
\FOR{$s=0,\cdots,S_3-1$}
\STATE $\vz^{(s)}_0 = \vz^{(s)}$ 
\STATE {\color{gray}$\triangleright$ Run $p$th-order EG \citep{bullins2022higher} on $h_\epsilon(\vx,\vy; \bar \vx, \bar \vy)$ in each epoch.} 
\FOR{$t=0,\cdots,T_{3}-1$} 
\STATE $\vz_{t+1/2}^{(s)} = \gT_p(\vz_t^{(s)}; \mF^{\gamma}, M)$ {\color{gray} 
\hfill 
$\triangleright$ $p$th-order tensor step.}
\STATE $\eta_t^{(s)} = p! {\big/}  ( M \Vert \vz_{t+1/2}^{(s)} - \vz_t^{(s)} \Vert^{p-1}) $ 
\STATE $\vz_{t+1}^{(s)} = {\rm Proj}_{\gZ} (\vz_t^{(s)} - \eta_t^{(s)} \mF^\gamma (\vz_{t+1/2}^{(s)}))$ {\color{gray} 
\hfill 
$\triangleright$ extragradient step.}
\ENDFOR \vspace{1mm}
\STATE $\vz^{(s+1)} = \sum_{t=0}^{T-1} \eta_t^{(s)} \vz_{t+1/2}^{(s)} {\big/} \sum_{t=0}^{T-1} \eta_t^{(s)}   $ 
\ENDFOR 
\STATE {\color{gray} $\triangleright$ Take an additional gradient step on $\vz$ to make gradient small.} 
\STATE 
$\tilde L_1^\gamma = L_1 + p \max\{(\gamma+\mu_x) D_\gX^{p-1}, (\gamma+\mu_y) D_\gY^{p-1}\}$ 
\STATE $\hat \vz = {\rm Proj}_\gZ(\vz^{(s)} - (1/\tilde L_1^\gamma) \mF^\gamma(\vz^{(s)})$ \\
\STATE $\hat \vc = \tilde L_1^\gamma(\vz^{(s)} - \hat \vz) - \mF^\gamma(\vz^{(s)})
$.
\RETURN $(\hat \vy, \hat \vv)$ {\color{gray} \hfill $\triangleright$ Define $\hat \vz = (\hat \vx, \hat \vy)$ and $\hat \vc = (\hat \vu, \hat \vv)$}
\end{algorithmic}
\end{algorithm*}

\begin{remark} \label{rmk:Catalyst}
In the proof of Theorem \ref{thm:Minimax-AIPE-CC}, we reduce solving the original problem (\ref{prob:main}) to solving $\tilde \gO\left( (\nicefrac{\gamma}{\epsilon})^{4/(3p+1)} \right)$ two-sided regularized minimax problem (\ref{prob:minimax-h}).
Therefore, our framework can be used to accelerate any algorithm $\gM$ that has 
a linear convergence on problem (\ref{prob:minimax-h}). For instance, one can also apply the $p$th-order DE \citep{lin2022perseus} to obtain the same complexity guarantee. Moreover, when $p=2$, we can
also set $\gM$ to be the lazy second-order EG 
\citep{chen2025second}
and adjust the setting of $\gamma$ to obtain an improved computational complexity than Theorem \ref{thm:Minimax-AIPE-CC} by reusing Hessian information. Interested readers can find 
more details in
our conference version~\citep[Theorem~5.7]{chen2025solving}.
\end{remark}

Our $\tilde \gO( (\nicefrac{L_p}{\epsilon})^{4/(3p+1)})$ upper bound in Theorem \ref{thm:Minimax-AIPE-CC} significantly improves the existing $\gO((\nicefrac{L_p}{\epsilon})^{2/(p+1)} )$ result achieved by the $p$th-order EG~\citep{monteiro2012iteration,bullins2022higher,huang2022approximation}.
We conjecture that our upper bound is optimal up to logarithmic factors since it is the multiplication of the optimal $(\nicefrac{L_p}{\epsilon})^{2/(3p+1)}$ complexity \citep{arjevani2019oracle} for minimizing a convex function on $\vx$ and the $(\nicefrac{L_p}{\epsilon})^{2/(3p+1)}$ complexity for maximizing a concave function on $\vy$. However, we can not rigorously prove a matching lower bound in this work.  Instead, we will provide a weaker lower bound in the next section as an initial step towards settling the optimal oracle complexity in minimax optimization.

\section{The $\Omega(\epsilon^{-2/(3p-1)})$ Lower Bound} \label{sec:lower}

Our upper bound in the previous section suggests that existing rates are not optimal. 
In this section, we provide lower bounds for the following algorithm class, which is a natural generalization of the $p$th-order algorithms from convex minimization \citep{nesterov2021inexact,nesterov2023inexact} to convex-concave minimax problems.
The assumption of $\vz_0 = (\vx_0,\vy_0) = (\vzero,\vzero)$ is without loss of generality, since it can always be ensured by changing the function to $f(\vx-  \vx_0, \vy - \vy_0)$.

\begin{dfn}[$p$th-order tensor algorithm] \label{dfn:ZR-alg}
We define the $p$th-order tensor algorithm class for minimax optimization as the set of all algorithm $\gM$ that starts with $\vz_0 = (\vx_0,\vy_0) = (\vzero, \vzero)$, and at each iteration $t$, the algorithm picks $\bar \vx_t \in {\rm Span}(\{ \vx_i \}_{i=0}^{t})$, $\bar \vy_t \in {\rm Span}(\{ \vy_i \}_{i=0}^{t})$, order $q \in \{1,\cdots,p \}$, regularization parameter $M_t$ and then takes one of the following three options:
\begin{enumerate} [label=(\Alph*)]
    \item Generate $\vx_{t+1} = \gA_q(\bar \vx_t; \nabla_x f( \,\cdot \, , \bar \vy_t), M_t  )$ and $\vy_{t+1} = \vy_t$.
    \item Generate $\vy_{t+1} = \gA_q( \bar \vy_t; -\nabla_y f( \bar \vx_t, \,\cdot \,), M_t)$ and $\vx_{t+1} = \vx_t$.
    \item Generate $(\vx_{t+1}, \vy_{t+1}) = \gA_q( \bar \vx_t, \bar \vy_t; \mF, M_t )$.
\end{enumerate}
\end{dfn}

Definition \ref{dfn:ZR-alg} increases the previous algorithm class that always takes option~(C) \citep[Assumption 3.10]{lin2022perseus} by additionally allowing alternative updates in options (A) and (B).
This generalization makes our lower bound hold not only for the previous non-accelerated algorithm based on simultaneous updates \citep{monteiro2012iteration,bullins2022higher,lin2022perseus}, but also hold for our accelerated algorithm but leverages both  simultaneous  and alternating updates.




\subsection{Existing Lower Bounds} \label{sec:diss-lower}

Recent works \citep{adil2022optimal,lin2022perseus} conjectured that the complexity bound of high-order EG can not be improved, which seems to contradict our main result. We clarify the subtleties below by discussing their results.

\citet{lin2022perseus} defined an unscaled hard instance $\bar f: \bar \gX \times \bar \gY \rightarrow \sR$ as 
\begin{align} \label{eq:lin-hard-ins}
\begin{split}
       \bar f(\vx,\vy) =& \frac{L_p}{2^{p+1} (p+1)!} \left( \sum_{i=1}^{4T-1}\left((\vx_{[i]} - \vx_{[i+1]})^p \vy_{[i]} + \left(\vy_{[i]} \right)^{p+1}  \right) + (\vx_{[4T]})^{p} \vy_{[4T]} \right. \\
    & \left. 
    + (\vx_{[4T+1]})^{p} \vy_{[4T+1]}   - \frac{1}{p(p+1)} \sum_{i=2}^{4T} (\vy_{[i]})^{p+1} - \left( \vx_{[1]} -  4T + \frac{1}{p} \right) \vy_{[1]} \right),
\end{split}
\end{align}
where the associated constraint sets $\bar \gX$ and $\bar \gY$ are 
\begin{align} \label{eq:lin-const-set}
\begin{split}
    \bar \gX &= \{ \vx \in \sR^{4T+1} : 0 \le \vx_{[i]} \le 4 T- i+1 ~{\rm and}~ \vx_{[i+1]} \le \vx_{[i]}, ~ \forall i = 1,\cdots, 4T+1 \}, \\
    \bar \gY &= \{ \vy \in \sR^{4T+1} : 0 \le \vy_{[i]} \le 1,~\forall i = 1,\cdots,4 T ~~ {\rm and} ~~ \vy_{[4T+1]} = 0  \}.
\end{split}
\end{align}
Based on this instance, \citet{lin2022perseus} proved a lower bound for $p$th-order algorithms with simultaneous updates. By carefully examining their proof, we find their result also applies to any algorithm in Definition \ref{dfn:ZR-alg} that takes both simultaneous and alternative updates. In summary, we restate their result as follows.



\begin{theorem}[{\citet[Theorem 3.11]{lin2022perseus}}] \label{thm:Lin-LB}
Let $D_{\bar \gX} = 8 T^{3/2}$ and $ D_{\bar \gY} = T^{1/2}$ be the diameters of the constraint sets $\bar \gX$ and $\bar \gY$ defined in Eq. (\ref{eq:lin-const-set}), respectively. The hard instance $\bar f(\vx,\vy)$ defined in Eq. (\ref{eq:lin-hard-ins}) satisfies the following.
\begin{enumerate}
    \item It is convex-concave and has $L_p$-Lipschitz continuous $p$th-order derivatives on~$\bar \gX \times \bar \gY$.
    \item For any $T \in \sN_+$ and any sequence $\{ (\vx_t,\vy_t)\}_{t=0}^{T-1}$ generated by a $p$th-order algorithm in Definition \ref{dfn:ZR-alg}, we have that
    \begin{equation} \label{eq:Gap-Lin}
    {\rm Gap}_{\bar f}(\vx_T,\vy_T) \ge \frac{L_p T}{2^{p-1} (p+1)!} = \Omega \left(\frac{L_p D_{ \bar \gX} D_{\bar \gY}^p}{T^{(p+1)/2}} \right).
\end{equation}
\end{enumerate}
\end{theorem}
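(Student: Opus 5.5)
Since Theorem~\ref{thm:Lin-LB} is a restatement of \citet[Theorem 3.11]{lin2022perseus}, the plan is to re-verify their two claims on the instance $\bar f$ in Eq.~(\ref{eq:lin-hard-ins}). The new ingredient is a check that the zero-chain argument survives the enlarged algorithm class of Definition~\ref{dfn:ZR-alg}, which also allows the alternating options (A) and (B).

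\textbf{Item 1.} I would verify convex-concavity term by term. Each coupling term $(\vx_{[i]}-\vx_{[i+1]})^p\vy_{[i]}$ is convex in $\vx$ for fixed $\vy_{[i]}\ge 0$ on $\bar\gX$, because the chain constraint $\vx_{[i+1]}\le\vx_{[i]}$ makes the base nonnegative. It is linear in $\vy$. The diagonal terms $\pm(\vy_{[i]})^{p+1}$ combine into
\[
\Bigl(1-\tfrac{1}{p(p+1)}\Bigr)(\vy_{[i]})^{p+1}.
\]
The coefficient of this term enters the concavity analysis only through the overall sign convention, so I would follow Lin et al.'s bookkeeping to confirm that concavity in $\vy$ holds on $[0,1]$. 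Lipschitzness of $\nabla^p\bar f$ comes from the scaling $L_p/(2^{p+1}(p+1)!)$. The $p$th derivative of a degree-$(p+1)$ monomial is affine, with coefficients bounded by $(p+1)!$, and each coordinate appears in at most two chain links. Bounding the tensor spectral norm of the difference by the Euclidean norm therefore costs at most a factor $2^{p+1}$. The diameters follow directly: $\|\vx\|\le(\sum_i (4T)^2)^{1/2}\lesssim T^{3/2}$ and $\|\vy\|\le (4T)^{1/2}$.

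\textbf{Item 2.} This is the zero-chain argument. Define the support index $k(\vx,\vy)$ as the largest coordinate that is nonzero. Starting from $(\vzero,\vzero)$, I claim by induction that after $t$ oracle steps all iterates are supported on the first $t$ coordinates. Each tensor step $\gA_q$ in any of options (A), (B), (C) solves a subproblem whose data are derivatives of $\bar f$ at points in the current span, together with a regularizer $\|\cdot\|^{p+1}$ centered in the span. The derivative structure of $\bar f$ only links coordinate $i$ with $i+1$, and the linear term involves $\vy_{[1]}$. So the derivatives at a point supported on $[1{:}k]$ are supported on $[1{:}k+1]$, and the subproblem solution stays in that span.

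For the alternating options the argument is the same, with the added observation that freezing one block never enlarges the other block's span. For example, option (A) uses $\nabla_x f(\cdot,\bar\vy_t)$ with $\bar\vy_t$ in the old span, and this can activate at most one new $\vx$-coordinate. Activation therefore progresses at most one index per step, jointly across both blocks.

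Given this support bound, with $t=T\le 4T$ the coordinates $\vx_{[T+1:4T+1]}$ and $\vy_{[T+1:]}$ are still zero. I would then lower bound $\mathrm{Gap}$ by exhibiting explicit $\vy'$ and $\vx'$. For $\vy'$, set $\vy'_{[1]}$ to exploit the linear term $-(\vx_{[1]}-4T+1/p)\vy_{[1]}$, which is large because $\vx_{[1]}\le T$ in the span. For $\vx'$, take a chain that decreases by one per coordinate. Evaluating gives a gap of order $L_pT/(2^{p-1}(p+1)!)$. Rewriting in terms of $D_{\bar\gX}D_{\bar\gY}^p/T^{(p+1)/2}$ with the stated diameters gives Eq.~(\ref{eq:Gap-Lin}).

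\textbf{Main obstacle.} I expect the hardest step to be the invariance claim: showing that the tensor subproblem of option (A) or (B), which may be solved with constraints $\bar\gX$ or $\bar\gY$ present, cannot jump more than one coordinate. I would first argue that projecting onto the ordered box $\bar\gX$ preserves trailing zeros, since $\vx_{[i+1]}\le\vx_{[i]}$ and $\vx\ge 0$. I would then invoke Lin et al.'s span lemma, applied blockwise.
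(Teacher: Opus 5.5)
Your support (zero-chain) argument for options (A)--(C) is in line with what the paper relies on. The paper gives no self-contained proof of this theorem: it cites \citet{lin2022perseus} and asserts that their argument also covers alternating updates.

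One correction there. The tensor step is defined by a variational inequality, not a projection, so ``projection onto $\bar\gX$ preserves trailing zeros'' is not the operative mechanism. What works is the argument the paper uses for its own instance in Lemma~\ref{lem:pro-hard-ins} (item 2). Suppose the center is supported on the first $k$ coordinates. Then for every index $j\ge k+2$, in either block, the $j$th component of the order-$(q-1)$ Taylor model vanishes identically in $\vz$. This is because there $\partial_j\bar f$ is homogeneous of degree $p>q-1$ in variables that are zero at the center. Now zero out the positive coordinate of largest index; this stays feasible for the ordered set $\bar\gX$ and the box $\bar\gY$. Testing the inequality with this point leaves only the strictly positive regularizer term, which is a contradiction.

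The genuine gaps are in the two places where you defer. In item 1, your own bookkeeping refutes the claim. With $\bar f$ exactly as displayed in Eq.~(\ref{eq:lin-hard-ins}), the net coefficient of $(\vy_{[i]})^{p+1}$ is $1$ for $i=1$ and $1-\frac{1}{p(p+1)}>0$ for $2\le i\le 4T-1$. The coupling terms are linear in $\vy$, so $\bar f(\vx,\cdot)$ is strictly convex in these coordinates on $[0,1]$. No ``sign convention'' changes that. Concavity cannot be confirmed from the displayed formula; you first have to recover the correct signs of these terms from the original construction.

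More seriously, the final estimate in item 2 rests on a false step. The support bound gives $\vx_{[T+1]}=\cdots=\vx_{[4T+1]}=0$, but it says nothing about the size of $\vx_{[1]}$, which is only constrained by $\vx_{[1]}\le 4T$. For instance, take $p\ge 2$ and one option-(C) step of order $q=2$ from the origin. All model components except the first ones vanish, and on the first coordinate the step sees exactly the operator of the bilinear function $-(\vx_{[1]}-4T+\frac1p)\vy_{[1]}$, up to the normalization. With small $M_0$ it returns $\vx_{[1]}$ arbitrarily close to $4T-\frac1p$. So ``$\vx_{[1]}\le T$ in the span'' is wrong.

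The bound therefore has to come from a trade-off you never write down:
\begin{itemize}
\item Either $\vx_{[1]}$ is well below $4T$, and taking $\vy'_{[1]}=1$ collects the linear term.
\item Or $\vx_{[1]}$ is large. Because $\vx_{[T+1]}=0$, the $T$ nonnegative increments $\vx_{[i]}-\vx_{[i+1]}$, $i\le T$, sum to $\vx_{[1]}$. By convexity, $\sum_{i\le T}(\vx_{[i]}-\vx_{[i+1]})^p\ge T(\vx_{[1]}/T)^p$, which taking $\vy'_{[i]}=1$ collects. This is the same Jensen step the paper uses in Lemma~\ref{lem:pro-hard-ins}, item 3.
\end{itemize}

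You also need an explicit upper bound on $\min_{\vx'}\bar f(\vx',\vy_T)$ that exploits the support of $\vy_T$. ``Take a chain that decreases by one'' names a candidate but gives no estimate. Without both halves, you have not shown that the unnormalized bracket has gap at least $4T$, which is exactly what the constant $L_pT/(2^{p-1}(p+1)!)$ requires. As written, item 2 is asserted rather than proved.
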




At first glance, the above 
$\Omega\left( D_{\bar \gX} D_{\bar \gY}^p / T^{(p+1)/2}\right)$
lower bound 
matches the
$\gO\left( D_{\bar \gZ}^{p+1} / T^{(p+1)/2}\right)$
upper bound achieved by $p$th-order EG \citep[Theorem 4.2]{bullins2022higher} on the $T$ dependency, where $D_{\bar \gZ}$ is the diameter of the set $\bar \gZ = \bar \gX \times \bar \gY$.
However, 
in the above lower bound construction,
the diameters $D_{\bar \gZ}$, $D_{\bar \gX}$, and $D_{\bar \gY}$, implicitly depends on $T$. And
because $D_{\bar \gZ}, D_{\bar \gX} = \Omega(T^{3/2})$ and $D_{\bar \gY} = T^{1/2}$, the upper bound of $p$th-order EG is $\gO(T^{p+1})$, while the lower bound~in Eq. (\ref{eq:Gap-Lin}) is only $\Omega(T)$. 
To give a lower bound for optimizing a convex-concave function $f: \gZ \rightarrow \sR$ such that $\gZ$ has a $T$-independent diameter $D_{\gZ}>0$, we need to rescale the above function and its associated domain as
\begin{equation*}
    f(\vx,\vy) = \bar f(\beta\vx, \beta \vy) / \beta^{p+1}
    , \quad \gZ = \bar \gZ / \beta, \quad {\rm where} ~~ \beta = \frac{\max\{D_{\bar \gX}, D_{\bar \gY} \}}{\sqrt{2} D_\gZ} = \frac{4 \sqrt{2} T^{3/2}}{D_\gZ},
\end{equation*}
where $\bar \gZ / \beta$ is the shorthand for the set $\{ \vz \in \sR^{4T+1} :  \beta \vz \in \bar \gZ \}$.
The above rescaling ensures that $f: \gZ \rightarrow \sR$ still has $L_p$-Lipschitz continuous $p$th-order derivatives on the rescaled domain $\gZ$.
From Eq.~(\ref{eq:Gap-Lin}), we can obtain the following lower bound of 
\begin{equation*}
    {\rm Gap}_{f}(\vx_T,\vy_T) \ge \frac{L_p T}{\beta^{p+1} 2^{p-1} (p+1)!} = \Omega \left(
    \frac{L_p D_\gZ^{p+1}}{T^{(3p+1)/2}} \right),
\end{equation*}
which degenerates to the same $\Omega( \epsilon^{-2/(3p+1)})$ lower bound derived from convex minimization \citep{arjevani2019oracle}. The cause of this degeneration is the asymmetry of the diameters $D_{\bar \gX}$ and $D_{\bar \gY}$ in the construction of Theorem \ref{thm:Lin-LB}, since the rescale factor~$\beta$ has to depend on their maximum rather than the minimum, to remain the same Lipschitz constant of the $p$th-order derivatives.



\subsection{Our Lower Bound} 

In this section, we give a new lower bound construction with symmetric diameters in $\vx$ and $\vy$ to fix the aforementioned issue in \citep{adil2022optimal,lin2022perseus}. 
We construct the unscaled hard instance $f: \bar \gX \times \bar \gY \rightarrow \sR$ as 
\begin{equation} \label{eq:our-hard-ins}
    \bar f(\vx,\vy) =  \frac{L_p}{2^{p+1} p!}\left(\vy_{[1]} (1- \vx_{[1]})^p + \sum_{i=1}^{T} \vy_{[i+1]} (\vx_{[i]} - \vx_{[i+1]})^p \right),
\end{equation}
where the associated constraint sets $\bar \gX$ and $\bar \gY$ are 
\begin{align*}
    \bar \gX &= \{ \vx \in \sR^{T+1} : 0 \le \vx_{[T+1]} \le \vx_{[T]} \le \vx_{[1]} \le 1 \}, \\
    \bar \gY &= \{ \vy \in \sR^{T+1} : 0 \le \vy_{[i]} \le 1,~~ \forall i = 1,\cdots,T+1  \}.
\end{align*}
We summarize the properties of our construction in the following lemma. 
\begin{lemma} \label{lem:pro-hard-ins}
The function $\bar f(\vx,\vy)$ defined in Eq. (\ref{eq:our-hard-ins}) satisfies the following properties.
\begin{enumerate}
    \item It is convex-concave
    and has $L_p$-Lipschitz continuous $p$th-order derivatives on $\bar \gX \times \bar \gY$.
    \item For any algorithm $\gM$ in Definition \ref{dfn:ZR-alg} running on $\bar f(\vx,\vy)$ within $t$ iterations, we have
    \begin{equation} \label{eq:zero-respect}
        {\rm supp}(\vx_{t}) \subseteq \{1, 2,\cdots, t \} \quad \text{and} \quad  {\rm supp}(\vy_{t}) \subseteq \{1, 2,\cdots, t \}.
    \end{equation}
    \item  
    Let $\bar \mF = (\nabla_x \bar f, - \nabla_y \bar f)^\top$. For any $\vz = (\vx,\vy) \in \bar \gX \times \bar \gY$  with $ \vx_{[T+1]} = \vy_{[T+1]} = 0 $, 
    \begin{equation*}
        r_{\bar \mF}^{\rm tan}(\vz) \ge \frac{{\rm Gap}_{\bar f}(\vz)}{D_{\bar \gZ}} \ge   \frac{ D_{\bar \gZ}^{p}}{2^{(p+1)/2} (T+1)^{(3p-1)/2}},
    \end{equation*}
    where $D_{\bar \gZ} = \sqrt{2(T+1)}$ is the diameter of constraint set $\bar \gZ$.
\end{enumerate}
\end{lemma}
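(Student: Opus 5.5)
The plan is to verify the three items separately. Items 1 and 3 are direct computations on the explicit polynomial \eqref{eq:our-hard-ins}. Item 2 is an induction on the iteration counter, in the usual style of zero-respecting lower bounds.

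\emph{Item 1.} On $\bar \gX$ every base $1-\vx_{[1]}$ and $\vx_{[i]}-\vx_{[i+1]}$ is nonnegative, since the coordinates are ordered and bounded by $1$. Hence each map $\vx \mapsto (\cdot)^p$ is a convex function of an affine form. Multiplying by $\vy_{[i]}\ge 0$ keeps it convex in $\vx$, and each term is linear in $\vy$, so $\bar f$ is convex-concave on $\bar \gX\times\bar\gY$.

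For smoothness, $\bar f$ is a polynomial of degree $p+1$. It therefore suffices to bound the spectral norm of the constant tensor $\nabla^{p+1}\bar f$, and by symmetry it is enough to bound its diagonal form. Write a direction as $\vh=(\vh_x,\vh_y)$ and $u_i(\vh)$ for the affine-difference forms. Then
\begin{equation*}
\nabla^{p+1}\bar f[\vh]^{p+1} = \frac{L_p (p+1)!}{2^{p+1}p!}\Big(\vh_{y,[1]}(-\vh_{x,[1]})^p + \sum_{i=1}^T \vh_{y,[i+1]} u_i(\vh)^p\Big).
\end{equation*}
I would bound this sum by $\Vert \vh_y\Vert\,(\sum_i u_i^{2p})^{1/2}\le \Vert\vh_y\Vert\,\Vert u\Vert^p$, using Cauchy–Schwarz and $\Vert u\Vert_{2p}\le\Vert u\Vert_2$. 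Next, $\Vert u\Vert\le 2\Vert\vh_x\Vert$. Finally, $a b^p$ is maximized under $a^2+b^2\le 1$.

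Tracking these constants against $L_p(p+1)/2^{p+1}$ is where I expect the most friction. If the crude bound loses a factor, I would refine it by using that each $\vh_{x,[i]}$ enters only two consecutive differences, so $\sum_i u_i^2\le 4\Vert \vh_x\Vert^2$ can be sharpened.

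\emph{Item 2.} I would prove the following by induction on $t$: after $t$ iterations, ${\rm supp}(\vy_t)\subseteq\{1,\dots,t\}$ and ${\rm supp}(\vx_t)\subseteq\{1,\dots,t-1\}$, so $\vx$ lags $\vy$ by one step. This implies \eqref{eq:zero-respect}. The mechanism is read off from the partial derivatives:
\begin{itemize}
\item $\partial_{\vy_{[1]}}\bar f=c(1-\vx_{[1]})^p$ is nonzero even at the origin;
\item $\partial_{\vy_{[k+1]}}\bar f=c(\vx_{[k]}-\vx_{[k+1]})^p$ vanishes unless $\vx_{[k]}\neq 0$;
\item $\partial_{\vx_{[i]}}\bar f$ involves only $\vy_{[i]}$ and $\vy_{[i+1]}$, and is zero when both vanish.
\end{itemize}
The same pattern holds for all higher derivatives at points with the inductive support. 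So the Taylor model used by the tensor step in each option (A), (B), (C) depends only on the coordinates allowed by the inductive hypothesis, plus one new coordinate.

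The key technical point is to show that the solution of the regularized tensor subproblem \eqref{eq:Tensor-step} can be taken inside this span. For this I would use the coordinate structure of $\bar\gX,\bar\gY$. Setting the inactive coordinates to zero is feasible, and the model plus the regularizer $\Vert\cdot\Vert^{p+1}$ does not decrease when those coordinates are moved away from zero. Uniqueness of the strongly monotone subproblem solution then places the output in the span.

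\emph{Item 3.} The first inequality is Lemma \ref{lem:solu-concept}. For the gap, set $c=L_p/(2^{p+1}p!)$.
\begin{itemize}
\item Taking $\vy'=\mathbf 1$ gives $\max_{\vy'}\bar f(\vx,\vy')\ge c\big((1-\vx_{[1]})^p+\sum_{i=1}^T(\vx_{[i]}-\vx_{[i+1]})^p\big)$. Because $\vx_{[T+1]}=0$, the $T+1$ nonnegative bases telescope to $1$. By the power-mean (Jensen) inequality this sum is at least $(T+1)^{1-p}$.
\item Taking $\vx'=\mathbf 1\in\bar\gX$ makes every term vanish, so $\min_{\vx'}\bar f(\vx',\vy)\le 0$.
\end{itemize}
Hence ${\rm Gap}_{\bar f}(\vz)\ge c\,(T+1)^{1-p}$. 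With $D_{\bar\gZ}=\sqrt{2(T+1)}$, the target quantity $D_{\bar\gZ}^{p+1}/(2^{(p+1)/2}(T+1)^{(3p-1)/2})$ equals exactly $(T+1)^{1-p}$. So item 3 follows up to the normalizing constant $c$, and I would reconcile that constant with the $L_p$ normalization chosen in item 1.
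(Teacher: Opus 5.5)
Your items 1 and 3 are in line with the paper. For convexity you compose $u\mapsto u^p$ with the nonnegative affine map $\vx\mapsto\mB\vx+\ve_1$. For the gap you test $\vy'=\vone$ and $\vx'=\vone$ and apply Jensen to the telescoping bases. You are right that this gives ${\rm Gap}_{\bar f}(\vz)\ge c\,(T+1)^{1-p}$ with $c=L_p/(2^{p+1}p!)$, so the printed bound in item 3 omits that factor; the paper's own computation produces the same constant.

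The genuine gap is in item 2: your invariant ${\rm supp}(\vx_t)\subseteq\{1,\dots,t-1\}$ is false for $p\ge 2$. Your bullets only list first partial derivatives. The mixed derivative $\partial_{\vx_{[i]}}\partial_{\vy_{[i]}}\bar f=-cp\,(\vx_{[i-1]}-\vx_{[i]})^{p-1}$ (with the convention $\vx_{[0]}:=1$) is nonzero at points with your inductive support, and option (C) with $q\ge 2$ sees it through $\nabla\bar\mF(\bar\vz_t)$.

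Concretely, take $t=0$ and $q=2$. The tensor-step operator is $\bigl(-cp\,\vy_{[1]}\ve_1,\,(cp\,\vx_{[1]}-c)\ve_1\bigr)+\frac{M}{2}\Vert\vz\Vert\vz$. Suppose a solution had $\vx_{[1]}=0$. The variational inequality in the $\vx_{[1]}$-coordinate then forces $-cp\,\vy_{[1]}\ge 0$, hence $\vy_{[1]}=0$. But then the $\vy_{[1]}$-component equals $-c<0$ at the boundary $\vy_{[1]}=0$, a contradiction. So $(\vx_1)_{[1]}>0$ for every $M>0$, and your induction breaks at the first step. In general $\vx_{[i]}$ and $\vy_{[i]}$ activate in the same iteration.

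The repair is the paper's symmetric invariant $(\vx_t,\vy_t)\in\sR^{T+1}_t\times\sR^{T+1}_t$. Every $\vx$- or $\vy$-entry of $\bar\mF$ with index at least $t+2$ is a sum of degree-$p$ monomials in quantities ($\vy_{[i]}$, $\vy_{[i+1]}$, $\vx_{[k]}-\vx_{[k+1]}$) that vanish at $\bar\vz_t$. Hence all its Taylor terms of order at most $q-1\le p-1$ vanish in every direction, which still yields the stated supports.

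Two smaller points.

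First, your subproblem step relies on uniqueness. But Definition~\ref{dfn:ZR-alg} puts no lower bound on $M_t$, and in option (C) there is no ``model plus regularizer'' objective to compare, because the step is a variational inequality for a non-potential operator. The paper argues directly on an arbitrary solution instead. Suppose $\vz_{t+1}$ had a positive entry with index at least $t+2$ in the $\vx$- or $\vy$-block. Let $j$ be the largest positive index in that block; positive entries of a point of $\bar\gX$ form a prefix, so zeroing entry $j$ stays feasible. Test with $\vz'=\vz_{t+1}-(\vz_{t+1})_{[j]}\ve_j$. The Taylor model has zero $j$-th entry, so the inner product equals $-\frac{M_t}{q!}\Vert\vz_{t+1}-\bar\vz_t\Vert^{q-1}(\vz_{t+1})_{[j]}^2<0$. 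This contradiction holds for every solution and every $M_t>0$.

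Second, on item 1, your proposed fallback cannot work. $\Vert\mB\Vert\to 2$ (take $\vh_{x,[i]}=(-1)^i$), so $\sum_i u_i^2\le 4\Vert\vh_x\Vert^2$ is essentially tight. Meanwhile your crude chain gives $\frac{L_p}{2}\sqrt{p^p/(p+1)^{p-1}}$, which exceeds $L_p$ once $p\ge 10$. The slack is in $\Vert u\Vert_{2p}\le\Vert u\Vert_2$. Since $\Vert u\Vert_\infty\le\sqrt2\,\Vert\vh_x\Vert$, one has $\Vert u\Vert_{2p}^p\le\Vert u\Vert_\infty^{p-1}\Vert u\Vert_2\le 2^{(p+1)/2}\Vert\vh_x\Vert^p$. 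This yields the bound $L_p\sqrt{(p+1)/2^{p+1}}\le L_p$.
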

\begin{proof}
See Appendix \ref{apx:lb-proof} for the proof.  
\end{proof}
Now, to give a lower bound for optimizing a convex-concave function $f: \gZ \rightarrow \sR$ with $L_p$-Lipschitz continuous $p$th-order derivatives, such that $\gZ$ has a diameter no more than $D_\gZ$, we only need to rescale the above hard instance as well as its domain as 
\begin{equation} \label{eq:our-ins-scale}
    f(\vx,\vy) =  \bar f(\beta \vx, \beta \vy) / \beta^{p+1} , \quad  
    \gZ = \bar \gZ / \beta, \quad 
    {\rm where}~~ \beta = {D_{\bar \gZ}}/{D_{\gZ}}.
\end{equation}
Finally, we obtain the following lower bound.
\begin{theorem} \label{thm:lower}
The hard instance $f(\vx,\vy)$ defined in Eq. (\ref{eq:our-ins-scale}) satisfies the following.
\begin{enumerate}
    \item It is convex-concave and has $L_p$-Lipschitz continuous $p$th-order derivatives on $\gX \times \gY$.
    \item Let $\mF = (\nabla_x  f, - \nabla_y f)^\top$.  For any $T \in \sN_+$ and any sequence $\{ \vz_t\}_{t=0}^{T-1}$ generated by a $p$th-order algorithm $\gM$ in Definition \ref{dfn:ZR-alg}, we have that
     \begin{equation} \label{eq:our-lb}
     r^{\rm tan}_{\mF}(\vz_T) = \Omega \left(  \frac{L_p D_{\gZ}^{p}}{ (T+1)^{(3p-1)/2}} \right).
 \end{equation}
 In other words, the number of $p$th-order oracle calls for the algorithm class in Definition~\ref{dfn:ZR-alg} to find an $\epsilon$-solution is lower bounded by $\Omega( (\nicefrac{D_{\gZ}^p L_p}{\epsilon} )^{2/(3p-1)}) $.
\end{enumerate}
\end{theorem}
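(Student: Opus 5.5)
The plan is to transfer Lemma~\ref{lem:pro-hard-ins} to the rescaled instance $f(\vx,\vy)=\bar f(\beta\vx,\beta\vy)/\beta^{p+1}$ with $\beta = D_{\bar\gZ}/D_\gZ$, $D_{\bar\gZ}=\sqrt{2(T+1)}$. The domain $\gZ=\bar\gZ/\beta$ is convex and has diameter exactly $D_{\bar\gZ}/\beta = D_\gZ$.

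\textbf{Item 1.} Convex-concavity is invariant under the linear change of variables $\vz\mapsto\beta\vz$ and positive scaling. For smoothness, note that $\nabla^p f(\vz) = \beta^{p}\nabla^p\bar f(\beta\vz)/\beta^{p+1} = \nabla^p \bar f(\beta\vz)/\beta$. Hence
\begin{equation*}
\Vert\nabla^p f(\vz)-\nabla^p f(\vz')\Vert \le \frac{L_p\beta\Vert\vz-\vz'\Vert}{\beta} = L_p\Vert\vz-\vz'\Vert .
\end{equation*}

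\textbf{Zero-respecting property.} An algorithm in Definition~\ref{dfn:ZR-alg} run on $f$ corresponds exactly to one run on $\bar f$ via $\bar\vz_t=\beta\vz_t$. The Taylor models of $f$ at $\vz$ are the rescaled Taylor models of $\bar f$ at $\beta\vz$, and supports and spans are preserved under scaling; the regularization parameters $M_t$ are simply reparametrized. Consequently item 2 of Lemma~\ref{lem:pro-hard-ins} applies and gives ${\rm supp}(\vx_T),{\rm supp}(\vy_T)\subseteq\{1,\dots,T\}$. In particular $\vx_{[T+1]}=\vy_{[T+1]}=0$ at the iterate $\vz_T$.

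\textbf{Item 2.} The tangent residual transforms as follows. We have $\mF(\vz)=\bar\mF(\beta\vz)/\beta^{p}$, and $\gN_\gZ(\vz)=\gN_{\bar\gZ}(\beta\vz)$, since normal cones are cones invariant under scaling of the set. Therefore
\begin{equation*}
r^{\rm tan}_\mF(\vz_T)=\min_{\vc \in \gN_{\bar\gZ}(\beta\vz_T)}\Vert\bar\mF(\beta\vz_T)/\beta^p+\vc\Vert = r^{\rm tan}_{\bar\mF}(\beta\vz_T)/\beta^p .
\end{equation*}
Applying item 3 of Lemma~\ref{lem:pro-hard-ins} at $\beta\vz_T$ yields
\begin{equation*}
r^{\rm tan}_\mF(\vz_T)\ge \frac{D_{\bar\gZ}^p}{\beta^p\, 2^{(p+1)/2}(T+1)^{(3p-1)/2}} = \frac{D_\gZ^{p}}{2^{(p+1)/2}(T+1)^{(3p-1)/2}}.
\end{equation*}
The $L_p$ factor must be tracked through the constant in Lemma~\ref{lem:pro-hard-ins}. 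Since $\bar f$ is linear in $L_p$, the residual bound scales linearly in $L_p$, which gives $\Omega(L_pD_\gZ^p/(T+1)^{(3p-1)/2})$ with constants depending only on $p$. Setting this quantity $\le\epsilon$ and solving gives $T=\Omega((L_pD_\gZ^p/\epsilon)^{2/(3p-1)})$. Each iteration uses at least one oracle call, so this is also a lower bound on the oracle complexity.

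The main obstacle is the invariance of the algorithm class under rescaling, which must hold so that item 2 of Lemma~\ref{lem:pro-hard-ins} transfers from $\bar f$ to $f$. I would check it directly: the tensor step on a scaled operator, with regularization $M_t$ rescaled by a power of $\beta$, maps to the tensor step on the original operator. The other point to check is that the lemma's bound carries the $L_p$ factor explicitly.
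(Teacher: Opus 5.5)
Your proposal is correct and follows the same route as the paper. You rescale, use item 2 of Lemma~\ref{lem:pro-hard-ins} to get $\vx_{[T+1]}=\vy_{[T+1]}=0$ at $\vz_T$, and conclude with item 3, while making explicit what the paper's three-line proof leaves implicit: $r^{\rm tan}_\mF(\vz)=r^{\rm tan}_{\bar\mF}(\beta\vz)/\beta^{p}$, the invariance of the algorithm class under rescaling (with $M_t\mapsto\beta^{p-q}M_t$), and the factor $L_p/(2^{p+1}p!)$ that the statement of item 3 omits but its proof in Appendix~\ref{apx:lb-proof} carries.
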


\begin{proof}
First, item 1 directly follows from Lemma \ref{lem:pro-hard-ins} (item 1) and the scaling in 
Eq. (\ref{eq:our-ins-scale}). Second, from  Lemma~\ref{lem:pro-hard-ins} (item 2), we know that the sequence $\{ (\vx_t,\vy_t)\}_{t=0}^T$ generated by $\gM$ satisfies $(\vx_T)_{[T+1]} = (\vy_T)_{[T+1]} = 0 $, and thus Eq. (\ref{eq:our-lb}) follows from Lemma \ref{lem:pro-hard-ins} (item~3).  
\end{proof}
The theorem shows an $\Omega (\epsilon^{-2/(3p-1)})$ lower bound holds for all $p$th-order algorithms in Definition~\ref{dfn:ZR-alg}.
Note that our hard instance also has $\gO(1)$-Lipschitz continuous $q$th-order derivatives for all $q = 1,\cdots,p$, which means the 
lower-order Lipschitzness does not change the lower bound.

When $p=1$, our lower bound is tight as the $\gO(\epsilon^{-1})$ upper bound can be achieved by the EG method \citep{korpelevich1976extragradient}. When $p \ge 2$, our lower bound still has a gap between the $\tilde \gO(\epsilon^{-4/(3p+1)})$ upper bound established in this work. 
We conjecture that our current lower bound is possibly not tight, but improving it looks challenging as we find it difficult to come up with a convex-concave function with $p$th-order derivatives besides the function $x^p y$ we used in Eq. (\ref{eq:our-hard-ins}). 
We hope our result can be an initial step towards proving a tighter lower bound in the future.

\section{Conclusion and Future Works}

In this paper, we propose a $p$th-order algorithm Minimax-AIPE for convex-concave minimax problems. Our theoretical result shows our proposed method achieves an $\tilde \gO(\epsilon^{-4/(3p+1)})$  upper complexity bound and thus improves the best-known $\gO(\epsilon^{-2/(p+1)})$ result. 
We also establish an $\Omega(\epsilon^{-2/(3p-1)})$ lower bound for all $p$th-order algorithms.

One important future direction is to further close the gap between our upper and lower bounds. Other possible directions include shaving the logarithmic factor in the upper bound by extending the existing technique from $p=1$
\citep{kovalev2022firstSC,carmon2022recapp} to all $p \in \sN_+$, and using our framework to accelerate other methods such as stochastic algorithms \citep{wang2019stochastic,zhou2019stochastic,tripuraneni2018stochastic} and quasi-Newton algorithms~\citep{jiang2023online}. 

\section*{Acknowledgments} Lesi Chen thanks Weiqiang Zheng for discussions on reference~\citep{cai2022finite} and Tianyi Lin for discussions on reference~\citep{lin2022perseus}.

\bibliographystyle{plainnat}
\bibliography{sample}
\appendix

\section{Proof of Theorem \ref{thm:AIPE}} \label{apx:proof-AIPE}

\begin{proof}

Let ${\rm Gap}_h(\vz) = h(\vz) - h(\vz^*)$. We prove that the parameter setting $T = \gO\left((\gamma/ \mu)^{2/(3p+1)}\right)$ 
ensures that, for all $s = 0,\cdots,S-1$, each epoch of the algorithm satisfies
\begin{equation} \label{eq:epoch-half}
{\rm Gap}_h(\vz^{(s+1)}) \le \frac{1}{2} {\rm Gap}_h(\vz^{(s)}) \le  \cdots \le \frac{1}{2^{s+1}} \Delta= \epsilon^{(s+1)},
\end{equation}
where $\epsilon^{(s)}:= \Delta / 2^s$ is the accuracy for $\vz^{(s)}$ to achieve. If we can prove Eq. (\ref{eq:epoch-half}), then it implies that setting $S = \left \lceil \log_2 ( \Delta / \epsilon) \right \rceil$ suffices to find ${\rm Gap}_h(\vz^S) \le \epsilon$.
Below, we analyze epoch $s$ to show that Eq. (\ref{eq:epoch-half}) holds.
Following the notations in \citep{carmon2022optimal}, we define $E_t^{(s)}:= {\rm Gap}(\vz_t^{(s)})$, $R_t^{(s)}:=  \Vert \vv_t^{(s)} - \vz^* \Vert^2/2$ and $N_{t+1}^{(s)}:=\Vert \tilde \vz_{t+1}^{(s)} - \bar \vz_t^{(s)} \Vert^2/2$.
By the update rule of $\vv_t^{(s)}$, we have that
\begin{align} \label{eq:MS-1}
\begin{split}
    R_{t+1}^{(s)} &=\frac{1}{2} \left \Vert {\rm Proj}_{\gZ} (\vv_t^{(s)} - a_{t+1}^{(s)}  (\vg_{t+1}^{(s)} + \vu_{t+1}^{(s)})) - \vz^* \right \Vert^2 \\
    &\le \frac{1}{2} \left \Vert (\vv_t^{(s)} - a_{t+1}^{(s)} (\vg_{t+1}^{(s)} + \vu_{t+1}^{(s)})) - \vz^* \right \Vert^2 \\
    &= R_t^{(s)} + a_{t+1}^{(s)} \langle \vg_{t+1}^{(s)} + \vu_{t+1}^{(s)}, \vz^* - \vv_t^{(s)} \rangle + \frac{(a_{t+1}^{(s)})^2}{2} \Vert \vg_{t+1}^{(s)} + \vu_{t+1}^{(s)} \Vert^2 \\
    &\le  R_t^{(s)} + a_{t+1}^{(s)} \langle  \nabla h(\tilde \vz_{t+1}^{(s)}) + \vu_{t+1}^{(s)}, \vz^* - \vv_t^{(s)} \rangle + a_{t+1}^{(s)} \delta \Vert \vv_t^{(s)} - \vz^* \Vert \\
    &\quad + (a_{t+1}^{(s)})^2 \Vert \nabla h(\tilde \vz_{t+1}^{(s)}) + \vu_{t+1}^{(s)} \Vert^2   + (a_{t+1}^{(s)})^2 \delta^2,
\end{split}
\end{align}
where the first inequality uses the non-expansiveness of the projection operator, and the second one uses both the triangle inequality and Young's inequality.
Next, by the update rule of $\bar \vz_t^{(s)} = \frac{A_t^{(s)}}{A_{t+1}'^{(s)}} \vz_t^{(s)} + \frac{a_{t+1}'^{(s)}}{A_{t+1}'^{(s)}} \vv_t^{(s)} $ and $A_{t+1}'^{(s)} = A_t^{(s)} + a_{t+1}'^{(s)}$, we  have
\begin{equation*}
    a_{t+1}'^{(s)} \vv_t^{(s)}= A_{t+1}'^{(s)} \bar \vz_t^{(s)}  - A_t^{(s)} \vz_t^{(s)} = a_{t+1}'^{(s)} \tilde \vz_{t+1}^{(s)} - A_t^{(s)} (\vz_t^{(s)} - \tilde \vz_{t+1}^{(s)}) + A_{t+1}'^{(s)} (\bar \vz_t^{(s)} - \tilde \vz_{t+1}^{(s)}) .
\end{equation*}
Then subtracting $a_{t+1}'^{(s)} \vz^*$ and taking inner product with $ \nabla h(\tilde \vz_{t+1}^{(s)}) + \vu_{t+1}^{(s)} $ yields
\begin{align}\label{eq:to-plug}
\begin{split} 
    &\quad a_{t+1}'^{(s)} \langle \nabla h(\tilde \vz_{t+1}^{(s)}) + \vu_{t+1}^{(s)}, \vz^* - \vv_t^{(s)} \rangle \\
&= \langle \nabla h(\tilde \vz_{t+1}^{(s)}) + \vu_{t+1}^{(s)}, a_{t+1}'^{(s)}  ( \vz^*-  \tilde \vz_{t+1}^{(s)}) + A_t^{(s)} (\vz_t^{(s)} - \tilde \vz_{t+1}^{(s)})  + A_{t+1}'^{(s)} (\tilde \vz_{t+1}^{(s)}- \bar \vz_t^{(s)}) \rangle  \\
&\le a_{t+1}'^{(s)} ( h(\vz^*) - h(\tilde \vz_{t+1}^{(s)})  )  + A_t^{(s)} (h(\vz_t^{(s)}) - h(\tilde \vz_{t+1}^{(s)})) + A_{t+1}'^{(s)} \langle \nabla h(\tilde \vz_{t+1}^{(s)}) + \vu_{t+1}^{(s)}, \tilde \vz_{t+1}^{(s)} - \bar \vz_t^{(s)} \rangle \\
 &\le a_{t+1}'^{(s)} ( h(\vz^*) - h(\tilde \vz_{t+1}^{(s)})  )  + A_t^{(s)} (h(\vz_t^{(s)}) - h(\tilde \vz_{t+1}^{(s)})) - \frac{A_{t+1}'^{(s)}}{2 \lambda_{t+1}^{(s)}} \Vert \nabla h(\tilde \vz_{t+1}^{(s)}) + \vu_{t+1}^{(s)} \Vert^2 \\
    &\quad + \frac{A_{t+1}'^{(s)}}{2 \lambda_{t+1}^{(s)}} \Vert \nabla h(\tilde \vz_{t+1}^{(s)}) + \vu_{t+1}^{(s)} + \lambda_{t+1}^{(s)} (\tilde \vz_{t+1}^{(s)} - \bar \vz_t^{(s)}) \Vert^2  - \frac{A_{t+1}'^{(s)} \lambda_{t+1}^{(s)}}{2} \Vert \tilde \vz_{t+1}^{(s)} - \bar \vz_t^{(s)} \Vert^2,
\end{split}
\end{align}
where the last step uses the convexity of $h$ and $\vu_{t+1}^{(s)} \in \gN_{\gZ}(\tilde \vz_{t+1}^{(s)})$ to upper-bound the first two terms and upper-bounds the inner product in the last term
using the following identity. 
\begin{align*}
\langle \nabla h(\tilde \vz_{t+1}^{(s)}) + \vu_{t+1}^{(s)}, \tilde \vz_{t+1}^{(s)} - \bar \vz_t^{(s)} \rangle 
     &=\frac{1}{2 \lambda_{t+1}^{(s)}} \Vert \nabla h(\tilde \vz_{t+1}^{(s)}) + \vu_{t+1}^{(s)} + \lambda_{t+1}^{(s)} (\tilde \vz_{t+1}^{(s)} - \bar \vz_t^{(s)}) \Vert^2 \\
     &\quad - \frac{1}{2 \lambda_{t+1}^{(s)}} \Vert \nabla h(\tilde \vz_{t+1}^{(s)}) + \vu_{t+1}^{(s)} \Vert^2  - \frac{\lambda_{t+1}^{(s)}}{2} \Vert \tilde \vz_{t+1}^{(s)} - \bar \vz_t^{(s)} \Vert^2.
\end{align*}
Then, using the update of $\tilde \vz_{t+1}^{(s)}, \vu_{t+1}^{(s)} = {\rm iProx}_h (\bar \vz_t^{(s)}, \gamma, \delta)$, we have 
\begin{align} \label{eq:MS-2}
\begin{split}
 a_{t+1}'^{(s)} \langle \nabla h(\tilde \vz_{t+1}^{(s)}) + \vu_{t+1}^{(s)}, \vz^* - \vv_t^{(s)} \rangle \le& a_{t+1}'^{(s)} ( h(\vz^*) - h(\tilde \vz_{t+1}^{(s)})  )  + A_t^{(s)} (h(\vz_t^{(s)}) - h(\tilde \vz_{t+1}^{(s)})) \\
    & - \frac{A_{t+1}'^{(s)}}{2 \lambda_{t+1}^{(s)} } \Vert \nabla h(\tilde \vz_{t+1}^{(s)}) + \vu_{t+1}^{(s)} \Vert^2 - \frac{1}{2} \lambda_{t+1}^{(s)} A_{t+1}'^{(s)}  N_{t+1}^{(s)} + \frac{A_{t+1}'^{(s)}}{\lambda_{t+1}^{(s)}} \delta^2,
\end{split}
\end{align}
where the last step uses the approximate proximal condition (\ref{eq:cond-appr-prox}) and the Young's inequality.
Now we separately consider the two cases $\lambda_{t+1}^{(s)} \le \lambda_{t+1}'^{(s)}$ (\textit{Case I}) and $\lambda_{t+1}^{(s)} > \lambda_{t+1}'^{(s)}$ (\textit{Case II}). In the first case, we have that $\vz_{t+1}^{(s)} = \tilde \vz_{t+1}^{(s)} $, $a_{t+1}^{(s)} = a_{t+1}'^{(s)}$, $A_{t+1}^{(s)} = A_{t+1}'^{(s)}$ and $A_{t+1}^{(s)} = 2 \lambda_{t+1}'^{(s)} (a_{t+1}^{(s)})^2$. Then we can directly sum up Eq. (\ref{eq:MS-1}) and Eq. (\ref{eq:MS-2}) to obtain that
\[
\textit{Case I:} \ A_{t+1}^{(s)} E_{t+1}^{(s)} + R_{t+1}^{(s)} + \frac{1}{2} \lambda_{t+1}^{(s)} A_{t+1}^{(s)} N_{t+1}^{(s)}  \le A_t^{(s)} E_t^{(s)} + R_t^{(s)} + a_{t+1}^{(s)} \delta \Vert \vv_t^{(s)} - \vz^* \Vert + 3 (a_{t+1}^{(s)})^2 \delta^2.
\]
In the second case, we have $A_{t+1}^{(s)} = (1-\gamma_{t+1}^{(s)}) A_t^{(s)} + \gamma_{t+1}^{(s)} A_{t+1}'^{(s)}$, where $\gamma_{t+1}^{(s)}= \lambda_{t+1}'^{(s)} / \lambda_{t+1}^{(s)}$. Therefore, by the convexity of $h$, we have 
\begin{equation*}
    A_{t+1}^{(s)} E_{t+1}^{(s)} \le (1- \gamma_{t+1}^{(s)}) A_t^{(s)} E_t^{(s)} + \gamma_{t+1}^{(s)} A_{t+1}'^{(s)} (h(\tilde \vz_{t+1}^{(s)}) - h(\vz^*)).
\end{equation*}
We use Eq. (\ref{eq:MS-2}) multiplied by $\gamma_{t+1}^{(s)}$ to give an upper bound of the above inequality as
\begin{align} \label{eq:MS-3}
\begin{split}
       A_{t+1}^{(s)} E_{t+1}^{(s)} &\le A_t^{(s)} E_t^{(s)} + a_{t+1}^{(s)} \langle \nabla h(\tilde \vz_{t+1}^{(s)}) + \vu_{t+1}^{(s)}, \vz^* - \vv_t^{(s)} \rangle \\
    &- \frac{1}{2} \lambda_{t+1}'^{(s)} A_{t+1}'^{(s)} N_{t+1}^{(s)} - \frac{\gamma_{t+1}^{(s)} A_{t+1}'^{(s)}}{2 \lambda_{t+1}^{(s)}} \Vert \nabla h(\tilde \vz_{t+1}^{(s)}) + \vu_{t+1}^{(s)} \Vert^2 + \frac{\gamma_{t+1}^{(s)} A_{t+1}'^{(s)}}{\lambda_{t+1}^{(s)}} \delta^2, 
\end{split}
\end{align}
where we also use the facts $ \gamma_{t+1}^{(s)} a_{t+1}'^{(s)} = a_{t+1}^{(s)}$ and $\lambda_{t+1}'^{(s)} = \gamma_{t+1}^{(s)} \lambda_{t+1}^{(s)}$. Note that
\begin{equation*}
    \frac{\gamma_{t+1}^{(s)} A_{t+1}'^{(s)}}{\lambda_{t+1}^{(s)}} = \frac{(\gamma_{t+1}^{(s)})^2 A_{t+1}'^2 }{\lambda_{t+1}'^{(s)}} = 2 (\gamma_{t+1}^{(s)} a_{t+1}'^{(s)})^2 = 2(a_{t+1}^{(s)})^2.
\end{equation*}
Summing up Eq. (\ref{eq:MS-1}) and Eq. (\ref{eq:MS-3}) yields that
\[
    \textit{Case II:} \ A_{t+1}^{(s)} E_{t+1}^{(s)} + R_{t+1}^{(s)} + \frac{1}{2} \lambda_{t+1}'^{(s)} A_{t+1}'^{(s)} N_{t+1}^{(s)}\le A_t^{(s)} E_t^{(s)} + R_t^{(s)} + a_{t+1}^{(s)} \delta \Vert \vv_t^{(s)} - \vz^* \Vert + 3 (a_{t+1}^{(s)})^2 \delta^2.
\]
The inequality for both \textit{Case I} and \textit{Case II} can be unified as
\begin{equation} \label{eq:MS-unify}
    A_{t+1}^{(s)} E_{t+1}^{(s)} +R_{t+1}^{(s)} + \frac{1}{2} \min\{ \lambda_{t+1}^{(s)}, \lambda_{t+1}'^{(s)}\} A_{t+1}'^{(s)} N_{t+1}^{(s)} \le A_t^{(s)} E_t^{(s)} + R_t^{(s)} + \delta_{t+1}^{(s)},
    \end{equation}
where $\delta_{t+1}^{(s)}:=  a_{t+1}^{(s)} \delta \Vert \vv_t^{(s)} - \vz^* \Vert + 3 (a_{t+1}^{(s)})^2 \delta^2$. 
We let $\delta$ be sufficiently small such that 
\begin{equation} \label{eq:dfn-beta-s}
 \beta^{(s)}:= \min \left\{ D_\gZ, \left( \frac{(p+1) \epsilon^{(s)}}{\mu} \right)^{1/(p+1)} \right\} \ge A_{T}^{(s)}  \delta /c , \quad \forall s = 0,\cdots,S-1 
\end{equation}
for a constant $c>0$ that we specify later, and 
$\beta^{(s)}$ is an upper bound of $\Vert \vz^{(s)} - \vz^* \Vert$ by Lemma~\ref{lem:UC-grad-dominant} since $h: \sR^d \rightarrow \sR$ is $(p+1)$th-order $\mu$-uniformly convex.
Let $K= \arg \max_{1 \le j \le T} \Vert \vv_j^{(s)} - \vz^* \Vert$.
Telescoping $\delta_{t+1}^{(s)} = a_{t+1}^{(s)} \delta \Vert \vv_t^{(s)} - \vz^* \Vert + 3 (a_{t+1}^{(s)})^2 \delta^2$ for all $t = 0,\cdots,T-1$ yields 
\begin{equation} \label{eq:sum-delta}
\sum_{t=0}^{T-1} \delta_{t+1}^{(s)} \le A_{T}^{(s)} \delta \Vert v_{K}^{(s)} - \vz^* \Vert + 3 (A_{T}^{(s)})^2 \delta^2  \le c \beta^{(s)} \Vert v_{K}^{(s)} - \vz^* \Vert + 3 \left(c \beta^{(s)} \right)^2,
\end{equation}
where the first inequality uses $\sum_{t=0}^{T-1} (a_{t+1}^{(s)})^2 \le \left( \sum_{t=0}^{T-1} a_{t+1}^{(s)} \right)^2 = \left( A_T^{(s)} \right)^2$ and the 
second 
one uses Eq.~(\ref{eq:dfn-beta-s}) that $A_T^{(s)} \delta \le c \beta^{(s)}$. In addition, telescoping Eq.~(\ref{eq:MS-unify}) over $t = 0,\cdots,K-1$ yields
\begin{equation} \label{eq:recur-Rk}
        R_K^{(s)} \le \frac{1}{2} \beta^{(s)} + \sum_{t=0}^{K-1} \delta_{t+1}^{(s)} \le \frac{1}{2} \beta^{(s)} + c \beta^{(s)} \max_{0 \le t \le K-1} \Vert \vv_t^{(s)} - \vz^* \Vert + 3 c^2 (\beta^{(s)})^2,
\end{equation}
where the first inequality uses
$A_0^{(s)} = 0$ and $R_0^{(s)} = \frac{1}{2} \Vert \vz^{(s)} - \vz^* \Vert^2 \le \frac{1}{2} \left(\beta^{(s)} \right)^2$, and the second one uses Eq.~(\ref{eq:sum-delta}).
Since $R_t^{(s)} = \frac{1}{2} \Vert \vv_t^{(s)} - \vz^* \Vert^2$,  Eq. (\ref{eq:recur-Rk}) implies that 
\begin{equation*}
    \frac{1}{2} \Vert \vv_{K}^{(s)} - \vz^* \Vert^2 \le \left( \frac{1}{2} +  3 c^2 \right) \left(\beta^{(s)} \right)^2 + c \beta^{(s)} \Vert \vv_{K}^{(s)} - \vz^* \Vert.
\end{equation*}
Solving the above quadratic inequality with respect to the variable $\Vert \vv_{K}^{(s)} - \vz^* \Vert$ yields
\begin{equation} \label{eq:ub-Rk}
\Vert \vv_{K}^{(s)} - \vz^* \Vert \le \left(c+ \sqrt{1 + 7 c^2} \right) \beta^{(s)} \le (1+5c) \beta^{(s)}.
\end{equation}
Then, combining Eq. (\ref{eq:sum-delta}) and (\ref{eq:ub-Rk}) yields
\begin{equation} \label{eq:ub-sum-delta}
\sum_{t=0}^{T-1} \delta_{t+1}^{(s)} \le c(1+8 c) (\beta^{(s)})^2
\end{equation}
Now, we telescope Eq. (\ref{eq:MS-unify}) over $t=0,1,\cdots,T-1$ and substitute Eq. (\ref{eq:ub-sum-delta}) to get
\begin{equation} \label{eq:our-MS}
    \ A_{T}^{(s)} E_{T}^{(s)}+ R_{T}^{(s)} + \frac{1}{2} \sum_{t=0}^{T-1} \min\{ \lambda_{t+1}^{(s)}, \lambda_{t+1}'^{(s)}\} A_{t+1}'^{(s)} N_{t+1}^{(s)}\le \frac{1}{2} \left(\beta^{(s)}\right)^2 + c(1+8 c) \left(\beta^{(s)} \right)^2 \le \left(\beta^{(s)} \right)^2,
\end{equation}
where we let the last inequality hold by setting $c \le (1 + \sqrt{17}) / 16$. 
Since Eq. (\ref{eq:our-MS}) implies 
\citep[ Eq. (9)]{carmon2022optimal} with their $R_0^{(s)}$ now being replaced by its upper bound $\beta^{(s)}$. We can then follow the remaining steps as the proof of \citep[Theorem 1]{carmon2022optimal}
to show that the algorithm finds $E_t^{(s)} \le \epsilon^{(s+1)}$ in no more than
\begin{equation*}
T = \gO\left( \left( \frac{\gamma (\beta^{(s)})^{p+1}}{\epsilon^{(s+1)}} \right)^{2/ (3p+1)}  \right) = \gO\left( (\gamma / \mu)^{2/(3p+1)} \right)
\end{equation*}
iterations, where the last equality plugs in $\beta^{(s)}$ in Eq. (\ref{eq:dfn-beta-s}) and $  \epsilon^{(s+1)} = \epsilon^{(s)}/2$ in Eq. (\ref{eq:epoch-half}).
Note that we can assume without loss of generality that in epoch $s$ we have ${\rm Gap}_h(\vz) \ge \epsilon^{(s+1)}/2$ for all $\vz \in \{\vz_t^{(s)}, \tilde \vz_t^{(s)} \}$ and $t = 0,\cdots,T-1$. Otherwise, we know 
that Line~\ref{line:select-best-beg} and Line~\ref{line:select-best-end} can guarantee to output a point satisfying ${\rm Gap}_h(\vz^{(s+1)}) \le \epsilon^{(s+1)}$ as required by Eq. (\ref{eq:epoch-half}) under the setting that $\delta \le \epsilon^{(s+1)} /2$. Following the same analysis with $s = 0$, we can also assume that $\Delta \ge \epsilon/2$.

The last thing to show is how to fulfill the goal of $\delta \le c {\beta^{(s)}}/{A_{T}^{(s)}}$ in Eq. (\ref{eq:dfn-beta-s}) for all $s = 0,\cdots,S-1$. By the definition of $\beta^{(s)}$ in Eq. (\ref{eq:dfn-beta-s}) and $\epsilon^{(s)} = \Delta / 2^s \ge \epsilon / 2^{s+1}$, we can lower-bound $\beta^{(s)}$ by $\underline{C}$, where
\begin{equation} \label{eq:lb-beta-s}
    \underline{C} := 
    \min \left\{D_\gZ, \left( \frac{(p+1) \epsilon}{\mu 2^{S}} \right)^{1/(p+1)}
    \right\} \le \beta^{(s)}, \quad \forall s = 0,\cdots,S-1.
\end{equation}
Therefore, we only need to give an upper bound of $A_T^{(s)}$, which is proved by induction as follows. Assume that we have found a constant $\bar C>0$ such that $A_t^{(s)} \le \bar C$ for all $t \le K$, we need to prove that the same constant also fulfills $A_{t}^{(s)} \le \bar C$ for $t ={K+1}$ under the setting $\delta \le c \underline{C} / \bar C$. Since Eq.~(\ref{eq:our-MS}) with $T=K$ holds under the induction hypothesis, we have that $\epsilon^{(s+1)}/2 < E_{K}^{(s)} \le (\beta^{(s)})^2/ A_K^{(s)}$, meaning that $A_K^{(s)} \le 2(\beta^{(s)})^2/ \epsilon^{(s+1)} \le 2 D_\gZ^2 / \epsilon^{(s+1)}$. Then from the update  $A_{t+1}'^{(s)} = 2 \lambda_{t+1}'^{(s)} \left(a_{t+1}'^{(s)}\right)^2$ we know that
\begin{equation} \label{eq:upper-at}
    a_{K+1}'^{(s)} = \frac{1 + \sqrt{1 + 8 \lambda_{K+1}'^{(s)} A_K^{(s)}}}{4 \lambda_{K+1}'^{(s)}} \le \frac{1}{2 \lambda_{K+1}'^{(s)}} + \frac{1}{2} \sqrt{\frac{2A_K^{(s)}}{\lambda_{K+1}'^{(s)}}} < \frac{1}{2 \lambda_{K+1}'^{(s)}} + D_\gZ \sqrt{\frac{1}{\epsilon^{(s+1)} \lambda_{K+1}'^{(s)}}}.
\end{equation}
Now, we need a lower bound of $\lambda_{K+1}'^{(s)}$ to upper bound the above quantity. We directly show a stronger result bound of the lower bound of all $\lambda_{t}'^{(s)}$.
As an intermediate goal, we first analyze~$\lambda_{t}^{(s)}$. 
Using the update of $\tilde \vz_{t}^{(s)}, \vu_{t}^{(s)} = {\rm iProx}_h (\bar \vz_{t-1}^{(s)}, \gamma, \delta)$
and the approximate proximal point condition (\ref{eq:cond-appr-prox}), we have
\begin{equation*}
    \frac{3 \lambda_{t}^{(s)}\Vert \tilde \vz_{t}^{(s)} - \bar \vz_{t-1}^{(s)} \Vert}{2} \ge  \Vert \nabla h(\tilde \vz_{t}^{(s)}) + \vu_{t}^{(s)} \Vert  - \delta \ge \frac{{\rm Gap}_h(\tilde \vz_t^{(s)})}{\Vert \tilde \vz_{t}^{(s)} - \vz^* \Vert} - \delta, \quad \forall t  =1,\cdots, T,
\end{equation*}
where the last step uses ${\rm Gap}_h (\tilde \vz_t^{(s)}) \le \langle \nabla h(\tilde \vz_t^{(s)}) + \vu_t^{(s)}, \tilde \vz_t^{(s)} - \vz^* \rangle \le \Vert \nabla h(\tilde \vz_t^{(s)}) + \vu_t^{(s)} \Vert \Vert \tilde \vz_t^{(s)} - \vz^* \Vert$ implied by the convexity of $h$.
Since ${\rm Gap}_h(\tilde \vz_t^{(s)}) \ge \epsilon^{(s+1)} / 2$, setting $\delta \le \epsilon^{(s+1)}/ (4 D_\gZ)$ ensures that 
\begin{equation} \label{eq:lambda-t-prime}
    \lambda_{t}^{(s)} \ge \frac{3}{2 \Vert \tilde \vz_{t}^{(s)} - \bar \vz_{t-1}^{(s)} \Vert} \left( 
    \frac{{\rm Gap}_h(\tilde \vz_t^{(s)})}{\Vert \tilde \vz_{t}^{(s)} - \vz^* \Vert} - \delta 
    \right) \ge  \frac{\epsilon^{(s+1)}}{6 D_\gZ^2}, \quad \forall t = 1,\cdots,T.
\end{equation}
Let $k$ the the last iterate before $K+1$ such that $ \lambda_k'^{(s)} > \lambda_k^{(s)}$, \textit{i.e.}, $k = \arg \max_{1 \le k' \le K} \{ \lambda_{k'}'^{(s)} > \lambda_{k'}^{(s)}\}$.
If $k=K$, then Eq. (\ref{eq:lambda-t-prime}) already gives a lower bound of $\lambda_{K+1}'^{(s)}$ since $\lambda_{K+1}'^{(s)} \ge \lambda_K'^{(s)}/2 > \lambda_K^{(s)}$.
Otherwise, if $k <K$, we know from the update rule of $\lambda_t'^{(s)}$ that 
\begin{equation} \label{eq:lambda-prime-K-1}
   \lambda_{K+1}'^{(s)} \ge \lambda_{K}'^{(s)}/2 =  2^{K-k-2} \lambda_{k+1}'^{(s)} = 2^{K-k-3} \lambda_k'^{(s)} > \lambda_k^{(s)} /4, \quad \forall t = 1,\cdots,T.
\end{equation}
Combining Eq. (\ref{eq:upper-at}), (\ref{eq:lambda-t-prime}), and (\ref{eq:lambda-prime-K-1}), we have
\begin{equation} \label{eq:ub-aK1}
    a_{K+1}' \le \frac{(12 + 2 \sqrt{6}) D_\gZ^2}{ \epsilon^{(s+1)}}.  
\end{equation}
Therefore, we can define the following constant $\bar C$ as an upper bound of $A_{K+1}^{(s)}$:
\begin{align} \label{eq:ub-AT}
\begin{split}
\bar C &:= \frac{2^{S+2} (7 +\sqrt{6}) D_\gZ^2}{\epsilon} \ge \frac{(14 + 2 \sqrt{6}) D_\gZ^2}{ \epsilon^{(s+1)}}  \\
&\ge \frac{2 D_\gZ^2}{\epsilon^{(s+1)}} + a_{K+1}'^{(s)} \ge  A_{K}^{(s)} + a_{K+1}'^{(s)} \ge A_{K}^{(s)} + a_{K+1}^{(s)}  = A_{K+1}^{(s)}, \quad \forall s = 0,\cdots,S-1,    
\end{split}
\end{align}
which finishes the induction. In the above, the first inequality uses the definition of $\epsilon^{(s+1)}$, the second uses Eq. (\ref{eq:ub-aK1}), and the third uses $ 2 D_\gZ^2 / \epsilon^{(s+1)} \ge A_K^{(s)}$.
And the induction base trivially holds since $A_0^{(s)} = 0$.
Note that $\bar C$ can be determined before running the algorithm. We can set $\delta \le c \underline{C} / \bar C$ and run the algorithm, and the above induction shows that $A_T^{(s)} \le \bar C$ always holds for all $s = 0,\cdots,S-1$. 
From the previous analysis, we know that the convergence indicated by Eq. (\ref{eq:our-MS}) holds, leading to the claimed complexity.  Finally,  we get Eq. (\ref{eq:cond-delta}) by combining all the conditions  $\delta \le c \underline{C} / \bar C$, $\delta \le \epsilon^{(s+1)} / (4 D_\gZ)$, $\delta \le \epsilon^{(s+1)} / 2$, the settings of $S = \lceil \log_2 (\nicefrac{\Delta}{\epsilon}) \rceil \le 1 + \log_2 (\nicefrac{\Delta}{\epsilon})$, $c = (1+ \sqrt{17}) / 16$, and $\underline{C}$, $\bar C$ in Eq. (\ref{eq:lb-beta-s}), (\ref{eq:ub-AT}).

\end{proof}

\section{Proof of Theorem \ref{thm:Minimax-AIPE-CC}} \label{apx:proof-algo}

Recall 
the primal objective $\Phi(\vx) := \max_{\vy \in \gY} f_\epsilon(\vx,\vy)$ and dual objective $\Psi(\vy; \bar \vx):= \min_{\vx \in \gX} g_\epsilon(\vx,\vy ; \bar \vx)$, where $g_\epsilon(\vx,\vy; \bar \vx) := f_\epsilon(\vx,\vy) + (\gamma / (p+1)) \Vert \vx- \bar \vx \Vert^{p+1}$. 
Combining Lemma \ref{lem:cubic-func} and Lemma \ref{lem:benigh-reg-f} (item 1), 
we can easily obtain the following properties for the objectives $f_{\epsilon}(\vx,\vy)$,   $g_{\epsilon}(\vx,\vy; \bar \vx)$,  $h_{\epsilon}(\vx,\vy; \bar \vx,\bar \vy)$, $\Phi(\vx)$, and $\Psi(\vy; \bar \vx)$.
\begin{lemma} \label{lem:prop-funcs}
Under Assumption \ref{asm:D}, \ref{asm:CC}, \ref{asm:grad-lip}, and \ref{asm:Hess-lip}, we have the following:
\begin{enumerate}
    \item $f_{\epsilon}(\vx,\vy)$ is $(p+1)$th-order $(\mu_x/2^{p-1} )$-uniformly-convex-$(\mu_y/2^{p-1})$-uniformly-concave; has $(L_1 + p \mu_x D_\gX^{p-1})$-Lipschitz continuous gradients and $(L_1+ p! \mu_x)$-Lipschitz continuous $p$th-order in $\vx$, $(L_1 + p \mu_y D_\gY^{p-1})$-Lipschitz continuous gradients and $(L_p+ p!\mu_y)$-Lipschitz continuous $p$th-order derivatives in $\vy$.
    \item $g_{\epsilon}(\vx,\vy; \bar \vx)$ is $(p+1)$th-order $((\gamma +\mu_x)/2^{p-1} )$-uniformly-convex-$(\mu_y/2^{p-1})$-uniformly-concave; has $(L_1+ p(\gamma+\mu_x) D_\gX^{p-1})$-Lipschitz continuous gradients and 
    $(L_1+ p!(\gamma +\mu_x))$-Lipschitz continuous $p$th-order derivatives
    in $\vx$, $(L_1 + p \mu_y D_\gY^{p-1})$-Lipschitz continuous gradients and $(L_p + p! \mu_y)$-Lipschitz continuous $p$th-order derivatives in $\vy$.    
    \item $h_{\epsilon}(\vx,\vy;\bar \vx, \bar \vy)$ is $(p+1)$th-order $((\gamma +\mu_x)/2^{p-1} )$-uniformly-convex-$((\gamma+\mu_y)/2^{p-1})$-uniformly-concave; has $(L_1 + p \max\{(\gamma+\mu_x) D_\gX^{p-1}, (\gamma+\mu_y) D_\gY^{p-1}\})$-Lipschitz continuous gradients and $(L_p + p! (\gamma+ \max\{\mu_x,\mu_y\}))$-Lipschitz continuous $p$th-order derivatives jointly in $(\vx,\vy)$.
    \item $\Phi(\vx)$ is $(p+1)$th-order $(\mu_x/ 2^{p-1})$-uniformly convex.
    \item $\Psi(\vy; \bar \vx)$ is $(p+1)$th-order $(\mu_y/ 2^{p-1})$-uniformly concave.
\end{enumerate}
\end{lemma}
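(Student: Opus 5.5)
The plan is to treat each object as a sum of simple pieces and read off every claimed constant from Lemma~\ref{lem:cubic-func} and Lemma~\ref{lem:benigh-reg-f}. Write $f_\epsilon = f + \mu_x d_{p+1}(\vx-\vx_0) - \mu_y d_{p+1}(\vy-\vy_0)$, where $d_{p+1}(\vz)=\Vert\vz\Vert^{p+1}/(p+1)$. Similarly, $g_\epsilon = f_\epsilon + \gamma d_{p+1}(\vx-\bar\vx)$ and $h_\epsilon = g_\epsilon - \gamma d_{p+1}(\vy-\bar\vy)$.

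\textbf{Step 1: uniform convexity and concavity.} By Lemma~\ref{lem:cubic-func} (item 1 with $p+1$ in place of $p$), $d_{p+1}$ is $(p+1)$th-order $(1/2)^{p-1}$-uniformly convex. Definition~\ref{dfn:UC} is additive in the modulus, and $f(\cdot,\vy)$ is convex, i.e.\ $0$-uniformly convex. Adding $\mu_x d_{p+1}$ therefore gives modulus $\mu_x/2^{p-1}$ in $\vx$, and adding a further $\gamma d_{p+1}$ gives $(\gamma+\mu_x)/2^{p-1}$. The same argument with signs flipped handles $\vy$. This yields the modulus claims in items 1--3.

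\textbf{Step 2: Lipschitz constants.} For the gradient bounds, $\nabla d_{p+1}(\vz)=\Vert\vz\Vert^{p-1}\vz$ has Jacobian of norm at most $p\Vert\vz\Vert^{p-1}$. On a set of diameter $D$ containing the center ($\vx_0,\bar\vx\in\gX$), this gives Lipschitz constant $pD^{p-1}$, which we add to $L_1$. For the $p$th-order bounds, Lemma~\ref{lem:cubic-func} (item 2 applied to $d_{p+1}$) gives a $p!$-Lipschitz $p$th derivative, which we scale by $\mu_x$, $\gamma+\mu_x$, and so on, and add to $L_p$. For item 3, the regularizers in $h_\epsilon$ are block-separable in $(\vx,\vy)$, so the joint constant is controlled by the maximum of the two blocks. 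The statement as written has $L_1$ in places where $L_p$ is plainly intended; I would record the correct $L_p+p!(\cdot)$ form.

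\textbf{Step 3: items 4 and 5.} These follow from Lemma~\ref{lem:benigh-reg-f} (item 1, with order $p+1$) applied to $f_\epsilon$ and $g_\epsilon$. By Step 1, $f_\epsilon$ has $\vx$-modulus $\mu_x/2^{p-1}$, so $\Phi$ inherits it. For $\Psi(\cdot;\bar\vx)$, we apply the $\vy$-half of that lemma to $g_\epsilon$, whose $\vy$-modulus is $\mu_y/2^{p-1}$. Lemma~\ref{lem:benigh-reg-f} is stated with the max and min over compact sets, so the maximizers exist and are unique by strict concavity.

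\textbf{Main obstacle.} The only nontrivial point is the gradient Lipschitz bound for the power function on a bounded set, namely $\Vert\nabla^2 d_{p+1}(\vz)\Vert\le p\Vert\vz\Vert^{p-1}$. I would verify this directly from the Hessian $\Vert\vz\Vert^{p-1}I+(p-1)\Vert\vz\Vert^{p-3}\vz\vz^\top$. Everything else is bookkeeping.
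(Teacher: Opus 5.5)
Your proposal is correct and takes essentially the paper's approach. The paper simply combines Lemma~\ref{lem:cubic-func} (uniform convexity and $p!$-Lipschitz $p$th derivative of $d_{p+1}$, added to the convex-concave $f$) with Lemma~\ref{lem:benigh-reg-f} (item 1) for $\Phi$ and $\Psi$; your Hessian bound $p\Vert\vz\Vert^{p-1}$ supplies the gradient-Lipschitz constants $pD^{p-1}$ that the paper leaves implicit, and you are right that the $L_1$ in the $p$th-order constants of items 1--2 should read $L_p$.
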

Our proposed algorithm first applies Algorithm \ref{alg:ANPE-restart} on $\Phi(\vx)$, and then applies Algorithm \ref{alg:ANPE-restart} on $\Psi(\vy; \bar \vx)$.  It requires the (inexact) zeroth-order, first-order and high-order proximal oracles of $\Phi(\vx)$ and $\Psi(\vy; \bar \vx)$. We have discussed how to obtain the proximal oracles in Section \ref{subsec:algo}. Below, we show how to obtain the $\delta$-zeroth-order and first-order oracles for both $\Phi(\vx)$ and $\Psi(\vy; \bar \vx)$. 

\begin{lemma} \label{lem:get-zo-fo}
Under Assumption \ref{asm:D}, \ref{asm:CC}, \ref{asm:grad-lip}, and \ref{asm:Hess-lip}, OptMS-restart (Theorem \ref{thm:ANPE})
\begin{enumerate}
    \item finds a point $\hat \vy \in \gY$ such that $f_\epsilon(\vx,\hat \vy) - \Phi(\vx) \le \delta$ with complexity  
    \begin{equation} \label{eq:comp-zo-Phi}
        \gO \left( \left( \frac{L_p+p! \mu_y}{\mu_y} \right)^{2/(3p+1)} \log \left( \frac{D_\gY^2(L_1 + p \mu_y D_\gY^{p-1})}{\delta} \right) \right).
    \end{equation}
    \item finds a point $\hat \vy \in \gY$ such that $\Vert \nabla f_\epsilon(\vx,\hat \vy) - \nabla \Phi(\vx) \Vert \le \delta$ with complexity 
    \begin{equation} \label{eq:comp-fo-Phi}
        \gO \left( \left( \frac{L_p + p! \mu_y}{\mu_y} \right)^{2/(3p+1)} \log \left( \frac{D_\gY^2 \left(L_1+ p \mu_y D_\gY^{p-1} \right)^{p+2} }{\mu_y \delta^{p+1}} \right) \right).
    \end{equation} 
    \item finds a point $\hat \vx \in \gX$ such that $\Psi(\vy ; \bar \vx)- g_\epsilon(\hat \vx,\vy; \bar \vx) \le \delta$ with complexity
    \begin{equation} \label{eq:comp-zo-Psi}
        \gO \left( \left( \frac{L_p + p! (\gamma + \mu_x)}{\gamma + \mu_x} \right)^{2/(3p+1)} \log \left( \frac{D_\gX^2 (L_1+ p (\gamma + \mu_x) D_\gX^{p-1})}{\delta} \right) \right).
    \end{equation}
    \item finds a point $\hat \vx \in \gX$ such that $\Vert \nabla_y g_\epsilon(\hat \vx,\vy; \bar \vx) - \nabla \Psi(\vy ; \bar \vx) \Vert \le \delta$ with complexity 
    \begin{equation} \label{eq:comp-fo-Psi}
        \gO \left( \left( \frac{L_p + p!(\gamma+ \mu_x)}{\gamma+ \mu_x} \right) 
        \log \left( \frac{D_\gX^2 \left(L_1+ p (\gamma+\mu_x) D_\gX^{p-1} \right)^{p+2} }{\mu_x \delta^{p+1}} \right)
        \right).
    \end{equation}
\end{enumerate}
\end{lemma}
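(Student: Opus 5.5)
Each item reduces to running OptMS-restart (Theorem~\ref{thm:ANPE}) on a single-variable uniformly convex (or concave) problem. Accuracy in function value is then turned into accuracy of the required quantity through uniform convexity and smoothness. For items 1 and 2, fix $\vx$ and consider $-f_\epsilon(\vx,\cdot)$ on $\gY$. By Lemma~\ref{lem:prop-funcs} (item 1) it is $(p+1)$th-order $(\mu_y/2^{p-1})$-uniformly convex, has $(L_p+p!\mu_y)$-Lipschitz $p$th-order derivatives, and has $(L_1+p\mu_y D_\gY^{p-1})$-Lipschitz gradients. Theorem~\ref{thm:ANPE} then gives an $\epsilon'$-minimizer in $\gO(((L_p+p!\mu_y)/\mu_y)^{2/(3p+1)}\log(\Delta/\epsilon'))$ calls, where $2^{p-1}$ is absorbed into the constant since $p$ is fixed. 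The initial gap is bounded by smoothness and the diameter: using the first-order optimality of $\vy^*(\vx)$, $\Delta\le \tfrac12(L_1+p\mu_yD_\gY^{p-1})D_\gY^2$. Taking $\epsilon'=\delta$ gives item 1, i.e.\ Eq.~(\ref{eq:comp-zo-Phi}).

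For item 2, I would first recall Danskin's theorem: the maximizer is unique by uniform concavity, so $\nabla\Phi(\vx)=\nabla_x f_\epsilon(\vx,\vy^*(\vx))$. The gradient error is then controlled by the distance to the maximizer:
\[
\Vert\nabla_x f_\epsilon(\vx,\hat\vy)-\nabla\Phi(\vx)\Vert\le (L_1+p\mu_yD_\gY^{p-1})\Vert\hat\vy-\vy^*(\vx)\Vert .
\]
The right-hand side in the statement should read $\nabla_x f_\epsilon$; the full gradient $\nabla f_\epsilon$ may be read as its $x$-block. By Lemma~\ref{lem:UC-grad-dominant} applied to $-f_\epsilon(\vx,\cdot)$,
\[
\Vert\hat\vy-\vy^*(\vx)\Vert^{p+1}\le \frac{(p+1)2^{p-1}}{\mu_y}\,\epsilon' .
\]
Hence it suffices to take $\epsilon'\asymp \mu_y\delta^{p+1}/(L_1+p\mu_yD_\gY^{p-1})^{p+1}$. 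With the bound on $\Delta$ above, $\log(\Delta/\epsilon')$ becomes exactly the logarithm in Eq.~(\ref{eq:comp-fo-Phi}).

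Items 3 and 4 are the symmetric argument applied to $g_\epsilon(\cdot,\vy;\bar\vx)$ on $\gX$. By Lemma~\ref{lem:prop-funcs} (item 2) it is $((\gamma+\mu_x)/2^{p-1})$-uniformly convex, has $(L_p+p!(\gamma+\mu_x))$-Lipschitz $p$th-order derivatives, and has $(L_1+p(\gamma+\mu_x)D_\gX^{p-1})$-Lipschitz gradients. One point needs care: the $x$-block regularizer is only present in the $\vx$-Hessian structure, and the cross Lipschitz constant $L_1$ of $\nabla_y g_\epsilon$ in $\vx$ is unaffected by it. This gives Eq.~(\ref{eq:comp-zo-Psi}) and Eq.~(\ref{eq:comp-fo-Psi}). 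In Eq.~(\ref{eq:comp-fo-Psi}) the $\mu_x$ in the logarithm comes from lower-bounding the modulus $\gamma+\mu_x\ge\mu_x$, and the exponent $2/(3p+1)$ on the leading factor is what the argument actually yields.

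The main obstacle is justifying that Theorem~\ref{thm:ANPE}'s tensor steps are implementable with $p$th-order oracles of $f$, since the added power term $\Vert\cdot\Vert^{p+1}$ has explicitly computable derivatives. It also requires checking that the Lipschitz constant of the $p$th derivative of that power term is $p!$ times its coefficient, which is Lemma~\ref{lem:cubic-func} (item 2). The rest is bookkeeping of constants inside the logarithms.
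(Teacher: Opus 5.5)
Your route is the paper's. You run OptMS-restart (Theorem~\ref{thm:ANPE}) on $-f_\epsilon(\vx,\cdot)$, resp.\ $g_\epsilon(\cdot,\vy;\bar\vx)$, with the constants of Lemma~\ref{lem:prop-funcs}. For the first-order items you use Danskin's theorem and the growth bound of Lemma~\ref{lem:UC-grad-dominant} to turn a function-value tolerance $\epsilon'\asymp\mu_y\delta^{p+1}/\tilde L^{p+1}$, with $\tilde L:=L_1+p\mu_yD_\gY^{p-1}$, into a gradient error $\delta$. Your side remarks are also correct: the $x$-block in item 2, the cross-Lipschitz constant $L_1$ of $\nabla_y g_\epsilon$ in $\vx$, the missing exponent $2/(3p+1)$ in Eq.~(\ref{eq:comp-fo-Psi}), and using $\gamma+\mu_x\ge\mu_x$ in its logarithm.

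The step that fails is the initial-gap bound $\Delta\le\frac12\tilde L D_\gY^2$ ``by first-order optimality of $\vy^*(\vx)$''. On a constrained $\gY$, smoothness gives
\[
\Phi(\vx)-f_\epsilon(\vx,\vy_0)\le\langle\nabla_y f_\epsilon(\vx,\vy^*(\vx)),\vy^*(\vx)-\vy_0\rangle+\tfrac{\tilde L}{2}\Vert\vy_0-\vy^*(\vx)\Vert^2 .
\]
The optimality condition $\langle\nabla_y f_\epsilon(\vx,\vy^*(\vx)),\vy-\vy^*(\vx)\rangle\le 0$ for all $\vy\in\gY$ makes the linear term nonnegative, not zero. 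For example, take $d_y=1$, $\gY=[0,D_\gY]$, regularization center $\vy_0=0$, and $f(\vx,\vy)=G\vy$ with $G\ge\mu_yD_\gY^p$. All assumptions hold with $L_1,L_p$ independent of $G$. Yet $\vy^*(\vx)=D_\gY$, and $\Phi(\vx)-f_\epsilon(\vx,\vy)=G(D_\gY-\vy)-\frac{\mu_y}{p+1}(D_\gY^{p+1}-\vy^{p+1})$ is unbounded in $G$ for every fixed starting point $\vy<D_\gY$. So the claimed logarithm does not follow from Theorem~\ref{thm:ANPE} as you apply it.

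The paper's proof rests on the same unjustified bound $\Phi(\vx)-f_\epsilon(\vx,\vy_0)\lesssim\tilde L\Vert\vy_0-\vy^*(\vx)\Vert^2$ (written there with the sign reversed), so this is a shared defect. It is cheap to repair. First take one projected gradient ascent step $\vy_1={\rm Proj}_\gY(\vy_0+\tilde L^{-1}\nabla_y f_\epsilon(\vx,\vy_0))$ and start OptMS-restart from $\vy_1$. The descent lemma, concavity and the projection inequality give $\Phi(\vx)-f_\epsilon(\vx,\vy_1)\le\frac{\tilde L}{2}\bigl(\Vert\vy_0-\vy^*(\vx)\Vert^2-\Vert\vy_1-\vy^*(\vx)\Vert^2\bigr)\le\frac{\tilde L}{2}D_\gY^2$, at the cost of one oracle call. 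After that your bookkeeping goes through unchanged. Alternatively, keep the argument and put $\Vert\nabla_y f_\epsilon(\vx,\vy_0)\Vert D_\gY$ into the logarithm. Items 3 and 4 need the same correction, with $g_\epsilon(\cdot,\vy;\bar\vx)$ and the constant $L_1+p(\gamma+\mu_x)D_\gX^{p-1}$.
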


\begin{proof}
The uniform convexity (concavity) and Lipschitz constants of the objectives $f_\epsilon(\vx,\,\cdot\,)$ and $g_\epsilon(\,\cdot\, , \vy ; \bar \vx)$
are given in Lemma \ref{lem:prop-funcs}. Given these quantities, we can apply Theorem~\ref{thm:ANPE} to 
prove all the items in a similar manner. In the following, we only provide the proof of item 1 and 2 since item 3 and 4 are simply the dual statements of them.
\begin{enumerate}
    \item 
According to Theorem \ref{thm:ANPE}, the complexity of finding a point $\hat \vy \in \gY$ such that $f_\epsilon(\vx,\hat \vy) - \Phi(\vx) \le \delta$ using OptMS-restart is upper bounded by 
\begin{equation*}
\gO \left( \left( \frac{L_p+p! \mu_y}{ \mu_y} \right)^{2/(3p+1)} \log \left( \frac{f_\epsilon(\vx,\vy_0) - \Phi(\vx)}{\delta} \right) \right),   
\end{equation*}
where $\vy_0$ is the initialization point for the algorithm. We then obtain Eq. (\ref{eq:comp-zo-Phi}) by using 
\begin{equation*}
  f_\epsilon(\vx,\vy_0) - \Phi(\vx) \le (L_1 + p \mu_y D_\gY^{p-1}) \Vert \vy_0  - \vy^*(\vx) \Vert^2.
\end{equation*}
\item By Danskin's theorem, we have that $\nabla \Phi(\vx) = \nabla_x f_\epsilon(\vx,\vy^*(\vx))$, where $\vy^*(\vx) = \arg \max_{\vy \in \gY} f_\epsilon(\vx,\vy)$. Therefore, to obtain a $\delta$-first-order oracle of $\Phi(\vx)$ under Assumption \ref{asm:grad-lip}, it suffices to find $\hat \vy \in \gY$ such that 
\begin{equation*}
  \Vert \hat \vy - \vy^*(\vx) \Vert \le \frac{\delta}{L_1 + p \mu_y D_\gY^{p-1}} .  
\end{equation*} 
Further,
by Lemma \ref{lem:UC-grad-dominant} (the second inequality) and Lemma \ref{lem:reg-f-eps-close} (item 1), it only needs to find a point $\hat \vy \in \gY$ such that
\begin{equation*}
    f_{\epsilon}(\vx,\hat \vy) - \Phi(\vx) \le \frac{\mu_y}{2^{p-1}(p+1)} \left(\frac{\delta}{L_1 + p \mu_y D_\gY^{p-1}}  \right)^{p+1}.
\end{equation*}
Then we can apply item 1 to show item 2 for any constant $p \in \sN_+$.
\end{enumerate}
 
\end{proof}

Below, we will show that the above complexity of obtaining zeroth- and first-order oracles for both primal and dual functions in Lemma \ref{lem:get-zo-fo} is negligible compared to the complexity of obtaining their proximal oracles and then complete the proof of Theorem \ref{thm:Minimax-AIPE-CC}.

\begin{proof}[Proof of Theorem \ref{thm:Minimax-AIPE-CC}]
We analyze the complexity of each loop individually.
\noindent \textbf{Outer-loop complexity.} Let $\zeta_1 > 0 $ be the precision that will be chosen to be sufficiently small.
We first assume that, in each iteration, the sub-routine Algorithm~\ref{alg:Minimax-AIPE-mid} can return a $(\delta_1,\gamma)$ proximal oracle of $\Phi(\vx)$ such that $\delta_1 \lesssim \zeta_1^{2(p+2) / (p+1)} /\mu_x^{1/(p+1)}$. Let 
\begin{equation*}
    (\vx^*,\vy^*) = \arg \min_{\vx \in \gX} \max_{\vy \in \gY} f_\epsilon(\vx,\vy).
\end{equation*}
Then, by Theorem \ref{thm:AIPE} and Lemma \ref{lem:UC-grad-dominant} on $\Phi(\vx)$, we know that our algorithm finds 
$\Vert \hat \vx - \vx^* \Vert \le \zeta_1$ in 
\begin{equation} \label{eq:compl-outer}
   \# \text{OuterLoop} =  \gO\left( \left( \frac{\gamma}{\mu_x}\right)^{2/(3p+1)} \log \left( \frac{D_\gX^2 (L_1 + p \mu_x D_\gX^{p-1})}{\mu_x \zeta_1^{p+1}} \right) \right)
\end{equation}
calls of the sub-routine Algorithm \ref{alg:Minimax-AIPE-mid}. And the complexity of obtaining $\delta_1$-zeroth and first-order oracle of $\Phi(\vx)$ is given by Eq. (\ref{eq:comp-zo-Phi}) and Eq. (\ref{eq:comp-fo-Phi}) with $\delta = \delta_1$, respectively. Moreover, we know from Theorem \ref{thm:ANPE} that OptMS-restart can find $\hat \vy \in \gY $ such that $\Vert \hat \vy - \vy^*(\hat \vx) \Vert \le \zeta_1$, where $\vy^*(\vx) = \arg \max_{\vy \in \gY} f_\epsilon(\vx,\vy)$, in $p$th-order oracle complexity upper-bounded by
\begin{equation*}
    \gO \left( \left( \frac{L_p + p! \mu_y}{\mu_y} \right)^{2/(3p+1)} \log \left( \frac{D_\gY^2 (L_1 + p \mu_y D_\gY^{p-1}) }{\mu_y \zeta_1^{p+1}} \right) \right).
\end{equation*}
 Now, using $\vy^*(\vx^*) = \vy^*$, Lemma \ref{lem:reg-f-eps-close} (item 1), and Lemma \ref{lem:benigh-reg-f} (item 2) on $f_\epsilon(\vx,\vy)$, we have
\begin{align*}
\Vert \hat \vy - \vy^* \Vert \le& \Vert \hat \vy - \vy^*(\hat \vx) \Vert + \Vert \vy^*(\hat \vx)  - \vy^*(\vx^*) \Vert \\
\le&\Vert \hat \vy - \vy^*(\hat \vx) \Vert + \left(\frac{(p+1) 2^{p-2} L_1 \Vert \hat \vx - \vx^* \Vert}{ \mu_y} \right)^{1/p}. 
\end{align*}
This means that we have
\begin{equation} \label{eq:final-dist}
    \Vert \hat \vz - \vz^* \Vert \le \Vert \hat \vx - \vx^* \Vert+ \Vert \hat \vy - \vy^* \Vert\le 2\zeta_1 +  \left(\frac{ (p+1) 2^{p-2}  L_1 \zeta_1}{\mu_y} \right)^{1/p}.
\end{equation}
Let $\tilde L_1 = (L_1 + p \max\{ \mu_x D_\gX^{p-1}, \mu_y D_{\gY}^{p-1} \})$ be the gradient Lipschitz constant of $f_{\epsilon}(\vx,\vy)$ given by Lemma \ref{lem:prop-funcs} (item 1). Then Lemma \ref{lem:gd-make-g-small} shows that the algorithm outputs $\tilde \vz \in \gZ$   such that 
\begin{equation} \label{eq:final-gnorm}
    \left\Vert 
         \mF(\tilde \vz) + \tilde \vc
        \right \Vert \le  
        6 \tilde L_1 \Vert \hat \vz - \vz^* \Vert,
\end{equation}
where $\tilde \vc = \tilde L_1 (\hat \vz - \tilde \vz)- \mF(\hat \vz) \in \gN_{\gZ}(\tilde \vz)$.
Combining Eq. (\ref{eq:final-dist}) and (\ref{eq:final-gnorm}), we know there exists $\zeta_1 = 1 / {\rm poly}(\tilde L_1, \nicefrac{1}{\mu_y}, \nicefrac{1}{\epsilon})$ such that  $\Vert 
         \mF(\tilde \vz) + \tilde \vc
        \Vert \le \epsilon/2$. Finally, by Lemma \ref{lem:reg-f-eps-close} (item 2), we know the output is an $\epsilon$-solution to the original function $f(\vx,\vy)$.


\noindent \textbf{Middle-loop complexity}. 
Let $\zeta_2 > 0 $ be the precision that will be chosen to be sufficiently small. 
We first assume that, in each iteration, the sub-routine Algorithm \ref{alg:Minimax-AIPE-inner} can return a $(\delta_2,\gamma)$ proximal oracle of $\Psi(\vy;\bar \vx)$ such that $\delta_2 \lesssim \zeta_2^{2(p+2) / (p+1) } / \mu_y^{1/(p+1)}$.
Our goal is to show that the middle loop can return a $(\delta_1,\gamma)$ proximal oracle of $\Phi(\vx)$ with the precision $\delta_1$ required by the outer loop. Let
\begin{equation*}
   (\vx^*(\bar \vx), \vy^*(\bar \vx)) = \arg \min_{\vx \in \gX} \max_{\vy \in \gY} \left\{g_\epsilon(\vx,\vy; \bar \vx):= f_\epsilon(\vx,\vy) + \frac{\gamma}{p+1} \Vert \vx - \bar \vx \Vert^{p+1} \right\}.
\end{equation*} 
Then, by Theorem \ref{thm:AIPE} and Lemma \ref{lem:UC-grad-dominant} on $\Psi(\vy; \bar \vx)$, our algorithm finds 
$ \Vert \hat \vy - \vy^*(\bar \vx) \Vert \le \zeta_2$  in 
\begin{equation} \label{eq:compl-mid}
   \# \text{MiddleLoop} =  \gO\left( \left( \frac{\gamma}{\mu_y}\right)^{2/(3p+1)} \log \left( \frac{D_\gY^2 (L_1 + p \mu_y D_\gY^{p-1})}{\mu_y \zeta_2^{p+1}} \right) \right)
\end{equation}
calls of the sub-routine Algorithm \ref{alg:Minimax-AIPE-inner}. The complexity of obtaining $\delta_2$-zeroth and first-order oracle of $\Psi(\vy;\bar \vx)$ is given by Eq. (\ref{eq:comp-zo-Psi}) and Eq. (\ref{eq:comp-fo-Psi}) with $\delta = \delta_2$, respectively. Also,
according to Lemma~\ref{lem:UC-grad-dominant}, and Lemma \ref{lem:reg-f-eps-close} (item 1), and Lemma \ref{lem:benigh-reg-f} (item 1), we can apply
Lemma \ref{lem:get-zo-fo} (item~3) with $\delta = \mu_x \zeta_2^{p+1} / ((p+1)2^{p-1} )$ to conclude that OptMS-restart finds $\hat \vx \in \gX$ such that $\Vert \hat \vx - \vx^*(\hat \vy; \bar \vx) \Vert \le \zeta_2$, where $\vx^*(\vy; \bar \vx) = \arg \min_{\vx \in \gX} g_\epsilon(\vx,\vy; \bar \vx)$, in $p$th-order oracle complexity upper-bounded by 
\begin{equation*}
    \gO \left( \left( \frac{L_p + p! (\gamma +\mu_x)}{\gamma + \mu_x} \right)^{2/(3p+1)} \log \left( \frac{D_\gX^2 (L_1 + p (\gamma+\mu_x) D_\gX^{p-1})}{
    \mu_x \zeta_2^{p+1}} \right) \right).
\end{equation*}
Let $\tilde L_1^x = L_1 + p (\gamma+ \mu_x) D_\gX^{p-1}$ be the gradient Lipschitz constant of $g_{\epsilon}(\vx,\vy; \bar \vx)$ given by Lemma \ref{lem:prop-funcs} (item~2). Then by Lemma \ref{lem:gd-make-g-small}, our algorithm outputs $\tilde \vx \in \gX$ and $\tilde \vu \in \gN_{\gX}(\tilde \vx)$ such that 
\begin{equation} \label{eq:grad-small-g}
    \Vert \nabla_x g_\epsilon(\tilde \vx, \hat \vy; \bar \vx) + \tilde \vu \Vert  \le 6 \tilde L_1^x \Vert \hat \vx - \vx^*(\hat \vy; \bar \vx) \Vert \le 6 \tilde L_1^x \zeta_2.
\end{equation}
Our next goal is to show that Eq. (\ref{eq:grad-small-g}) implies an approximate proximal oracle of $\Phi(\vx)$ at the query point $\bar \vx \in \gX$ if $\zeta_2$ is sufficiently small.
Applying Lemma~\ref{lem:benigh-reg-f} (item 2) on $g_\epsilon(\vx,\vy;\bar \vx)$, we can obtain
\begin{align} \label{eq:cont-mid}
\begin{split}
    \Vert \vy^*(\vx_1; \bar \vx) - \vy^*(\vx_2; \bar \vx) \Vert^p &\le \frac{(p+1)2^{p-2} L_1}{ \mu_y} \Vert \vx_1 - \vx_2 \Vert, \quad \forall \vx_1,\vx_2 \in \gX; \\
    \Vert \vx^*(\vy_1; \bar \vx) - \vx^*(\vy_2; \bar \vx) \Vert^p &\le \frac{(p+1)2^{p-2} L_1}{(\gamma + \mu_x)} \Vert \vy_1 - \vy_2 \Vert, \quad \forall \vy_1,\vy_2 \in \gY,
\end{split}
\end{align}
where  $\vy^*(\vx; \bar \vx) =\arg \max_{\vy \in \gY} g_\epsilon(\vx,\vy; \bar\vx) $ and $\vx^*(\vy; \bar \vx) = \arg \min_{\vx \in \gX} g_\epsilon(\vx,\vy; \bar\vx)$. Let 
$\lambda = \gamma \Vert \tilde \vx - \bar \vx \Vert$, then we have
 \begin{align}  \label{eq:Phi-gnorm}
 \begin{split}
      &\Vert \nabla \Phi(\tilde \vx) + \lambda (\tilde \vx - \bar \vx) + \tilde \vu \Vert =
    \Vert \nabla_x g_\epsilon(\tilde \vx, \vy^*(\tilde \vx; \bar \vx) ;\bar \vx) + \tilde \vu \Vert \\
    \le& \Vert \nabla_x g_\epsilon(\tilde \vx, \hat \vy; \bar \vx) + \tilde \vu \Vert + \tilde L_1^x \left(\Vert \hat \vy - \vy^*(\bar \vx) \Vert + \Vert \vy^*(\bar \vx) - \vy^*(\tilde \vx; \bar \vx) \Vert \right) \\ 
    \le& \Vert \nabla_x g_\epsilon(\tilde \vx, \hat \vy; \bar \vx) + \tilde \vu \Vert + \tilde L_1^x \left(\Vert \hat \vy - \vy^*(\bar \vx) \Vert + \left(\frac{(p+1) 2^{p-2}  L_1 \Vert \tilde \vx - \vx^*(\bar \vx) \Vert }{\mu_y} \right)^{1/p} \right),
 \end{split}
\end{align}
where we use Eq. (\ref{eq:cont-mid}) and $\vy^*(\vx^*(\bar \vx)) = \vy^*(\bar \vx)$ in the last step. We want to let the right-hand side of Eq. (\ref{eq:Phi-gnorm}) be smaller than $\delta_1$, such that the sub-routine Algorithm \ref{alg:Minimax-AIPE-mid} returns a $(\delta_1,\gamma)$ proximal oracle of $\Phi(\vx)$ by Definition \ref{dfn:inexact-MS-oracle}. Since we have already shown that the first two terms can be arbitrarily close to zero when $\zeta_2 \rightarrow 0$, we only need to handle the third term involving $\Vert \tilde \vx - \vx^*(\bar \vx) \Vert$, which by Lemma \ref{lem:UC-grad-dominant}, Eq. (\ref{eq:cont-mid}), and the fact $\vx^*(\vy^*(\bar \vx)) = \vx^*(\bar \vx)$ can be upper-bounded by
\begin{align} \label{eq:Phi-xdist}
\begin{split}
     &\quad \Vert \tilde \vx - \vx^*(\bar \vx) \Vert \le \Vert \tilde \vx - \vx^*(\hat \vy; \bar \vx) \Vert +  \Vert \vx^*(\hat \vy; \bar \vx) - \vx^*(\bar \vx) \Vert   \\
    &\le p^{\frac{1}{p+1}} \left( \frac{(p+1)2^{p-1} \Vert \nabla_x g_\epsilon(\tilde \vx, \hat \vy; \bar \vx) + \tilde \vu \Vert} {\gamma + \mu_x}  \right)^{1/p} +
    \left( \frac{(p+1) 2^{p-2}  L_1 \Vert \hat \vy - \vy^*(\bar \vx) \Vert}{(\gamma + \mu_x)}  \right)^{1/p}.
\end{split}
\end{align}
Combining $\Vert \hat \vy - \vy^*(\bar \vx) \Vert \le \zeta_2 $,  Eq. (\ref{eq:grad-small-g}), and Eq. (\ref{eq:Phi-xdist}), we know that there exists some $\zeta_2 = 1/{\rm poly}( \tilde L_1^x, \nicefrac{1}{\mu_y}, \nicefrac{1}{(\gamma + \mu_x)}, \nicefrac{1}{\delta_1} )$  such that the right-hand side in Eq. (\ref{eq:Phi-gnorm})
is smaller than $\delta_1$, which 
fulfills the goal of the middle loop.
\noindent \textbf{Inner-loop complexity.} 
Let $\zeta_3 > 0 $ be the precision that will be chosen to be sufficiently small. 
Our goal is to show that the inner loop can return a $(\delta_2,\gamma)$-proximal of $\Psi(\vy;\bar \vx)$ with the precision $\delta_2$ required by the middle loop. Recall 
\begin{equation*}
     (\vx^*(\bar \vx, \bar \vy), \vy^*(\bar \vx, \bar \vy)) = \arg \min_{\vx \in \gX} \max_{\vy \in \gY} h_{\epsilon}(\vx,\vy; \bar \vx, \bar \vy),
\end{equation*}
where $h_\epsilon(\vx,\vy; \bar \vx, \bar \vy)$ is the two-sided regularized minimax function defined in Eq. (\ref{prob:minimax-h}).
Since each epoch of Algorithm \ref{alg:Minimax-AIPE-inner} is exactly the $p$th-order EG method \citep{bullins2022higher} on $h_{\epsilon}(\vx,\vy;\bar \vx,\bar \vy)$.
Let $\tilde L_p^\gamma = L_p + p! ( \gamma + \max\{\mu_x, \mu_y\})$ be
the $p$th-order Lipschitz constant of $h_\epsilon(\vx,\vy; \bar \vx, \bar \vy)$ 
given in Lemma \ref{lem:prop-funcs} (item 3).
Then, by \citep[Lemma 4.1]{bullins2022higher}, setting $M = 32 \tilde L_p^{\gamma}$ ensures that each epoch of the algorithm satisfies
\begin{equation*}
    \frac{1}{\sum_{t=0}^{T-1} \eta_t^{(s)}} \sum_{t=0}^{T-1} \left \langle
    \eta_t^{(s)} \mF^{\gamma}(\vz_t^{(s)}),  \vz_t^{(s)} -  \vz^*(\bar \vz)
    \right \rangle \le \frac{16 \tilde L_p^\gamma \Vert \vz^{(s)} - \vz^*(\bar \vz) \Vert^{p+1}}{p!~T^{(p+1)/2}}.
\end{equation*} 
Since $h_\epsilon(\vx,\vy;\bar \vx, \bar \vy)$ is $(p+1)$th-order $(\gamma / 2^{p-1})$-uniformly-convex-$ (\gamma / 2^{p-1})$-uniformly-concave by Lemma \ref{lem:prop-funcs} (item 3), 
we can apply 
Lemma~\ref{lem:U-monotone} on the left-hand side of the above inequality to obtain that
\begin{equation*}
    \Vert \vz^{(s+1)} - \vz^*(\bar \vz) \Vert^{p+1} \le \frac{2^{p+2} \tilde L_p \Vert \vz^{(s)} - \vz^*(\bar \vz) \Vert^{p+1}}{\gamma p!~T_3^{(p+1)/2}}.
\end{equation*}
For any constant $p \in \sN_+$, the above inequality means we can find $\Vert \vz^{(S_3)} - \vz^*(\bar \vz) \Vert \le \zeta_3$ by setting the epoch length $T_3 = \gO( (\nicefrac{\tilde L_p^\gamma}{\gamma})^{(p+1)/2})$ and number of epochs $S_3 = \gO( \log (\nicefrac{D_\gZ}{\zeta_3}))$. Hence, the $p$th-order oracle complexity of the inner loop is upper bounded by
\begin{equation} \label{eq:compl-inner}
    \# \text{InnerLoop} = \gO \left( \left( \frac{\tilde L_p^\gamma}{\gamma } \right)^{(p+1)/2} \log \left( \frac{D_\gZ}{\zeta_3} \right) \right).
\end{equation}
Let $\tilde L_1^\gamma = (L_1 + p \max\{(\gamma+\mu_x) D_\gX^{p-1}, (\gamma+\mu_y) D_\gY^{p-1}\})$ be 
the gradient Lipschitz constant of $h_\epsilon(\vx,\vy; \bar \vx, \bar \vy)$ 
given in Lemma \ref{lem:prop-funcs} (item 3).
Then Lemma \ref{lem:gd-make-g-small} shows that the algorithm outputs $\hat \vz = (\hat \vx, \hat \vy) \in \gZ$ and $\hat \vc =(\hat \vu, \hat \vv) \in \gN_\gZ(\hat \vz)$ such that
\begin{equation} \label{eq:grad-small-F-gamma}
   \left \Vert 
   \begin{matrix}
       \nabla_x h_\epsilon(\hat \vx,\hat \vy;\bar \vx, \bar \vy) + \hat \vu \\
       -\nabla_y h_\epsilon(\hat \vx,\hat \vy;\bar \vx, \bar \vy) + \hat \vv
   \end{matrix}
   \right \Vert =   
    \Vert \mF^{\gamma}(\hat \vz) + \hat \vc \Vert \le 6 \tilde L_1^\gamma \Vert \vz^{(S_3)} - \vz^*(\bar \vz) \Vert \le 6 \tilde L_1^\gamma \zeta_3.
\end{equation}
Our next goal is to show that Eq. (\ref{eq:grad-small-F-gamma}) implies an approximate proximal oracle of $\Psi(\vy ;\bar \vx)$ at the query point $\bar \vy \in \gY$ if $\zeta_3$ is
sufficiently small. By the form of $g_\epsilon(\vx,\vy; \bar \vx)$ and $h_\epsilon(\vx,\vy; \bar \vx, \bar \vy)$, we know that $\vx^*(\vy; \bar\vx) = \vx^*(\vy; \bar\vx , \bar \vy)$,
where   $ \vx^*(\vy; \bar\vx) = \arg \min_{\vx \in \gX} g_\epsilon(\vx,\vy; \bar \vx)$ and $ \vx^*(\vy; \bar\vx , \bar \vy) = \arg \min_{\vx \in \gX} h_\epsilon(\vx,\vy; \bar \vx, \bar \vy)$. 
Hence, we can use the second inequality of Eq. (\ref{eq:cont-mid}) to obtain
\begin{equation} \label{eq:cont-inner}
    \Vert  \vx^*(\vy_1; \bar\vx , \bar \vy) - \vx^*(\vy_2; \bar\vx , \bar \vy) \Vert^p \le \frac{(p+1) 2^{p-2} L_1}{\gamma + \mu_x} 
    \Vert \vy_1 - \vy_2 \Vert, \quad \forall \vy_1, \vy_2 \in \gY,
\end{equation}
Let $\lambda = \gamma \Vert \hat \vy - \bar \vy \Vert$. We then have
\begin{align} \label{eq:Psi-gnorm}
\begin{split}
      &\Vert \nabla \Psi(\hat \vy; \bar \vx) + \lambda (\tilde \vx - \bar \vx) + \hat \vv \Vert =
    \Vert \nabla_y h_\epsilon(\vx^*(\hat \vy;\bar \vx, \bar \vy), \hat \vy ;\bar \vx, \bar \vy) + \hat \vv \Vert \\
    \le& \Vert \nabla_y h_\epsilon(\hat \vx, \hat \vy; \bar \vx, \bar \vy) +  \hat \vv \Vert + \tilde L_1^\gamma \left(\Vert \hat \vx - \vx^*(\bar \vx, \bar \vy) \Vert  + \Vert \vx^*(\bar \vx, \bar \vy) - \vx^*(\hat \vy;\bar \vx, \bar \vy) \Vert \right) \\ 
    \le& \Vert \nabla_y h_\epsilon(\hat \vx, \hat \vy; \bar \vx, \bar \vy) +  \hat \vv \Vert + \tilde L_1^\gamma \left( \Vert \hat \vx - \vx^*(\bar \vx, \bar \vy) \Vert
    + \left( \frac{(p+1) 2^{p-2} L_1}{\gamma + \mu_x} \Vert \hat \vy - \vy^*(\bar \vx, \bar \vy) \Vert \right)^{1/p} \right),
\end{split}
\end{align}
where we use Eq. (\ref{eq:cont-inner}) and $ \vx^*( \vy^*(\bar \vx, \bar \vy); \bar \vx, \bar \vy) = \vy^*(\bar \vx, \bar \vy) ) $ in the last step. We want the right-hand side of the above inequality to be smaller than $\delta_2$, such that the sub-routine returns a $(\delta_2,\gamma)$ proximal oracle of $\Psi(\vy;\bar \vx)$ by Definition \ref{dfn:inexact-MS-oracle}. Since we have
already shown that the first term can be arbitrarily close to zero if $\zeta_3 \rightarrow 0$, we only need to handle the last two terms involving $\Vert \hat \vx - \vx^*(\bar \vx, \bar \vy) \Vert$ and $\Vert \hat \vy - \vy^*(\bar \vx, \bar \vy) \Vert$, which by Corollary~\ref{cor:U-monotone} can both be upper-bounded by 
\begin{equation}  \label{eq:Psi-zdist}
 \max \{\Vert \hat \vx - \vx^*(\bar \vx, \bar \vy) \Vert, \Vert \hat \vy - \vy^*(\bar \vx, \bar \vy) \Vert  \}  \le \Vert \hat \vz - \vz^*(\bar \vz) \Vert \le \left( \frac{(p+1) 2^{p-2}}{\gamma} \Vert \mF^\gamma(\hat \vz; \bar \vz) + \hat \vc \Vert \right)^{1/p}.
\end{equation}
Now, combining Eq. (\ref{eq:grad-small-F-gamma}) and (\ref{eq:Psi-zdist}),
we know that there exists some $\zeta_3 = 1/ {\rm poly}(\tilde L_1^\gamma, \nicefrac{1}{\gamma}, \nicefrac{1}{\delta_2})$ such that the right-hand side in Eq. (\ref{eq:Psi-gnorm}) is smaller than $\delta_2$  and thus
fulfills the goal of the inner loop.

\noindent \textbf{Total complexity.} 
Finally, the total complexity claimed in Eq. (\ref{eq:main-complexity}) can be easily obtained by multiplying Eq. (\ref{eq:compl-outer}), Eq. (\ref{eq:compl-mid}), and Eq. (\ref{eq:compl-inner}) under the setting $\gamma=L_p$.  
\end{proof}

\section{Proof of Lemma \ref{lem:pro-hard-ins}} \label{apx:lb-proof}

\begin{proof}
We prove the lemma by verifying each property one by one.

\begin{enumerate}
    \item  It is apparent that both the sets $\bar \gX$ and $\bar \gY$ are convex. We only need to show that $\bar f(\vx,\vy)$ is convex-concave in $\bar \gX \times \bar \gY$. Let 
    \begin{equation} \label{eq:B}
        \mB = 
        \begin{bmatrix}
            -1 & \\
            1 & -1 &   & \\
             & \ddots & \ddots & \\
             & & 1 & -1
        \end{bmatrix}
    \end{equation}
    and $\ve_1 = (1,0,\cdots,0)^\top$. Let $\bar g(\vx,\vy) = \sum_{i=1}^{T+1} \vy_{[i]} \vx_{[i]}^p$. This function is convex-concave on the domain $\sR^{T+1}_+ \times \sR^{T+1}_+$. Since $\bar f(\vx,\vy) = (L_p / (2^{p+1} p!)) \bar g(\mB \vx + \ve_1, \vy)$ and the affine mapping is a convexity–preserving operation, we know that $\bar f(\vx,\vy)$ is also convex-concave on the domain $ \{\vx: \mB \vx + \ve_1 \in \sR^{T+1}_+ \} \times \sR^{T+1}_+$. Next, we show that $ \bar f(\vx,\vy) $ has $L_p$-Lipschitz continuous $p$th-order derivatives.
    It is easy to show that
    \begin{equation*}
        \Vert \nabla^p \bar g(\vx,\vy) - \nabla^p \bar g(\vx',\vy') \Vert \le 2p! (\Vert \vx - \vx' \Vert + \Vert \vy - \vy' \Vert), \quad \forall \vx,\vx,\vy,\vy' \in \sR^{T+1}.
    \end{equation*}
    Since the matrix $\mB$ in Eq. (\ref{eq:B}) satisfies $\Vert \mB \Vert \le 2$, we further have that
    \begin{equation*}
        \Vert \nabla^p \bar f(\vx,\vy) - \nabla^p \bar f(\vx',\vy') \Vert \le L_p \Vert \vx - \vx' \Vert + \Vert \vy - \vy' \Vert), 
        \quad \forall \vx,\vx,\vy,\vy' \in \sR^{T+1}.
    \end{equation*}
    
    \item 
    We prove the property in Eq. (\ref{eq:zero-respect}) by induction. We let
    \begin{equation*}
        \sR_t^{T+1} = \{ \vx \in \sR^{T+1}: \vx_{[i]} =0, ~~ i = t+1,\cdots,T+1 \}.
    \end{equation*}
    Suppose $( \vx_t,\vy_t) \in \sR^{T+1}_t \times \sR^{T+1}_t
    $, our goal is to show that $(\vx_{t+1},\vy_{t+1}) \in (\sR^{T+1}_{t+1} \cap \bar \gX) \times (\sR^{T+1}_{t+1} \cap \bar \gY)
    $ when the algorithm lies in the class of Definition \ref{dfn:ZR-alg}. Note that our hard instance in Eq. (\ref{eq:our-hard-ins}) is constructed such that
    for any $q \in \{1,\cdots,p \}$ we have 
    \begin{equation*}
        \bar \mF^{q-1} (\bar \vz_t)[\vh]^q \in \sR^{T+1}_{t+1} \times  \sR^{T+1}_{t+1}, \quad \forall \vh \in \sR^{T+1} \times \sR^{T+1},
    \end{equation*}
Next, we consider the different cases when the algorithm takes different options. 

\vspace{4mm}

\textbf{Option (A).} We have $\vy_{t+1} = \vy_t$ and $\vx_{t+1}$ satisfies that
 \begin{equation*}
    \left \langle \sum_{k=1}^q \frac{1}{k!} (\nabla_x )^k \bar f(\bar \vx_t, \bar \vy_t) + \frac{M_t}{q!} \Vert \vx_{t+1} - \bar \vx_t \Vert (\vx_{t+1} - \bar \vx_t), \vx - \vx_{t+1}     \right \rangle \ge 0, ~~ \forall \vx \in \bar \gX.
\end{equation*}
In this case, we can prove that $\vx_{t+1} \in \sR_{t}^{T+1}$. Otherwise, $\vx_{t+1} \notin \sR_{t}^{T+1}$, and there must exist an index $j \in \{t+1,\cdots,T+1\}$ such that $(\vx_{t+1})_{[j]} > 0$. Then, letting $\vx = \vx_{t+1} - (\vx_{t+1})_{[j]} \ve_j$ in the above inequality leads to a contradiction.

\vspace{4mm}

\textbf{Option (B).} We have $\vx_{t+1} = \vx_t$ and $\vy_{t+1}$ satisfies that
\begin{equation*} 
    \left \langle \sum_{k=1}^q \frac{1}{k!} (-1)^k (\nabla_y )^k \bar f(\bar \vx_t, \bar \vy_t) + \frac{M_t}{q!} \Vert \vy_{t+1} - \bar \vy_t \Vert (\vy_{t+1} - \bar \vy_t), \vy - \vy_{t+1}     \right \rangle \ge 0, ~~ \forall \vy \in \bar \gY.
\end{equation*}
In this case, we can prove that $\vy_{t+1} \in \sR^{T+1}_{t+1}$. Otherwise $\vy_{t+1} \notin \sR^{T+1}_{t+1}$, there must exist an index $j \in \{ t+2,\cdots,T+1 \}$ such that $(\vy_{t+1})_{[j]} >0$. Define $j_m = \{ \max j: (\vy_{t+1})_{[j]} >0, ~j = t+2,\cdots T+1\}$. Then letting $\vy = \vy_{t+1} -(\vy_{t+1})_{[j_m]} \ve_{j_m} $ in the above inequality also leads to a contradiction.

\vspace{4mm}

\textbf{Option (C).} We have $\vz_{t+1} = (\vx_{t+1}, \vy_{t+1})$ satisfies that
\begin{equation*}
    \left \langle \sum_{k=0}^{q-1} \frac{1}{k!} \nabla^k \bar \mF(\bar \vz_t) + \frac{M_t}{q!} \Vert \vz_{t+1} - \bar \vz_t \Vert (\vz_{t+1} - \bar \vz_t), \vz - \vz_{t+1}     \right \rangle \ge 0, ~~ \forall \vz \in \bar \gZ.
\end{equation*}
We can mimic the analysis in option (A) for variable $\vx$ and the analysis in option ((B) for variable $\vy$ separately to show that $(\vx_{t+1},\vy_{t+1}) \in \sR^{T+1}_{t+1} \times \sR^{T+1}_{t+1} $.

Combining all the above three cases proves the induction that $(\vx_{t+1},\vy_{t+1}) \in (\sR^{T+1}_{t+1} \cap \gX) \times (\sR^{T+1}_{t+1} \cap \gY)$. And the induction base is trivially satisfied as $(\vx_0,\vy_0) = (\vzero,\vzero)$.
    
\item For any $\vz = (\vx,\vy) \in \bar \gX \times \bar \gY$  with $ \vx_{[T+1]} = \vy_{[T+1]} = 0 $, we can use Lemma \ref{lem:solu-concept} to give a lower bound of the tangent residual via duality gap as follows:
\begin{align*}
   D_\gZ r_{\bar \mF}^{\rm tan}(\vz) & \ge \frac{L_p}{2^{p+1} p!} \left(\max_{\vy' \in \gY} \bar f(\vx,\vy') - \min_{\vx' \in \gX} \bar f(\vx',\vy) \right) \\
   &\ge \frac{L_p}{2^{p+1} p!}  \left( \bar f(\vx, \vone) - \bar f(\vone, \vy) \right) = \frac{L_p}{2^{p+1} p!} \bar f(\vx_T, \vone) \\
   &= \frac{L_p}{2^{p+1} p!} \left( \left(1- \vx_{[1]} \right)^p + \sum_{i=1}^{T-1} \left( \vx_{[i]} -\vx_{[i+1]}  \right)^p + \left(\vx_{[T]} \right)^p \right) \\
   &\ge \frac{L_p (T+1)}{2^{p+1} p!}  \left( \frac{1}{T+1} \left( 1- \vx_{[1]} +  \sum_{i=1}^{T-1}\left( \vx_{[i]} -\vx_{[i+1]}  \right) + \vx_{[T]}  \right) \right)^p \\
   &= \frac{L_p}{2^{p+1} p! (T+1)^{p-1}} = \frac{L_p \left(D_{\bar \gZ} / \sqrt{2} \right)^{p+1}}{2^{p+1} p! (T+1)^{(3p-1)/2}},
\end{align*}
where the last inequality uses the convexity of $x^p$ on $\sR_+$ and the last equality uses the fact that $D_{\bar \gX} = D_{\bar \gY} = \sqrt{T+1}$ and $D_{\bar \gZ} = \sqrt{2(T+1)}$.
\end{enumerate}
 
\end{proof}

\end{document}

%% file: math.tex
\usepackage{amsmath,amsfonts,bm,amssymb}

\def\eqref#1{equation~\ref{#1}}

\def\1{\bm{1}}

\def\vzero{{\bm{0}}}
\def\vone{{\bm{1}}}

\def\vc{{\bm{c}}}

\def\ve{{\bm{e}}}

\def\vg{{\bm{g}}}
\def\vh{{\bm{h}}}

\def\vu{{\bm{u}}}
\def\vv{{\bm{v}}}

\def\vx{{\bm{x}}}
\def\vy{{\bm{y}}}
\def\vz{{\bm{z}}}

\def\mA{{\bm{A}}}
\def\mB{{\bm{B}}}

\def\mF{{\bm{F}}}

\def\mM{{\bm{M}}}

\def\mT{{\bm{T}}}

\DeclareMathAlphabet{\mathsfit}{\encodingdefault}{\sfdefault}{m}{sl}
\SetMathAlphabet{\mathsfit}{bold}{\encodingdefault}{\sfdefault}{bx}{n}

\def\gA{{\mathcal{A}}}

\def\gI{{\mathcal{I}}}

\def\gM{{\mathcal{M}}}
\def\gN{{\mathcal{N}}}
\def\gO{{\mathcal{O}}}

\def\gT{{\mathcal{T}}}

\def\gX{{\mathcal{X}}}
\def\gY{{\mathcal{Y}}}
\def\gZ{{\mathcal{Z}}}

\def\sN{{\mathbb{N}}}

\def\sR{{\mathbb{R}}}

\usepackage{xcolor}
\usepackage{nicefrac}
\usepackage{cancel}
\newtheorem{theorem}{Theorem}[section]
\newtheorem{dfn}{Definition}[section]

\newtheorem{lemma}{Lemma}[section]
\newtheorem{asm}{Assumption}[section]
\newtheorem{remark}{Remark}[section]
\newtheorem{cor}{Corollary}[section]